\let\oldtocsection=\tocsection
\let\oldtocsubsection=\tocsubsection
\renewcommand{\tocsection}[2]{\hspace{0em}\oldtocsection{#1}{#2}}
\renewcommand{\tocsubsection}[2]{\hspace{1em}\oldtocsubsection{#1}{#2}}
\newcommand{\C}{\mathbb{C}}
\newcommand{\R}{\mathbb{R}}
\newcommand{\Q}{\mathbb{Q}}
\newcommand{\Z}{\mathbb{Z}}
\newcommand{\CP}{\mathbb{C}P}
\newcommand{\PxP}{\mathbb{C}P^1 \times \mathbb{C}P^1}
\newcommand{\Blk}{\mathbb{C}P^2\# k\overline{\mathbb{C}P^2}}
\newcommand{\BlI}{\mathbb{C}P^2\# \overline{\mathbb{C}P^2}}
\newcommand{\BlII}{\mathbb{C}P^2\# 2\overline{\mathbb{C}P^2}}
\newcommand{\BlIII}{\mathbb{C}P^2\# 3\overline{\mathbb{C}P^2}}
\newcommand{\BlIV}{\mathbb{C}P^2\# 4\overline{\mathbb{C}P^2}}
\newcommand{\BlV}{\mathbb{C}P^2\# 5\overline{\mathbb{C}P^2}}
\newcommand{\BlVI}{\mathbb{C}P^2\# 6\overline{\mathbb{C}P^2}}
\newcommand{\BlVII}{\mathbb{C}P^2\# 7\overline{\mathbb{C}P^2}}
\newcommand{\BlVIII}{\mathbb{C}P^2\# 8\overline{\mathbb{C}P^2}}
\newcommand{\kabc}{k_1a^2,k_2b^2,k_3c^2}
\newcommand{\Tpqr}{\Theta^{n_1,n_2,n_3}_{p,q,r}}
\newcommand{\del}{\partial}
\newcommand{\bu}{\textbf{u}}
\newcommand{\bv}{\textbf{v}}
\newcommand{\bw}{\textbf{w}}
\newcommand{\subscript}[2]{$#1 _ #2$}
\numberwithin{equation}{section}
\newtheorem{thm}{Theorem}[section]
\newtheorem*{thm*}{Theorem}
\newtheorem{prp}[thm]{Proposition}
\newtheorem{lem}[thm]{Lemma}
\newtheorem{cor}[thm]{Corollary}
\newtheorem{cnj}[thm]{Conjecture}
\theoremstyle{definition}
\newtheorem{dfn}[thm]{Definition}
\newtheorem{rmk}[thm]{Remark}
\newtheorem{qu}[thm]{Question}
\newtheorem{clm}[thm]{Claim}
\numberwithin{equation}{section}
\theoremstyle:=definition,remark,plain,TheoremNum\do{%
\expandafter\g@addto@macro\csname th@\theoremstyle\endcsname{%
\addtolength\thm@preskip\parskip 
}%
} 
\title{Infinitely many monotone Lagrangian tori in del Pezzo surfaces}
\author{Renato Vianna}
\thanks{The author is supported by the Herschel Smith postdoctoral fellowship from
the University of Cambridge.}
\address{
Renato Vianna\\
Centre for Mathematical Sciences\\
University of Cambridge\\
Cambridge, CB3 0WB\\
United Kingdom}
\email{r.vianna@dpmms.cam.ac.uk}
\begin{document}
  
  \begin{abstract} 
   
We construct almost toric fibrations (ATFs) on all del Pezzo surfaces, endowed
with a monotone symplectic form. Except for $\BlI$, $\BlII$, we are able to get
almost toric base diagrams (ATBDs) of triangular shape and prove the existence
of infinitely many symplectomorphism (in particular Hamiltonian isotopy) classes
of monotone Lagrangian tori in $\Blk$ for $k=0,3,4,5,6,7,8$. We name these tori
$\Theta^{n_1,n_2,n_3}_{p,q,r}$. Using the work of Karpov-Nogin, we are able to
classify all ATBDs of triangular shape. We are able to prove that $\BlI$ also
have infinitely many monotone Lagrangian tori up to symplectomorphism and we
conjecture that the same holds for $\BlII$. Finally, the Lagrangian tori
$\Theta^{n_1,n_2,n_3}_{p,q,r} \subset X$ can be seen as monotone fibers of ATFs,
such that, over its edge lies a fixed anticanonical symplectic torus $\Sigma$. We
argue that $\Theta^{n_1,n_2,n_3}_{p,q,r}$ give rise to infinitely many exact
Lagrangian tori in $X \setminus \Sigma$, even after attaching the positive end
of a symplectization to $\del(X \setminus \Sigma)$.

\end{abstract}

\maketitle

\vspace{-0.5cm}

\tableofcontents

\newpage

\section{Introduction} \label{sec: Intro}

We say that two Lagrangians submanifolds of a symplectic manifold $X$ belong to
the same {\it symplectomorphism class} if there is a symplectomorphism of $X$ sending
one Lagrangian to the other. Similar for {\it Hamiltonian isotopy class}. 

In \cite{Vi14}, it is explained how to get infinitely many symplectomorphism classes of
monotone Lagrangian tori in $\CP^2$. The idea is to construct different almost
toric fibrations (denoted from here by ATF) \cite{Sy03,SyLe10} of $\CP^2$. The
procedure starts by applying \emph{nodal trades} \cite{Sy03,SyLe10} to the
corners of the moment polytope and subsequently applying a series of \emph{nodal
slides} \cite{Sy03,SyLe10} through the monotone fiber of the ATF. In that way we
obtain infinitely many monotone Lagrangian tori $T(a^2,b^2,c^2)$ as the central
fibre of some almost toric base diagram (denoted from here by ATBD) describing
an ATF. They are indexed by Markov triples $(a,b,c)$, i.e., positive integer
solutions of $a^2 + b^2 + c^2 = 3abc$. We refer the reader to \cite{Vi14} for a
detailed account.

It is clear that the technique to construct ATFs and potentially get infinitely
many symplectomorphism classes of monotone Lagrangian tori would also apply for any
monotone toric symplectic 4-manifolds. In this paper we show that we can
actually construct ATFs in all monotone del Pezzo surfaces,i.e. , $\PxP$ and
$\Blk$, $0 \le k \le 8$. 

\begin{figure}[h!]   
  
\begin{center} 
  
\begin{minipage}{0.4\textwidth}
\vspace{0.6cm}
\centerline{\includegraphics[scale=0.5]{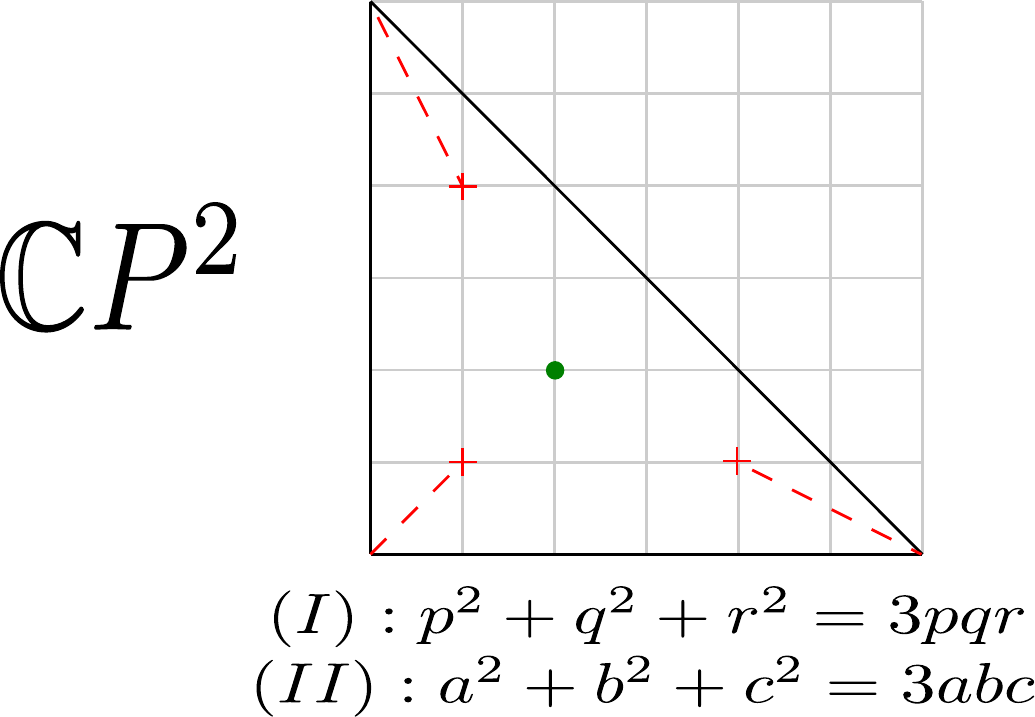}}
\vspace{1cm}
\centerline{\includegraphics[scale=0.5]{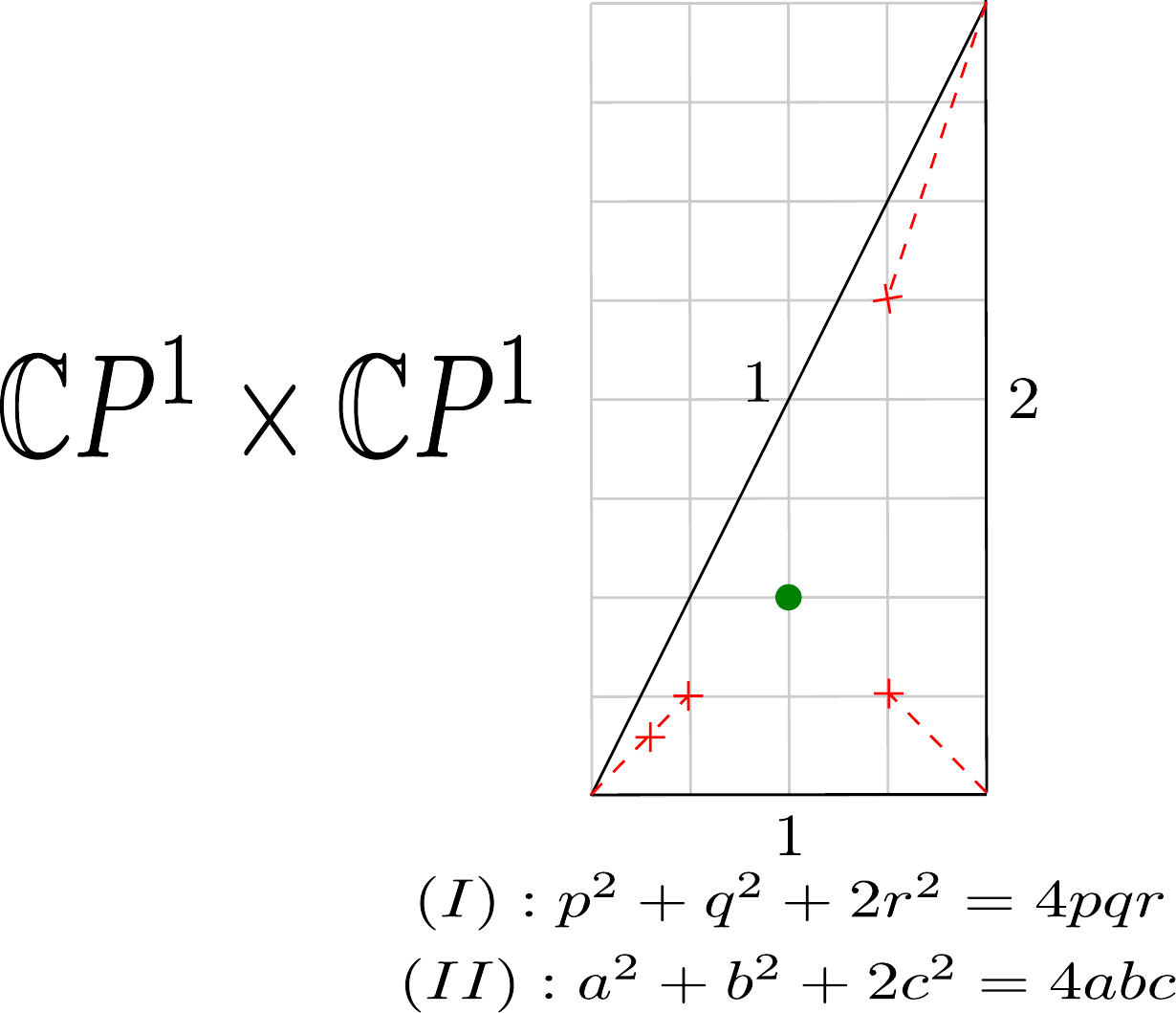}} 
\end{minipage}
\hspace{1cm}
\begin{minipage}{0.4\textwidth}
\centerline{\includegraphics[scale=0.5]{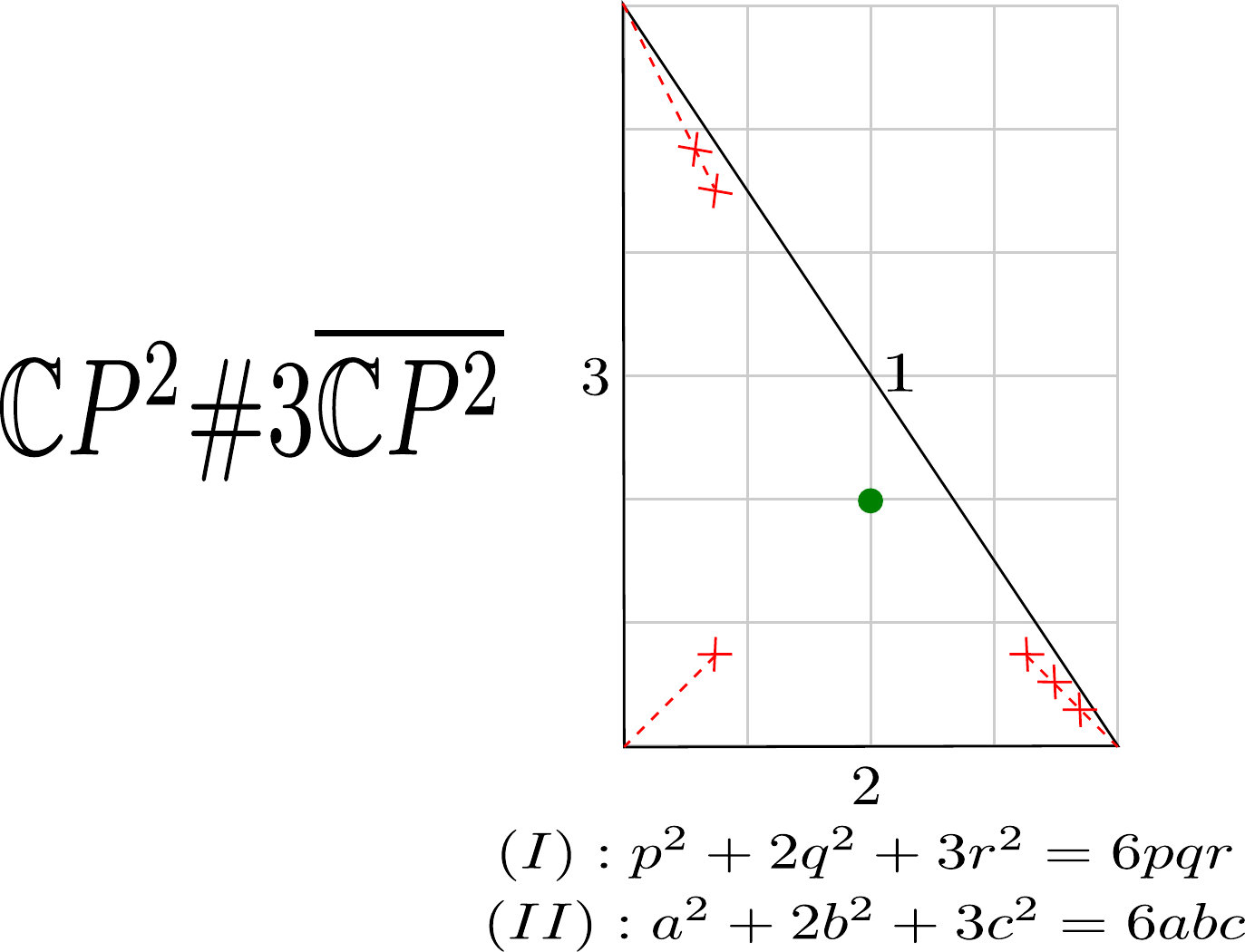}}
\vspace{0.8cm}
\centerline{\includegraphics[scale=0.5]{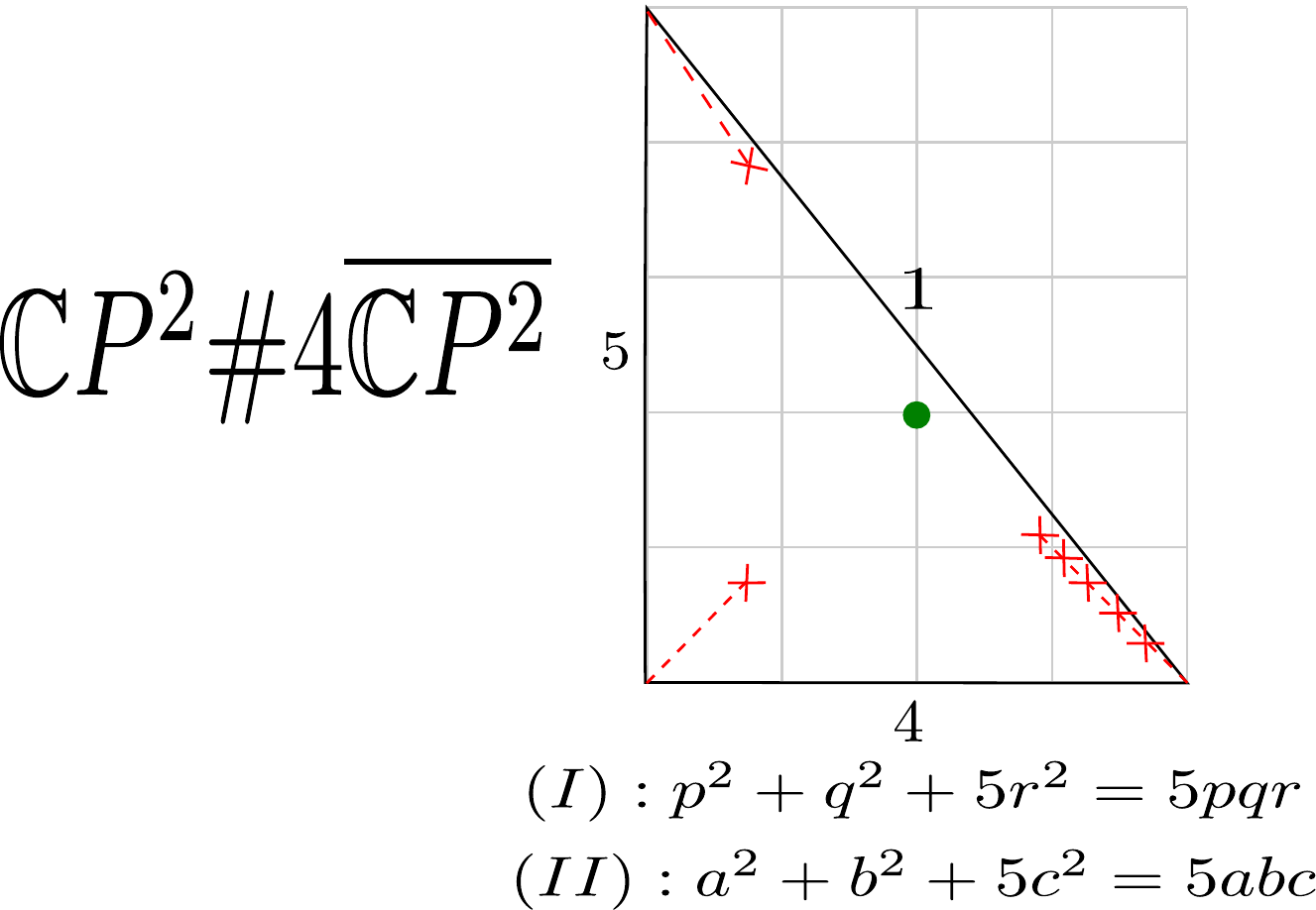}}
\end{minipage}

\caption{ATBDs of $\CP^2$, $\PxP$, $\BlII$, $\BlIV$ of triangular shape.
Near each base diagram is the corresponding Markov type I and II equations
(Definition \ref{dfn: ATBDequation}).}
\label{fig: triangshape1} 

\end{center} 
\end{figure} 

\begin{figure}[h!]   
  
\begin{center}

\begin{minipage}{0.3\textwidth}
\centerline{\includegraphics[scale=0.45]{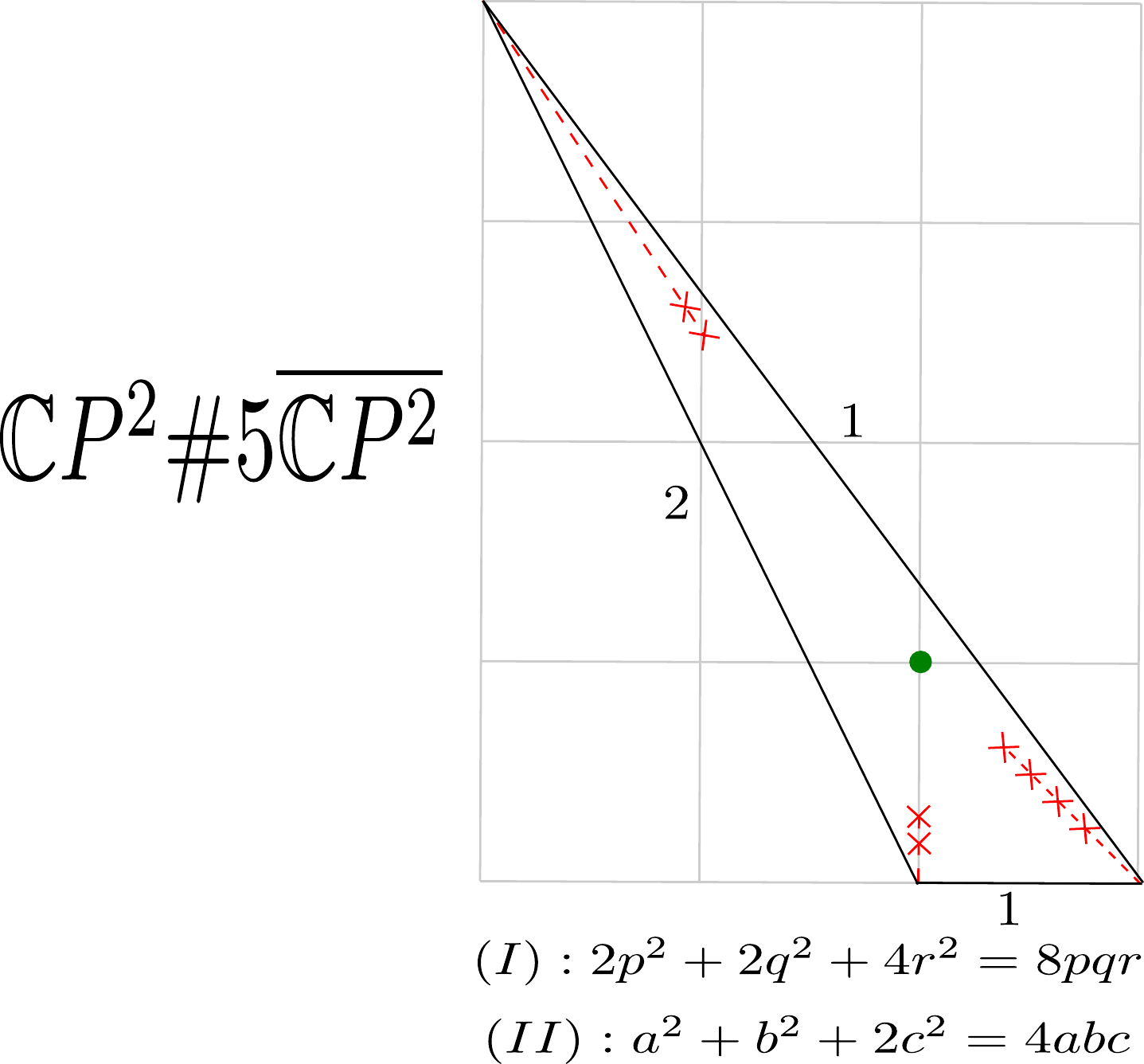}}
\end{minipage}
\hspace{1.5cm}
\begin{minipage}{0.4\textwidth}
\centerline{\includegraphics[scale=0.45]{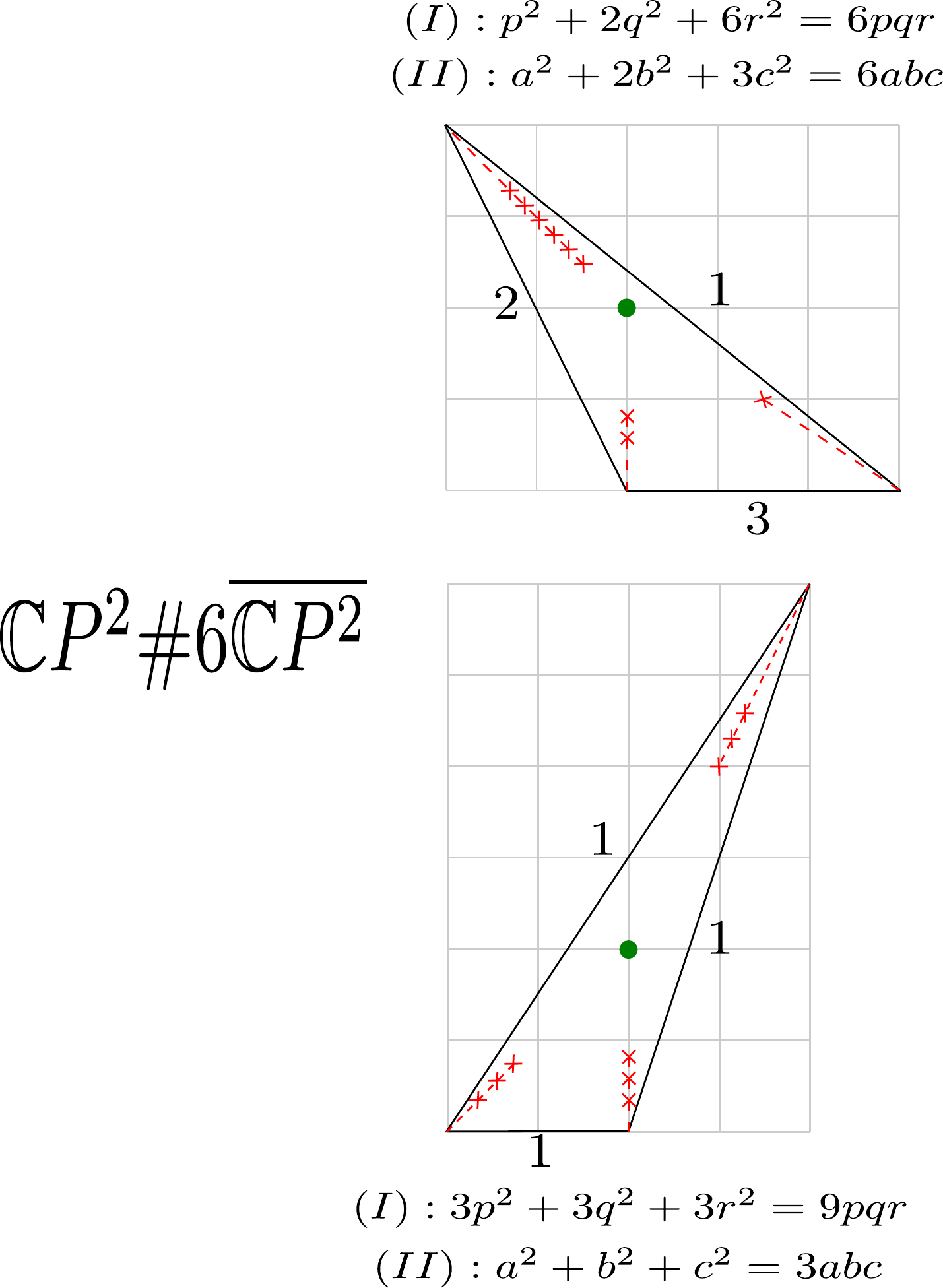}}
\end{minipage}

\caption{ATBDs of $\BlV$, $\BlVI$ of triangular shape.
Below each base diagram is the corresponding Markov type I and II equations.}
\label{fig: triangshape2} 

\end{center} 
\end{figure}

\begin{figure}[h!]   
  
\begin{center}

\centerline{\includegraphics[scale=0.5]{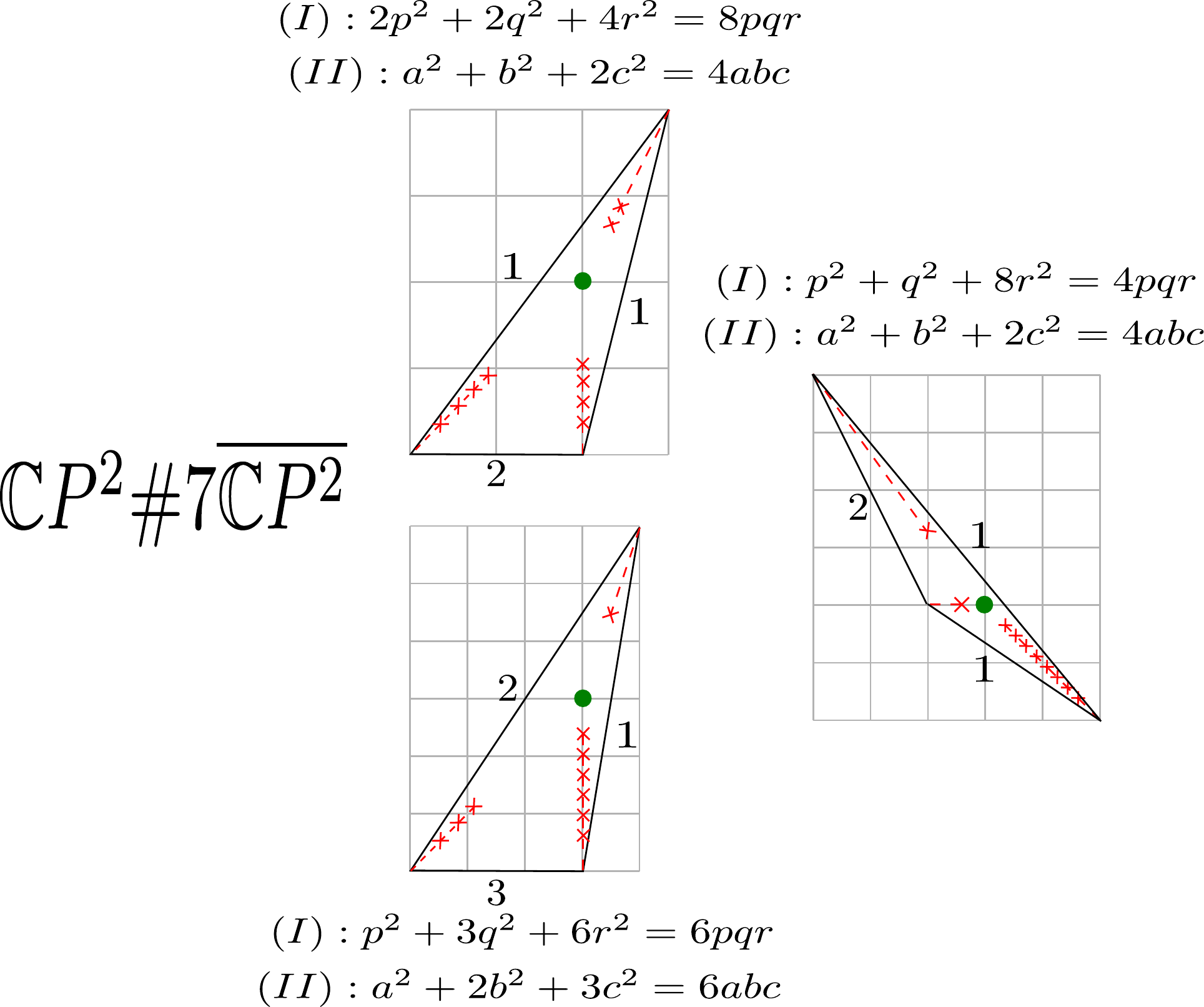}}

\caption{ATBDs of $\BlVII$ of triangular shape.
Close to each base diagram is the corresponding Markov type I and II equations.}
\label{fig: triangshape3} 

\end{center} 
\end{figure}

\begin{figure}[h!]   
  
\begin{center}

\centerline{\includegraphics[scale=0.5]{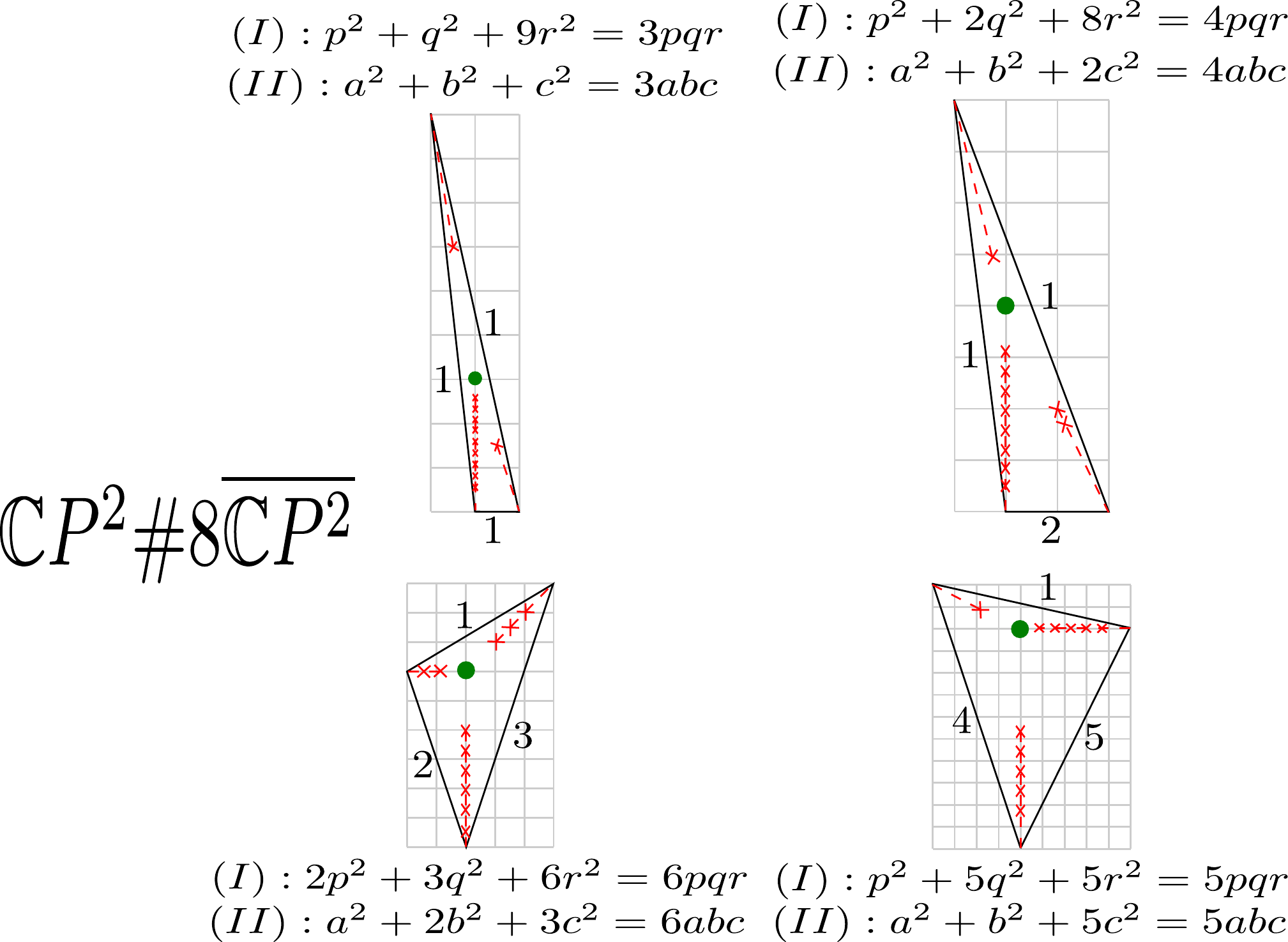}}

\caption{ATBDs of $\BlVIII$ of triangular shape.
Close to each base diagram is the corresponding Markov type I and II equations.}
\label{fig: triangshape4} 

\end{center} 
\end{figure}

In \cite{OhtaOno96,OhtaOno97}, Ohta-Ono proved that the diffeomorphism type of
any closed monotone symplectic 4-manifold is $\PxP$ and $\Blk$, $0 \le k \le 8$,
based on the work of McDuff \cite{MD90} and Taubes \cite{Ta95,Ta96,Ta00Book}.
Also, in \cite{MD96}, McDuff showed that uniqueness of blowups (of given sizes)
for 4-manifolds of non-simple Seiberg-Witten type (which includes $\CP^2$).
Finally, from the uniqueness of the monotone symplectic form for $\CP^2$ and
$\PxP$ \cite{Gr85,MD90,Ta95,Ta00Book} (see also the excelent survey
\cite{Sa13Survey}), we get uniqueness of monotone symplectic structures on
$\PxP$ and $\Blk$, $0 \le k \le 8$. We refer the later as del Pezzo surfaces
thinking of them endowed with a monotone symplectic form.

The algebraic count of Maslov index 2 pseudo-holomorphic disks with boundary in
a monotone Lagrangian $L$ and relative homotopy class $\beta$ is an invariant of
the symplectomorphism class of $L$, first pointed out in \cite{ElPo93} (we
believe that the formal proof needs the work of \cite{La00}). To distinguish the
monotone Lagrangian tori we built in $\CP^2$, we used an invariant among
monotone Lagrangian $L$ in the same symplectomorphism class based on the above count
\cite{Vi14}. We named it the {\it boundary Maslov-2 convex hull} $\mho_L$
\cite[Section~4]{Vi14}, which is the convex hull in $H_1(L)$ of the set formed
by the boundary of Maslov index 2 classes in $\pi_2(X,L)$ that have non-zero
enumerative geometry. In other words, it is the convex hull for the Newton
polytope of the superpotential function - for definition of the superpotential
we refer the reader to \cite{Au07, Au09, FO3Book}.

For computing the above invariant, we employed the neck-stretching technique
\cite{EliGiHo10,CompSFT03} to get a degenerated limit of pseudo-holomorphic
disks with boundary in $T(a^2,b^2,c^2)$ inside the weighted projective space
$\CP(a^2,b^2,c^2)$. We then used positivity of intersection for orbifold disks
\cite{ChenRu02,Chen04} in the weighted projective space $\CP(a^2,b^2,c^2)$,
together with the computation of holomorphic disks away from the orbifold points
\cite{ChoPo14} to be able to compute $\mho_{T(a^2,b^2,c^2)}$, the boundary
Maslov-2 convex hull for $T(a^2,b^2,c^2)$. It follows that
$\mho_{T(a^2,b^2,c^2)}$ is incongruent (not related via $SL(2,\Z)$ upon a choice
of basis for the respectives 1st homotopy groups) to $\mho_{T(d^2,e^2,f^2)}$, if
$\{a,b,c\} \ne \{d,e,f\}$. 

The aim of this paper is to prove:

\begin{thm} \label{thm: main}
  There are infinitely many symplectomorphism classes of
  monotone Lagrangian tori inside:
\begin{enumerate}[label=(\alph*)]
   \item $\PxP$ and $\Blk$, $k= 0,3,4,5,6,7,8$; \label{thm: a}
   \item  $\BlI$;   \label{thm: b}
  \end{enumerate} 
\end{thm}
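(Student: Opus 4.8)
The plan is to reduce the general del Pezzo case to the mechanism already established for $\CP^2$ in \cite{Vi14}. For each surface in part \ref{thm: a}, I would construct an almost toric fibration whose base diagram is one of the triangular ATBDs displayed in Figures \ref{fig: triangshape1}--\ref{fig: triangshape4}. Starting from such a triangle I would perform a nodal trade at each of its three corners and then repeatedly apply nodal slides through the monotone fibre, exactly as in the $\CP^2$ construction; each admissible sequence of slides produces a new triangular ATBD whose central fibre is a monotone Lagrangian torus, which I denote $\Theta^{n_1,n_2,n_3}_{p,q,r}$. The data $(p,q,r)$ recording the weighted-projective singularities at the three corners is constrained by a Markov-type Diophantine relation (the ``type I and II'' equations attached to each figure), and I would verify that this relation admits infinitely many solutions, so that the construction yields infinitely many candidate tori for each fixed triangular family $(n_1,n_2,n_3)$.

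To show that infinitely many of these are pairwise non-symplectomorphic, I would compute the boundary Maslov-2 convex hull $\mho$ of $\Theta^{n_1,n_2,n_3}_{p,q,r}$ and argue that its congruence class determines the unordered triple $(p,q,r)$. Following the strategy of \cite{Vi14}, I would neck-stretch along the fixed anticanonical symplectic torus $\Sigma$ lying over the edge of the ATBD; in the limit the pseudo-holomorphic Maslov-2 disks with boundary on the torus degenerate into orbifold disks inside a weighted projective space $\CP(p,q,r)$ governed by the three corner singularities. Positivity of intersection for orbifold disks \cite{ChenRu02,Chen04}, together with the explicit count of disks away from the orbifold points \cite{ChoPo14}, would then pin down precisely which relative homotopy classes carry non-zero enumerative contributions, and hence the vertices of $\mho$. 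Since $\mho$ is an invariant of the symplectomorphism class, and the resulting polytopes are pairwise incongruent (not related by $SL(2,\Z)$) for distinct unordered triples, the infinitely many solutions of the Markov-type equation produce infinitely many symplectomorphism classes, proving part \ref{thm: a}.

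For part \ref{thm: b}, the surface $\BlI$ does not admit a triangular ATBD, so I would instead exhibit an almost toric fibration of trapezoidal shape on which the same nodal-trade and nodal-slide procedure still generates an infinite family of distinct monotone fibres. The distinguishing step proceeds identically: neck-stretch along a fixed anticanonical torus and read off $\mho$ from the appropriate degenerate weighted-projective limit, using that $\mho$ coincides with the Newton polytope of the superpotential and is therefore preserved under symplectomorphism.

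The main obstacle I expect is the disk-counting computation in the degenerate limit for the larger blow-ups. Unlike $\CP^2$, these surfaces carry additional nodes coming from the extra blown-up points, and these persist through the neck-stretching; one must verify that they do not contribute spurious Maslov-2 classes that would obscure the dependence of $\mho$ on $(p,q,r)$. Establishing that the only enumeratively relevant disks are those controlled by the three corner singularities --- equivalently, that the relevant part of the limit is effectively the weighted projective space $\CP(p,q,r)$ --- is the technical heart of the argument. Once this is secured, the incongruence of the convex hulls, and hence the infinitude of symplectomorphism classes, follows by the same combinatorial reasoning as in the $\CP^2$ case.
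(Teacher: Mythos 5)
Your treatment of part \ref{thm: a} is essentially the paper's argument: triangular ATBDs, tori indexed by Markov-type triples generated by mutation, and the boundary Maslov-2 convex hull computed by neck-stretching into the limit orbifold using positivity of intersection for orbifold disks and the Cho--Poddar count away from the orbifold points. Two caveats. First, producing the triangular ATBDs for $k\ge 4$ is not just ``nodal trade at each corner then slide'': there is no monotone toric structure on $\Blk$ for $k\ge 4$, so the paper must first mutate to create room for a blowup of the monotone size, and for $k=7,8$ it needs a genuinely new operation, the \emph{almost toric blowup} at a point over an edge (a rank~1 singularity), developed in Section \ref{subsec: ATBlup}. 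Second, the degenerate limit is not $\CP(p,q,r)$ but the limit orbifold of the ATBD, whose corner orders are $n_1p^2, n_2q^2, n_3r^2$; the key point making positivity of intersection work is that its moment polytope is a \emph{triangle}, so $H\cdot H>0$ and every boundary divisor has positive self-intersection (Claim \ref{clm: 2}).

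For part \ref{thm: b} your plan has a genuine gap. You assert that ``the distinguishing step proceeds identically'' for a trapezoidal ATBD, but it does not: the limit orbifold now has four boundary divisors $A,B,C,E$, and the paper computes $A\cdot A=\frac{a^2-b^2}{b^2}<0$. A component of the degenerate disk $u^\infty_+$ could be a positive multiple of $A$ and contribute \emph{negatively} to $A\cdot[u^\infty_+]$, so positivity of intersection fails for exactly one divisor and the full convex hull $\mho_{T_1(a,b)}$ cannot be pinned down by your argument. (This is precisely why the paper cannot handle $\BlII$ and leaves it as Conjecture \ref{cnj: k2}.) The paper's workaround is weaker but sufficient: positivity still holds against $E$, $B$, $C$ (Lemma \ref{lem: 1BlupIntersec}), which forces every Maslov-2 class with nonzero count to be of the form $\alpha+k(\varepsilon-\alpha)+l(\beta-\alpha)+m(\gamma-\alpha)$ with $k,l,m\ge 0$; hence $\del\alpha$ is a vertex of $\mho_{T_1(a,b)}$ with affine angle $b'=3a-b$, and an area bound on the number of components in class $A$ gives compactness of the hull. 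Infinitely many affine angles then yield infinitely many $SL(2,\Z)$-classes of convex hulls. Without identifying and circumventing the failure of positivity at the negative divisor, your part \ref{thm: b} argument does not close.
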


The proof of items \ref{thm: a}, \ref{thm: b} of the above theorem
differ a little. 

To prove Theorem \ref{thm: main} \ref{thm: a} we show that for $\PxP$ and
$\Blk$, $k= 0,3,4,5,6,7,8$ we can build almost toric base diagrams of triangular
shape described in Figures \ref{fig: triangshape1}, \ref{fig:
triangshape2}, \ref{fig: triangshape3}, \ref{fig: triangshape4}.      

We call a {\it mutation} in an ATBD with respect to a node $s$ if we apply a
{\it nodal slide operation} \cite{Sy03,SyLe10} (we always slide the node to pass
through the monotone fiber) together with a {\it transferring the cut operation}
\cite{Vi14}, with respect to $s$. 

The affine lengths of the edges of the ATBDs depicted in Figures \ref{fig:
triangshape1}-\ref{fig: triangshape4} are related to solutions of Markov type II
equations of the form

\begin{equation*} 
k_1a^2 + k_2b^2 + k_3c^2 = Kk_1k_2k_3abc. \ \ \eqref{eq: Markovtype}
\end{equation*}

We can apply a mutation $(a,b,c) \to (a' = Kk_2k_3bc - a, b, c)$ to obtain a new
solution of the same Markov type II equation. 

\begin{figure}[h!]  
\begin{center}
  
\centerline{\includegraphics[scale=0.57]{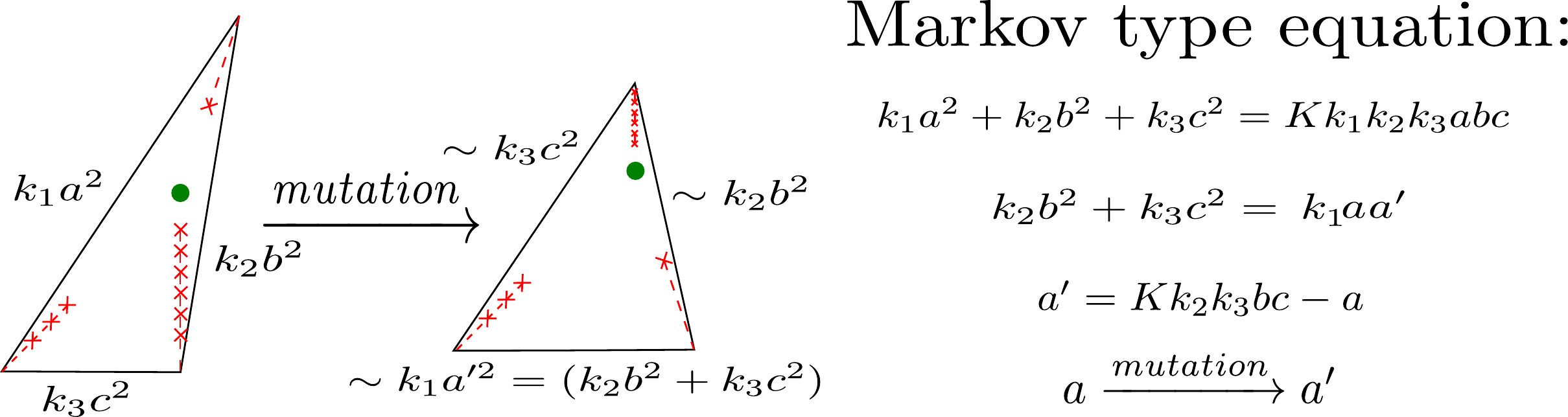}} 
  
\caption{Mutation on ATBD of triangular shape corresponds to mutation
on Markov type II triples.}
\label{fig: LemMut}   
\end{center}  
\end{figure}

Suppose we have an ATBD related to the Markov type II equation \eqref{eq:
Markovtype}, for some $K,k_1,k_2,k_3$. We prove on Section \ref{sec: Mutation}
(Lemma \ref{lem: Mutation}) that a mutation on an ATBD with respect to all nodes
in the same cut corresponds to a mutation on the respective Markov type II
triple solution of \eqref{eq: Markovtype}, as ilustrated in Figure \ref{fig:
LemMut}.

But the affine lengths do not determine the ATBDs of triangular shape, see
Figure \ref{fig: triangshape3}. What does is the node type (Definition \ref{dfn:
nodetype}). An ATBD of node-type $((n_1,p),(n_2,q),(n_3,r))$, must have
$(p,q,r)$ satisfying the Markov type I equation:

\begin{equation*}
 n_1p^2 + n_2q^2 + n_3r^2 = \sqrt{dn_1n_2n_3}pqr,  \ \ \eqref{eq: MarkovI}
  \end{equation*}

We name $\Tpqr$ the monotone fiber inside an ATBD of node-type
$((n_1,p),(n_2,q),(n_3,r))$ (we are assuming that the fiber lives on the
complement of all the cuts). We can aplly the same ideas of the proof of Theorem
1.1 of \cite{Vi14}[Section~4] to compute $\mho_{\Tpqr}$, the boundary Maslov-2
convex hull for each $\Tpqr$. 

Let's call the {\it limit orbifold} (Definition \ref{dfn: limorb}) of an ATBD
the orbifold described by the moment polytope given by deleting the cuts of the
ATBD (here we assume that the cuts are all in the eigendirection of the
monodromy around the respective node). Informally, we think that we nodal slide
all the nodes of the ATBD towards the edge, so in the limit the described by the
corresponding ATF is ``degenerating'' to the limit orbifold.
 
In the proof of Theorem \ref{thm: main} \ref{thm: a}, we look at degenerated
limit of pseudo-holomorphic disks with boundary in $T(\kabc)$, which lives in
the limit orbifold of the corresponding ATBD. One important aspect we use to
compute $\mho_{\Tpqr}$ is positivity of intersection between the degenerated
limit of pseudo-holomorphic curves in the limit orbifold and pre-image of the
edges of the limit orbifold's moment polytope. We may loose this property if the
moment polytope of the limit orbifold is not a triangle.

For the cases $\Blk$, $k= 1,2$, we can also construct infinitely many ATBDs,
each one describing an ATF with a monotone Lagrangian torus fiber, for instance
the ones in Figures \ref{fig: ATBD1Blup},\ref{fig: ATBD2Blup}. Let's name
$T_1(a,b)$ the monotone torus fiber of the ATF of $\BlI$ depicted in Figure
\ref{fig: ATBD1Blup}, similar $T_2(a,b) \subset \BlII$ depicted in Figure
\ref{fig: ATBD2Blup}.

\begin{figure}[h!]   
  
\begin{center} 
  
\centerline{\includegraphics[scale=0.4]{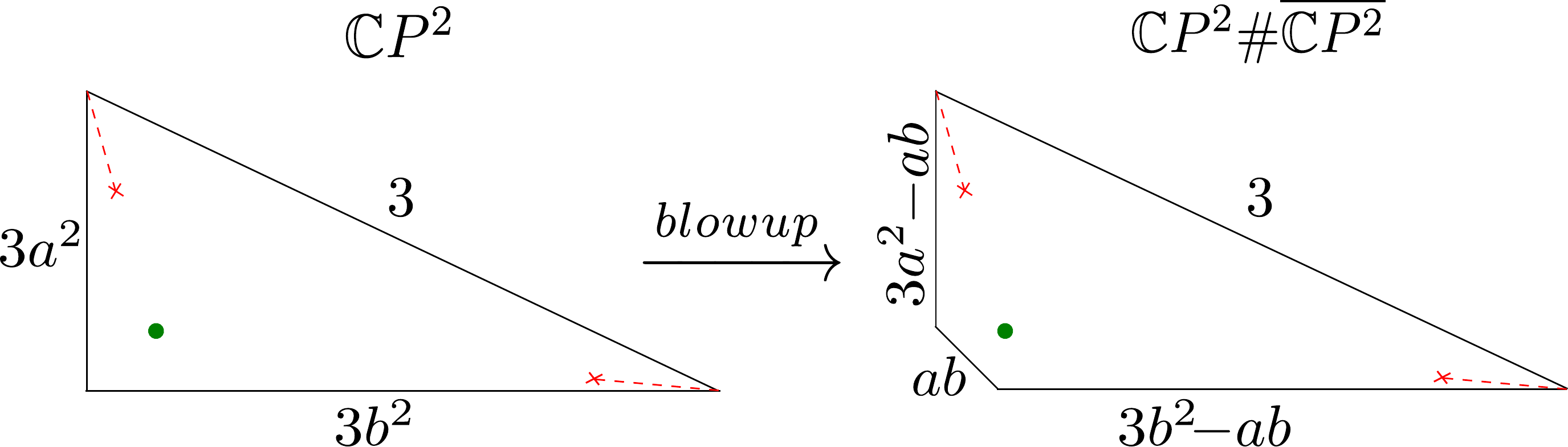}}

\caption{ATBD of $\BlI$ as a blowup of an ATBD of $\CP^2$ with affine 
lengths proportional to the squares of a Markov triple of the form $(1,a,b)$, with
$b > a$.}
\label{fig: ATBD1Blup} 

\end{center} 
\end{figure}

\begin{figure}[h!]   
  
\begin{center} 
  
\centerline{\includegraphics[scale=0.37]{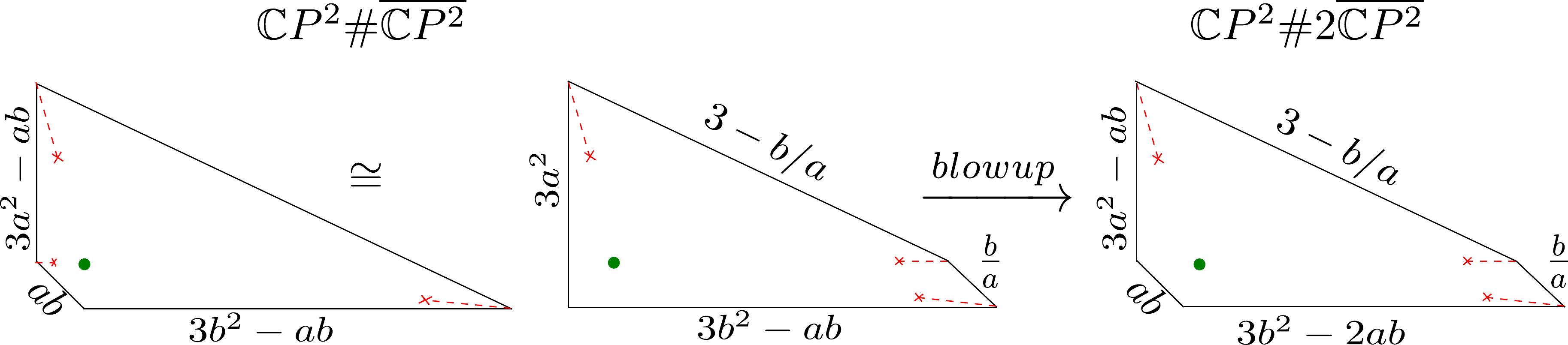}}

\caption{ATBD of $\BlII$ as a blowup of an ATBD of $\BlI$.}
\label{fig: ATBD2Blup} 

\end{center} 
\end{figure}

Eventhough we expect that each of these monotone Lagrangian mutually
belong to different symplectomorphism classes, for $k = 1 ,2$, we can't show that using
our technique. That is because we loose the positivity of intersection property
for the limit orbifold and hence we can't describe the boundary Maslov-2 convex hull
of $T_1(a,b)$ and $T_2(a,b)$.  

Nonetheless, for $k=1$, we can extract enough information about the boundary
Maslov-2 convex hull to show that there are infinitely many symplectomorphism classes
of monotone Lagrangian tori. More precisely, we can show that $\mho_{T_1(a,b)}$
must contain a vertex with {\it affine angle} $b' = 3a - b$ (the norm of
determinant of the matrix formed by the primitive vectors as columns). We can
also show that $\mho_{T_1(a,b)}$ is compact. Since we have infinitely many
possible values for $b'$, we must have infinitely many boundary Maslov-2 convex
hulls. Therefore, Theorem \ref{thm: main} \ref{thm: b} holds.
 
So we have:

\begin{cnj} \label{cnj: k2}
  There are infinitely many symplectomorphism classes of
  monotone Lagrangian tori inside $\BlII$.
\end{cnj}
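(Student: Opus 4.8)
The plan is to prove Conjecture \ref{cnj: k2} by transporting the strategy used for Theorem \ref{thm: main} \ref{thm: b} from $\BlI$ to $\BlII$. First I would fix the infinite family of monotone tori $T_2(a,b) \subset \BlII$ arising as central fibers of the ATBDs of Figure \ref{fig: ATBD2Blup}, obtained by blowing up the $\BlI$ diagrams of Figure \ref{fig: ATBD1Blup}; these are indexed by the same Markov triples $(1,a,b)$ with $b>a$, now carrying an extra node from the second blowup. As in the $\BlI$ case, the affine lengths of the diagram are governed by a Markov type II equation, so the mutation $(a,b) \mapsto (a, 3a-b)$ produces new members of the family and guarantees that the relevant index runs through infinitely many distinct values. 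Next I would neck-stretch along $T_2(a,b)$ to obtain a degenerate limit of the Maslov index $2$ disks inside the limit orbifold of the diagram, exactly as in \cite{Vi14}[Section~4].

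The key step is to extract, from these degenerate disks, a single affine invariant of $\mho_{T_2(a,b)}$ that depends on $(a,b)$ and takes infinitely many values. The limit orbifold of the $\BlII$ diagram is no longer a triangle: it acquires two additional edges from the two blowups (a pentagon, roughly the $\CP^2$ limit orbifold with two corners modified). Over one of its distinguished orbifold vertices one can still write down a local model for disks passing once near that point, and I would attempt to isolate the corresponding contribution and show that it forces a vertex of $\mho_{T_2(a,b)}$ with affine angle equal to the Markov-mutated value $b' = 3a-b$, mirroring the computation that succeeds for $T_1(a,b)$. Together with compactness of $\mho_{T_2(a,b)}$ --- which should follow from the same monotonicity/area bound on the classes carrying nonzero count that is used for $T_1(a,b)$ --- this would yield infinitely many pairwise incongruent convex hulls, hence infinitely many symplectomorphism classes.

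The hard part, and the reason this remains a conjecture, is that the pentagonal limit orbifold is not a triangle, so one loses the positivity of intersection between the degenerate curves and the preimages of the edges of the moment polytope that underlies the exact computation in the triangular cases. With two blowup edges rather than one, a degenerate configuration can distribute its components over the two exceptional directions, and I cannot currently rule out broken or bubbled configurations whose contributions interfere with --- and potentially cancel --- the vertex term one wants to isolate. In the $\BlI$ case the single extra edge lets positivity along that one direction pin down the decisive vertex; controlling the interaction of the two exceptional divisors in $\BlII$ is precisely the obstacle I do not know how to overcome, and a genuine proof would likely require either a finer analysis of the degenerate moduli space (for instance an obstruction-theoretic or tropical count that survives the loss of positivity) or an altogether different invariant.
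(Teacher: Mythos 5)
This statement is stated in the paper as a conjecture precisely because the paper has no proof of it, so there is no ``paper's own proof'' to compare against: the author explicitly writes that for $k=1,2$ the positivity-of-intersection property in the limit orbifold is lost and the boundary Maslov-2 convex hulls of $T_1(a,b)$ and $T_2(a,b)$ cannot be computed, and that only for $k=1$ can enough partial information be salvaged to conclude Theorem \ref{thm: main}\ref{thm: b}. Your proposal is therefore not a proof, but you say so yourself, and the strategy you outline --- take the tori $T_2(a,b)$ of Figure \ref{fig: ATBD2Blup}, neck-stretch into the limit orbifold, and try to pin down a vertex of $\mho_{T_2(a,b)}$ of affine angle $b'=3a-b$ together with compactness of the hull --- is exactly the transplant of the paper's $\BlI$ argument (Lemmas \ref{lem: 1BlupIntersec}--\ref{lem: cptconvexhull}) that the author also has in mind and cannot complete. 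Your diagnosis of the obstruction matches the paper's: in the $\BlI$ case the limit orbifold has four boundary divisors $A,B,C,E$ of which only $A$ has negative self-intersection that cannot be excluded by area reasons, so nonnegativity of $E\cdot[u^\infty_+]$, $B\cdot[u^\infty_+]$, $C\cdot[u^\infty_+]$ forces the coefficients $k,l,m\ge 0$ in the decomposition \eqref{eq: class} and isolates the corner $\del\alpha$; with a second exceptional direction in $\BlII$ one loses enough of these sign constraints that the corner can no longer be pinned down.

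The one point where you should be more careful is the claim that compactness of $\mho_{T_2(a,b)}$ ``should follow from the same area bound'': in the $\BlI$ proof of Lemma \ref{lem: cptconvexhull} the bound $k+l+m<N+1$ is extracted from a bound on the number of components in the single negative class $A$, and with two independent negative directions it is not automatic that a bound on each separately controls all coefficients in the analogue of \eqref{eq: class}. But since neither you nor the paper claims a proof, this is a refinement of the open problem rather than an error: your write-up is an accurate account of why Conjecture \ref{cnj: k2} remains open, not a resolution of it.
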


Consider two monotone Lagrangian fibres of ATFs for which their ATBDs are related
via one mutation. The algebraic count of Maslov index 2 pseudo-holomorphic disks
for this tori is expected to vary according to wall-crossing formulas
\cite{Au07,Au09, GaUs10, Vi13}. In view of that we conjecture:
  
\begin{cnj} \label{cnj: ConvexHull} 
  
The boundary Maslov-2 convex hull of a monotone Lagrangian fiber of an ATF
described by an ATBD (whose cuts are inside eigenline of the respective node)
is determined by the limit orbifold. Actually, the vertices of the convex hull
should be the primitive vectors that describe the fan of the limit orbifold.
 
\end{cnj}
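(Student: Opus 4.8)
The plan is to reduce the computation of $\mho_L$ to the superpotential of the central monotone fiber of the limit orbifold, and then to read off its Newton polytope from standard toric--orbifold data. First I would perform the limit-orbifold degeneration described informally before Theorem~\ref{thm: main}\ref{thm: a}: slide every node of the ATBD out to the corresponding edge along its eigenline, producing a one-parameter family of symplectic spaces that converges to the toric limit orbifold $\mathcal{O}$ of Definition~\ref{dfn: limorb}, with the monotone fiber $L$ persisting as the central toric fiber. Equivalently one neck-stretches \cite{EliGiHo10,CompSFT03} along the hypersurfaces bounding neighborhoods of the nodes. The aim is to match the algebraic count of Maslov-index-2 disks bounding $L$ with the count for the central fiber of $\mathcal{O}$.

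Second, I would run the SFT compactness argument exactly as in \cite[Section~4]{Vi14} for the triangular case. A Maslov-2 disk with boundary on $L$ degenerates to a (possibly broken) orbifold holomorphic building in $\mathcal{O}$, and since index $2$ is minimal the limiting configuration must carry all of the index in a single disk component with boundary on the central fiber. In the toric orbifold $\mathcal{O}$ these disks are computed by Cho--Poddar \cite{ChoPo14} together with positivity of intersection for orbifold disks \cite{ChenRu02,Chen04}: each facet of the moment polytope --- equivalently each ray $v_i$ of the fan --- supports a single family of Maslov-2 disks whose boundary class in $H_1(L)$ is the primitive normal vector $v_i$, and no other class carries nonzero count. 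Hence the Newton polytope of the superpotential of the central fiber of $\mathcal{O}$ has vertex set exactly $\{v_i\}$, the primitive generators of the fan. I would then establish invariance of the enumerative count across the degeneration, so that the count for $L$ in a class $\beta$ equals the count for the limiting class in $\mathcal{O}$; this is what makes $\mho_L$ depend only on $\mathcal{O}$ and not on the particular ATF, and combined with the previous step it yields $\mho_L = \mathrm{conv}\{v_i\}$ with the $v_i$ as vertices.

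The hard part --- flagged already in the discussion of Theorem~\ref{thm: main}\ref{thm: a} --- is precisely this invariance when the moment polytope of $\mathcal{O}$ is not a triangle. For a triangle, positivity of intersection between the degenerating disks and the three edge preimages pins down the homology class and forbids both spurious twisted-sector contributions and sphere bubbling at the nodes; once $\mathcal{O}$ has more facets this control is lost, disks may redistribute across the neck, and wall-crossing among mutation chambers \cite{Au07,Au09,GaUs10,Vi13} can in principle create cancellations or displace a vertex. Establishing that the count is genuinely well defined in the limit --- for instance via an open Gromov--Witten invariance statement, or a gluing/obstruction-bundle analysis showing that no vertex of the Newton polytope degenerates --- is the crux, and is exactly why the statement remains a conjecture rather than a theorem. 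A plausible route is to propagate the Newton polytope through the mutation wall-crossing formulas and show it stabilizes to $\mathrm{conv}\{v_i\}$, but controlling the extreme monomials through every wall is where I expect the argument to require genuinely new input.
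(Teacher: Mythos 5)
The statement you are trying to prove is Conjecture \ref{cnj: ConvexHull}; the paper does not prove it, and your write-up, to your credit, ends by conceding that the decisive step is missing. What you have written is essentially the proof of Theorem \ref{thm: convex hull}, which is the special case where the ATBD has triangular shape: neck-stretch along hypersurfaces enclosing the nodes, compactify the top level of the building to a degenerate orbifold disk $u^\infty_+$ in the limit orbifold, use Cho--Poddar to identify the standard Maslov-2 families, and use positivity of intersection with the edge preimages to show no other boundary class can carry a nonzero count outside the convex hull of the fan generators. That argument is correct and is exactly the paper's argument --- but only when the moment polytope of the limit orbifold is a triangle, because there the relevant edge preimages have positive self-intersection (Claim \ref{clm: 2}) and every component of $[u^\infty_+]$ intersects them non-negatively.

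The gap is that for a general ATBD the limit orbifold can have edge preimages of negative self-intersection, and then a component of the limit building that is a positive multiple of such a divisor contributes negatively to the intersection number, so the class $[u^\infty_+]$ is no longer confined to the claimed convex hull. This is not hypothetical: the paper's own analysis of $\BlI$ in Section \ref{subsec: thmb} exhibits $A \cdot A = (a^2-b^2)/b^2 < 0$, and as a result the paper can only establish one corner of $\mho_{T_1(a,b)}$ (Lemmas \ref{lem: 1blupCorner}, \ref{lem: affine angle}) together with compactness (Lemma \ref{lem: cptconvexhull}), not the full hull. Your suggested remedies --- an open Gromov--Witten invariance statement across the degeneration, or propagating the Newton polytope through the mutation wall-crossing formulas of \cite{Au07,Au09,GaUs10,Vi13} and showing the extreme monomials survive every wall --- are plausible directions, but neither is carried out here or in the paper, and without one of them the statement remains exactly what the paper says it is: a conjecture.
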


Which would allow us to conclude:  

\begin{cnj} \label{cnj: AllToriDifferent} 
  
Suppose we have two monotone Lagrangian fibres of ATFs of the same symplectic
manifold, described by ATBDs whose orbifold limits are different. Then they are
not symplectomorphic.
 
\end{cnj}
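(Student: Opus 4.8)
The plan is to derive this directly from Conjecture \ref{cnj: ConvexHull}, using that the boundary Maslov-2 convex hull $\mho_L$ is an invariant of the symplectomorphism class of a monotone Lagrangian $L$. First I would record the functoriality that underlies the whole scheme: a symplectomorphism $\phi\colon X \to X$ with $\phi(L_1) = L_2$ induces a lattice isomorphism $\phi_*\colon H_1(L_1) \to H_1(L_2)$ carrying Maslov index 2 classes of nonzero enumerative count to one another, and hence sending $\mho_{L_1}$ bijectively onto $\mho_{L_2}$. As $H_1(L_i) \cong \Z^2$, this means $\mho_{L_1}$ and $\mho_{L_2}$ are congruent under the change-of-basis group $GL(2,\Z)$. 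Thus it suffices to prove the contrapositive in the form: if the two limit orbifolds are different, then $\mho_{L_1}$ and $\mho_{L_2}$ are not $GL(2,\Z)$-congruent.

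Next I would invoke Conjecture \ref{cnj: ConvexHull}, which identifies the vertices of $\mho_L$ with the primitive generators of the rays of the fan $\Sigma_L$ of the limit orbifold $\mO_L$. Granting this, the problem becomes purely combinatorial: $\Sigma_L$ is a complete fan in $\Z^2$, i.e. a cyclic family of primitive vectors surrounding the origin, and I must show that non-isomorphic limit orbifolds produce ray generators whose convex hulls are not $GL(2,\Z)$-congruent. The invariants I would read off are, at each vertex $v$ of the polygon $\mho_L$ with cyclic neighbours $v^-$ and $v^+$, the affine angle $|\det(v^-, v^+)|$ together with the continued-fraction data describing the orbifold singularity at the corresponding fixed point; these are manifestly $GL(2,\Z)$-invariant, so the cyclic word of local invariants around $\mho_L$ is itself a symplectomorphism invariant of $L$.

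The core of the argument is then a reconstruction step. A complete simplicial fan in $\Z^2$ is recovered from the convex hull of its ray generators by taking the fan over the faces of that polygon, so the decorated convex hull determines $\Sigma_L$, and hence $\mO_L$ up to equivariant isomorphism. Since, for a fixed del Pezzo surface, only finitely many limit orbifolds occur --- these being controlled by the same Karpov--Nogin classification used to enumerate the triangular ATBDs --- the conclusion would follow by checking that their decorated convex hulls are pairwise incongruent, which the affine-angle data should achieve.

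The hard part will be this reconstruction, which is where I expect the real difficulty to sit. Conjecture \ref{cnj: ConvexHull} pins down only the \emph{vertices} of $\mho_L$, so I must first guarantee that \emph{every} ray generator of $\Sigma_L$ is an actual vertex of the convex hull and not hidden in the relative interior of an edge; otherwise the convex hull would forget part of the fan and the map from orbifolds to hulls could fail to be injective. For weighted projective spaces $\CP(\kabc)$ the fan has three rays forming a triangle and this is automatic, but for the del Pezzo limit orbifolds with more rays it requires the near-reflexivity of their moment polytopes and a case check. The remaining task is to exclude accidental $GL(2,\Z)$-congruences between the hulls of distinct fans, for which the affine-angle decorations are the natural separating invariant. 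Carrying both points out uniformly, rather than orbifold by orbifold, together with the fact that the whole argument is conditional on the still-open Conjecture \ref{cnj: ConvexHull}, is exactly why the statement is posed as a conjecture.
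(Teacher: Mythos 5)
The first thing to note is that the paper does not prove this statement: it is posed as Conjecture \ref{cnj: AllToriDifferent}, explicitly presented as something that ``would'' follow from Conjecture \ref{cnj: ConvexHull}, which is itself open (motivated only by expected wall-crossing behaviour of the disk counts). So your proposal cannot be measured against a proof in the paper; at best it fleshes out the intended implication from Conjecture \ref{cnj: ConvexHull} to Conjecture \ref{cnj: AllToriDifferent}, and since it rests entirely on that unproven input it is not a proof. What the paper actually establishes unconditionally (Theorem \ref{thm: convex hull}) is the triangular case: there the limit orbifold has a triangular moment polytope, positivity of intersection with the three toric divisors survives the neck-stretching degeneration, and $\mho_{\Tpqr}$ is computed to be a triangle with edge affine lengths $n_1p, n_2q, n_3r$, which already separates the tori. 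The general statement stays conjectural precisely because this positivity argument breaks for non-triangular limit orbifolds, as the paper points out for $\BlI$ and $\BlII$.

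Within your conditional argument there are two further concrete problems. First, the reconstruction worry you flag is real and is not resolved: for a complete fan in $\Z^2$ a primitive ray generator can lie on, or strictly inside, the convex hull of the other generators (e.g.\ the Hirzebruch fan with rays $(1,0),(0,1),(-1,a),(0,-1)$ has $(0,1)$ on an edge of the hull for $a=2$ and in its interior for $a>2$), so the convex hull of the generators does not in general determine the fan, and Conjecture \ref{cnj: ConvexHull} as stated only controls the vertices of $\mho_L$. Second, your claim that for a fixed del Pezzo surface only finitely many limit orbifolds occur is false: each Markov type I equation has infinitely many solutions, hence infinitely many limit orbifolds (already for $\CP^2$ one gets $\CP(a^2,b^2,c^2)$ for every Markov triple); Karpov--Nogin classify the equations and their minimal solutions, not the solution sets. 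Consequently the ``pairwise incongruence by finite check'' step cannot be carried out as described and would have to be replaced by a uniform injectivity argument, which in the triangular case is exactly the affine-length computation of Theorem \ref{thm: convex hull} and in general is precisely what is missing.
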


So we expect to have many more symplectomorphism classes of monotone Lagrangian tori
than the ones of $\Tpqr$.

Consider now a ATBD of triangular shape described in Figures \ref{fig:
triangshape1}-\ref{fig: triangshape4}. Call $X$ the corresponding del Pezzo
surface. Since they have no rank $0$ singularity, there is a smooth symplectic
torus $\Sigma$ living over the edge of the base of the corresponding ATF and
representing the anti-canonical class \cite{Sy03}. We can assume that a
neighbourhood of $\Sigma$ remains invariant under the mutations of the ATBD's,
so $\Sigma$ is always living over the edge of the base of corresponding ATF. 

Hence all the tori $\Theta^{n_1,n_2,n_3}_{p,q,r} \subset X$ live on $X \setminus 
\Sigma$ and are in different Hamiltonian isotopic classes there. The complement of (a 
neighbourhood) of $\Sigma$ has a contactype boundary $V$ with a Liouville vector
field pointing outiside. Hence we can atach the positive half of a symplectization,
obtaining $(X \setminus \Sigma)\cup(V \times [0,+\infty))$. We can show 

\begin{thm} \label{thm: X-Sigma}
  The tori $\Theta^{n_1,n_2,n_3}_{p,q,r} \subset (X \setminus \Sigma)\cup(V \times [0,+\infty))$
  belong to mutually different Hamiltonian isotopy classes.
\end{thm}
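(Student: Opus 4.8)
The plan is to upgrade the symplectomorphism/Hamiltonian-isotopy distinction of the $\Theta^{n_1,n_2,n_3}_{p,q,r}$ inside the closed manifold $X$ to a distinction inside the open (and then completed) manifold $X\setminus\Sigma$. The key point is that the boundary Maslov-2 convex hull $\mho_{\Theta^{n_1,n_2,n_3}_{p,q,r}}$, which we have already computed and which already distinguishes these tori up to symplectomorphism in $X$, is built out of the count of Maslov index $2$ holomorphic disks with boundary on the torus. So I would first argue that \emph{each such disk avoids the anticanonical torus $\Sigma$}, so that the relevant disk count in $X$ equals the disk count in $X\setminus\Sigma$ (and in its symplectization completion). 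This is where the triangular ATBD and the placement of $\Sigma$ over the edge is essential: $\Sigma$ lives over the boundary of the ATF base, while the torus $\Theta^{n_1,n_2,n_3}_{p,q,r}$ is the central monotone fiber lying in the complement of all the cuts, so a neighbourhood of $\Sigma$ can be removed without touching the fiber.

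First I would recall that $[\Sigma]=-K_X$ is the anticanonical class, so for any Maslov index $2$ disk class $\beta\in\pi_2(X,L)$ the intersection number with $\Sigma$ is controlled by the index: monotonicity gives $\mu(\beta)=2\langle c_1,\beta\rangle$, and for the specific classes contributing to $\mho_L$ one checks that the algebraic intersection with the smooth representative $\Sigma$ of $-K_X$ is zero. Then, using positivity of intersection for $J$-holomorphic disks (with $J$ chosen making $\Sigma$ holomorphic), algebraic intersection zero forces \emph{geometric} disjointness, so every contributing disk lies entirely in $X\setminus\Sigma$. Consequently the superpotential, hence $\mho_L$, computed in $X\setminus\Sigma$ agrees with the one computed in $X$. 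Second, I would invoke the invariance of this disk count under the relevant Floer-theoretic machinery in the \emph{completed} manifold $(X\setminus\Sigma)\cup(V\times[0,+\infty))$: neck-stretching along the contact-type hypersurface $V$ shows that disks with boundary on $L$ sitting in $X\setminus\Sigma$ neither escape into the positive end nor acquire extra contributions, because any such disk would have positive symplectic area measured against the Liouville form and the Maslov index constraint bounds the energy. Thus $\mho_L$ is an invariant of the Hamiltonian isotopy class of $L$ in the completed manifold as well.

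Finally, having established that $\mho_{\Theta^{n_1,n_2,n_3}_{p,q,r}}$ is a well-defined invariant in the completed $X\setminus\Sigma$ that coincides with its value in $X$, I would conclude exactly as in the closed case: since the $\mho$'s are mutually incongruent (not related by $SL(2,\Z)$) for distinct triples, the tori lie in mutually distinct Hamiltonian isotopy classes in $(X\setminus\Sigma)\cup(V\times[0,+\infty))$. \textbf{The hard part} will be the second step: controlling the behavior of the moduli space of Maslov $2$ disks under attaching the symplectization and ruling out additional disk configurations (broken disks with ends on Reeb orbits of $V$) that could alter the count. This requires an energy/index argument showing that the relevant low-index disks stay in a compact region and that no Reeb-orbit breaking occurs for the Maslov $2$ classes, which I expect to handle via the monotonicity of $L$ together with the exactness of the symplectic form on $X\setminus\Sigma$ (so that $L$ becomes an \emph{exact} Lagrangian there, and the only disks of Maslov index $2$ are the geometrically constrained ones already counted).
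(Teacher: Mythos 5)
There is a genuine gap, and it sits at the very first step of your plan. You claim that the Maslov index $2$ classes contributing to $\mho_L$ have algebraic intersection zero with $\Sigma$, so that positivity of intersection forces the contributing disks into $X\setminus\Sigma$. This is backwards: since $\Sigma$ represents the anticanonical class, it is Poincar\'e dual to $c_1$, and as a cycle in $X\setminus L$ it is Poincar\'e dual to $\mu/2$ on $H_2(X,L)$ (the paper states this explicitly inside its own proof). Hence any class $\beta$ with $\mu(\beta)=2$ satisfies $\beta\cdot\Sigma=1$, not $0$, and by positivity of intersection \emph{every} Maslov index $2$ disk meets $\Sigma$ exactly once. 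So none of the disks defining $\mho_L$ survive in $X\setminus\Sigma$; indeed, as you yourself note in your closing sentence, the tori become \emph{exact} Lagrangians in the exact manifold $X\setminus\Sigma$ and therefore bound no nonconstant holomorphic disks there at all. The invariant you propose to ``compute in $X\setminus\Sigma$'' is vacuous and cannot distinguish anything, so the entire strategy of transporting $\mho_L$ to the complement and its completion collapses. The second step (ruling out Reeb breaking in the symplectization end) is therefore moot, but it would also not rescue the argument.

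The paper's proof goes in the opposite direction: rather than pushing the disk count into the open manifold, it pulls a putative Hamiltonian isotopy back into a closed one. A Hamiltonian isotopy between two of the tori in $(X\setminus\Sigma)\cup(V\times[0,+\infty))$ moves compact sets through a compact region, so it can be cut off to be the identity outside $(X\setminus\Sigma)\cup(V\times[0,C))$ for some $C$; collapsing the Reeb orbits in $V\times\{C\}$ produces the del Pezzo surface $(X,\omega_C)$, an inflation of $(X,\omega)$ along $\Sigma$, in which the tori are again monotone fibers of the (rescaled) ATBDs. The isotopy descends, and the closed-manifold invariant $\mho_{\Theta^{n_1,n_2,n_3}_{p,q,r}}$ (already computed in Theorem \ref{thm: convex hull}) gives the contradiction. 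If you want to salvage your outline, this reduction to the closed case is the step you need to replace your first step with; the disks you are counting genuinely live in $X$ (or in $(X,\omega_C)$), each crossing $\Sigma$ once, and not in its complement.
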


For the case of the complement of an elliptic curve in $\CP^2$ the above theorem 
can be proved by Tonkonog using a different approach. By looking at a 
Lagrangian skeleton of $X \setminus \Sigma$, Shende-Teumann-Williams \cite{STW15} can 
show that there exists infinitelly many distinct subcategories of the category
of microlocal sheaves on the Lagrangian skeleton. The Lagrangian skeleton is 
given by attaching Lagrangian disks to an torus. The subcategories mentioned 
above corresponds to sheaves on the tori given by mutations that are equivalent
to the ones we see in ATBDs, see Section \ref{subsec: STW}.

The rest of the paper is organised as follows:

We start defining some terminology in Section \ref{sec: term}. We suggest the
reader to move directly to Section \ref{sec: ATF} and use Section \ref{sec:
term} only if some terminology is not clear from the context. 

In Section \ref{sec: ATF}, we describe how to obtain all the ATBDs of Figures
\ref{fig: triangshape1}-\ref{fig: triangshape4}, also showing how to ``create
space'' to perform a blowup by changing the ATF. In Section \ref{subsec:
ATBlup}, we make a small digression to describe how to perform an almost toric
blowup. We believe that the reader should become easily acquainted with the
operations on the ATBDs and be able to deduce the moves just by looking 
at the Figures \ref{fig: 3Blup}-\ref{fig: 6Blup}, \ref{fig: 7Blup},\ref{fig: 
8Blup}. Nonetheless, we provide explicit description of each operation on the 
ATBDs.

In Section \ref{sec: Mutation}, we show that mutations of Markov type I
and II equations corresponds to mutations of ATBDs of triangular shape. 
We also show that any monotone ATBD of triangular shape is node-related
(Definition \ref{dfn: ATBDequation}) to a Markov type I equation. It 
follows from \cite[Section~3.5]{KaNo98} that Figures \ref{fig: 
triangshape1}-\ref{fig: triangshape4} provide a complete list of
ATBDs of triangular shape for del Pezzo surfaces.

In Section \ref{sec: Proof}, we compute the boundary Maslov-2 convex hull
$\mho_{\Theta^{n_1,n_2,n_3}_{p,q,r}}$ (Theorem \ref{thm: convex hull}),
which finishes the proof of Theorem \ref{thm: main}\ref{thm: a}. We 
prove Theorem \ref{thm: main}\ref{thm: b} in Section \ref{subsec: thmb}
and Theorem \ref{thm: X-Sigma} in Section \ref{subsec: X-Sigma}  

In Section \ref{sec: relating} we relate our work with \cite{STW15}, by pointing
out that the complement of the symplectic torus $\Sigma$ in the anti-canonical
class is obtained from attaching (Weinstein handles along the boundary of)
Lagrangian discs to the (co-disk bundle of the) monotone fiber of each ATBD. In
particular, these tori are exact on the complement of $\Sigma$. We also relate
our work with \cite{Ke15_1}, where Keating shows how modality 1 Milnor fibers
$\mathcal{T}_{p,q,r}$, for $(p,q,r) \in \{(3,3,3),(2,4,4),(2,3,6)\}$ compatify
to del Pezzo surfaces of degree $d= 3,2,1$. It follows from Theorem \ref{thm:
X-Sigma}, that there are infinitely many Hamiltonian isotopic classes of exact
tori in $\mathcal{T}_{p,q,r}$, for $(p,q,r) \in \{(3,3,3),(2,4,4),(2,3,6)\}$.
Also, in \cite[Section~7.4]{Ke15_2}, Keating mention that all Milnor fibers
$\mathcal{T}_{p,q,r}$ are obtained by attaching Lagrangian discs to a Lagrangian
torus as described in \cite{STW15}. We conjecture then that there are infinitely
many exact tori in $\mathcal{T}_{p,q,r}$. We believe this conjecture is also
made in \cite{STW15}. In Section \ref{subsec: KaNoHP}, we point out that the
Markov type I equations appear before related to $3$-blocks exceptional
collections in the del Pezzo surfaces \cite{KaNo98} and $\Q$-Gorenstein
smoothing of weighted projetive spaces to del Pezzo surfaces \cite{HaPr10}. We
enquire if there is a correspondence between ATBDs - $3$-blocks exceptional
collections - $\Q$-Gorenstein smoothings. Finally, we relate the ATBD of $\PxP$
in Figure \ref{fig: triangshape1} with the singular Lagrangian fibration given
by Fukaya-Oh-Ohta-Ono in \cite{FO312}, as well as a similar ATBD of $\CP^2$ with
the singular Lagrangian fibration described in \cite{Wu15}. In
\cite{FO312,FO311a} it was shown that there are a continuous of non-displaceable
fibers in the monotone $\PxP$ and in $\Blk$ for $k \ge 2$ with respect to some
symplectic form, but not monotone for $k > 2$. We ask what ATBDs have a
continuous of non-displaceable fibers.

\subsection*{Acknowledgements} We would like to thanks Ivan Smith, Dmitry
Tonkonog, Georgious Dimitroglou Rizell, Denis Auroux, Jonathan David Evans and
Kaoru Ono for useful conversations. Section \ref{sec: relating} was very much
motivated by talks given by Vivek Shende and Ailsa Keating at the Symplectic
Geometry and Topology Workshop held in Uppsala on September 2015.

\section{Terminology} \label{sec: term}

Before we describe how to get almost toric fibrations on all del Pezzo surfaces,
let's fix some terminology. A lot of the terminology can be intuitively grasped,
so we suggest the reader to move on to the next section and only use this 
section as a reference for terminology.

We recall that a \emph{primitive vector} on the standard lattice of $\R^2$ is an
integer vector that is not a positive multiple of another integer vector.

We also recall that an ATBD is the image of an affine map from the base of an 
ATF, minus a set of cuts, to $\R^2$ endowed with the standar affine structure.
Let $s$ be a node of an ATF and $R^+$ an eigenray leaving $s$. Suppose we have
an ATBD where the cut associated to $s$ is a ray equals to ``the image of'' $R^+$. 

\begin{dfn} \label{dfn: nodetype}
  We say that $R^+$ is an $(m,n)$-eigenray of an ATBD if it points towards the
  node in the direction of the primitive vector $(m,n) \in \Z^2 \subset \R^2$.
  We also say that $s$ is an $(m,n)$-node of the ATBD.
\end{dfn}
  
 We recall from \cite[Definition~2.1]{Vi14} that, a {\it transferring the cut}
 operation with respect to $R^+$ gives another ATBD, representing the same ATF,
 but with a cut (the image of) $R^-$, the eigenray other than $R^+$ pertaining
 to the same eigenline. In this paper we overlook the fact that we have two
 options (left and right) for performing a transferring the cut operation, since
 the two resulting ATBD are related via $SL(2,\Z)$.

\begin{dfn} \label{dfn: mutation}

We call a \emph{mutation} with respect to a \emph{$(m,n)$-node} an operation on
an ATBD containing a monotone fibre consisting of: a nodal slide
\cite[Section~6.1]{Sy03} of the corresponding $(m,n)$-eigenray passing through
the monotone fibre; and one transferring the cut operation with respect to the
same eigenray. 
\end{dfn}

\begin{dfn} \label{dfn: fullmutation}

A total \emph{mutation} is a mutation with respect to all $(m,n)$-nodes, for some
$(m,n)$. 
\end{dfn}

\begin{dfn}
  A Markov type I equation, is an integer equation for a triple $(p,q,r)$ of the
  form:
  
 \begin{equation} \label{eq: MarkovI}
 n_1p^2 + n_2q^2 + n_3r^2 = \sqrt{dn_1n_2n_3}pqr,
  \end{equation}
  
  for some constants $d,n_1,n_2,n_3 \in \Z_{>0}$, so that $dn_in_j \equiv 0 \mod
  n_k$, $\{i,j,k\} = \{1,2,3\}$ and $dn_1n_2n_3$ is a square. A solution
  $(p,q,r)$ is called a Markov type I triple, if $p,q,r \in \Z_{>0}$. \end{dfn}

\begin{dfn}

Let $(p,q,r)$ be a Markov type I triple. The Markov type I triple \\
$(p'= \sqrt{\frac{dn_2n_3}{n_1}}qr - p, q, r)$ is said to be obtained from $(p,q,r)$ via   
a mutation with respect to $p$. Analogous for mutation with respect to $q$ and 
$r$.  
   
\end{dfn}

\begin{dfn}
  A Markov type II equation, is an integer equation for a triple $(a,b,c)$ of the
  form:
  
  \begin{equation} \label{eq: Markovtype}
    k_1a^2 + k_2b^2 + k_3c^2 = Kk_1k_2k_3abc, 
  \end{equation}
  
  for some constants $K,k_1,k_2,k_3 \in \Z_{>0}$. A solution $(a,b,c)$ is called 
  a Markov type II triple, if $a,b,c \in \Z_{>0}$. 
  
\end{dfn}

\begin{dfn}

Let $(a,b,c)$ be a Markov type II triple. The Markov type II triple
$(a'= Kk_2k_3bc - a, b, c)$ is said to be obtained from $(a,b,c)$ via   
a mutation with respect to $a$. Analogous for mutation with respect to $b$ and 
$c$.

\end{dfn}

\begin{dfn}
 A Markov type I/II triple $(p,q,r)$/$(a,b,c)$ is said to be minimum if it minimizes
 the sum $p+q+r$/$a+b+c$, among Markov type I/II triples. 
\end{dfn}

\begin{dfn}
  An ATBD of triangular shape is an ATBD whose cuts are all in the direction of the 
 respective eigenrays of the associated node and whose closure is a triangle in $\R^2$.
\end{dfn}

\begin{dfn}
  An ATBD of \emph{length type} $(A,B,C)$, is an ATBD of triangular shape whose edges
  have affine lengths \emph{proportional} to $(A,B,C)$.
\end{dfn}

\begin{dfn}
  An ATBD of \emph{node type} $((n_1,p),(n_2,q),(n_3,r))$, is an ATBD of triangular 
  shape with the three cuts $R_1,R_2,R_3$ contains respectively $n_1,n_2,n_3$ 
  nodes, and the determinant of primitive vectors of the edges connecting at the cut 
  $R_1$, respectively $R_2$, $R_3$, have norm equals to $n_1p^2$, respectively,
  $n_2q^2$, $n_3r^2$.
\end{dfn}

Note that the above definition can be generalized to any ATBD whose cuts are
all in the direction of an eigenray leaving the respective node.

\begin{dfn} \label{dfn: ATBDequation} 

We say that an ATBD is length-related to an Markov type II equation \eqref{eq:
Markovtype} if it is of length type $(k_1a^2,k_2b^2,k_3c^2)$, for some Markov
type II triple $(a,b,c)$. \\
We say that an ATBD is node-related to an Markov type I
equation \eqref{eq: MarkovI} if it is of node type \\
$((n_1,p),(n_2,q),(n_3,r))$, for some Markov type I triple $(p,q,r)$, and
moreover the total space of the corresponding ATF is a del Pezzo of degree $d$,
i.e., for $\Blk$, $d = 9 - k$, and for $\PxP$, $d=2$. 

\end{dfn}

We also define the limit orbifold of an ATBD:

\begin{dfn} \label{dfn: limorb}
  Given an ATBD, its \emph{limit orbifold} is the orbifold for which the moment 
  map image is equal to the ATBD without the nodes and cuts, which are replaced 
  by corners (usually not smooth).
\end{dfn}

\section{Almost toric fibrations of del Pezzo surfaces} \label{sec: ATF}

To perform a blowup in the symplectic category \cite{MDSaBook_SympTop}, one
deletes a symplectic ball $B(\epsilon)$ of radius $\epsilon$ and collapses the
fibres of the Hopf fibration of $\del B(\epsilon)$ to points. In particular,
the blowup depends on the radius $\epsilon$ one takes. In a toric symplectic 
manifold, one can perform a blowup near a rank $0$ singularity and remain 
toric, provided one chooses an small enough ball compatible with the toric
fibration, see Figure \ref{fig: IntroBlup}.

\begin{figure}[h!]   
  
\begin{center} 
  
\centerline{\includegraphics[scale=0.4]{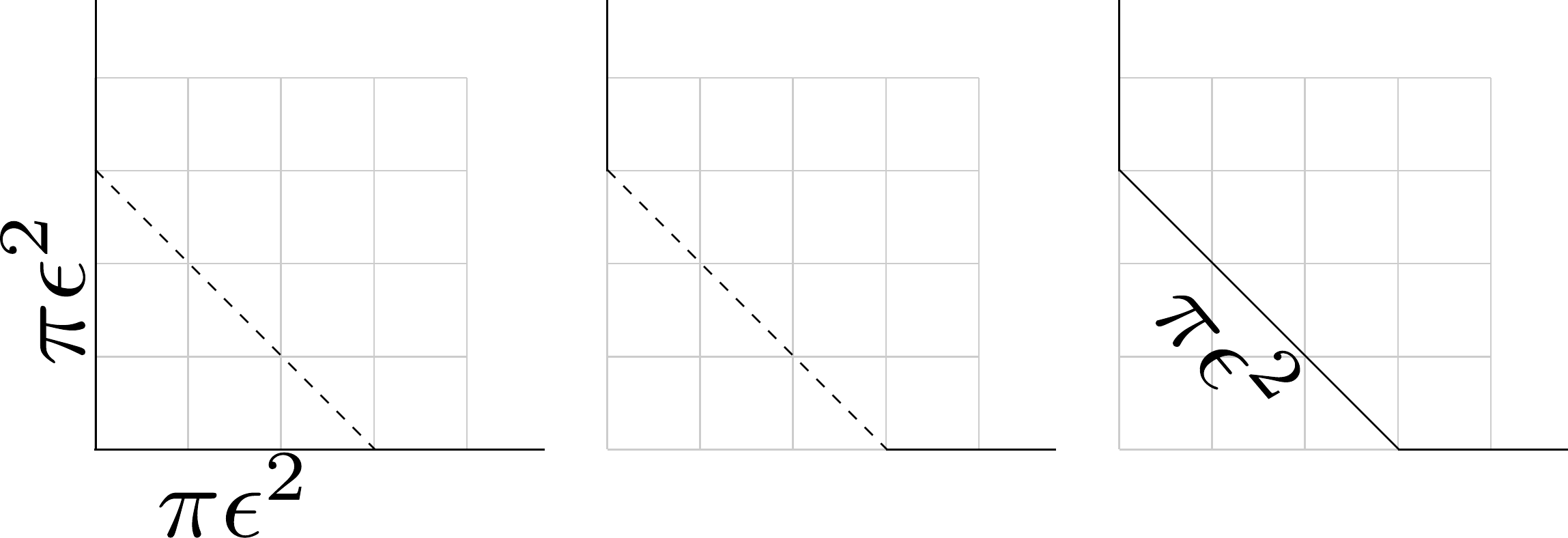}}

\caption{Blowup of a ball $B(\epsilon)$ centered at a rank 0 singularity in a toric manifold}
\label{fig: IntroBlup} 

\end{center} 
\end{figure}

  We recall that a symplectic manifold $(X, \omega)$ is said to be monotone if there exists 
  $C > 0$ such that $\forall H \in \pi_2(X)$:

\begin{equation}   
\int_H \omega = C \mathrm{c}_1(H).  \label{eq: mon}  
\end{equation}

  And Lagrangian $L \subset X$ is said to be monotone  if there exists 
  $C_L > 0$ such that $\forall \beta \in \pi_2(X, L)$:

\begin{equation}   
\int_\beta \omega = C_L\mu_L(\beta),  \label{eq: monLag}  
\end{equation}
where $\mu_L$ is the Maslov index.

Since $\mathrm{c}_1 = 2 \mu_{L|\pi_2(X)}$, if $\pi_2(X) \ne 0$, then $2C 
= C_L$. Also, if $L$ is orientable $\mu_L(\beta) \in 2\Z$.

The monotonicity condition is then affected by the size of the symplectic blow
up. In dimension 4, when we perform a symplectic blowup, we modify the second
homology group by adding a spherical class - coming from the quotient of
$S^3$ under the Hopf fibration - of Chern number 1. Therefore to keep
monotonicity one must choose the radius of the symplectic ball, so that the
quotient sphere has the appropriate symplectic area \eqref{eq: mon}.

One is able to perform symplectic blowup in one, two or three corners of the
moment polytope of $\CP^2$ to obtain monotone toric structures on $\BlI$,
$\BlII$, $\BlIII$. But one cannot go further, since it is not possible to
torically embed a ball of appropriate radius centered in a corner of the moment
polytope of $\BlIII$, depicted in the left-most picture of Figure \ref{fig:
3Blup}. 

Nonetheless, it is possible to create some space for the blowup if we only
require to remain almost toric. 

We are now ready to describe ATFs for all del Pezzo surfaces. In all ATBDs
apearing on Figures \ref{fig: 3Blup}, \ref{fig: 4Blup}, \ref{fig: 5Blup},
\ref{fig: 6Blup}, \ref{fig: 7Blup},\ref{fig: 8Blup},\ref{fig: PxP}, the interior
dot represents the monotone fibre. The reader should easily become familiar with
the operations and be able to read them from the pictures. Nonetheless, we give
explicit descriptions of the operations in each step.

\subsection{ATFs of $\BlIII$}

To arrive at an ATF of $\BlIV$, we perform some sequence of nodal trades and
mutations on the ATBDs of $\BlIII$ described on Figure \ref{fig: 3Blup}. And
eventually we are able to perform a blowup, and obtain an ATBD for $\BlIV$. We
are also albe to get the ATBD of triangular shape for $\BlIII$ (Figure \ref{fig: 3Blup}$(A_1)$) apearring in
Figure \ref{fig: triangshape1}.
The operations relating each diagram in Figure \ref{fig: 3Blup} are described below:

\begin{enumerate}[label=(\subscript{A}{\arabic*})]
    \item Toric moment polytope for $\BlIII$;
    \item Applied two nodal trades, getting $(1,0)$ and $(-1,0)$ nodes;    
    \item Mutated $(1,0)$-node and applied two nodal trades, getting $(0,1)$ and $(0,-1)$ nodes;     
    \item Mutated $(0,1)$-node and applied one nodal trade, getting a $(1,-1)$-node;
    \item Mutated $(1,-1)$-node.    
\end{enumerate}

 \begin{figure}[h!]   
  
\begin{center} 
  
\centerline{\includegraphics[scale=0.5]{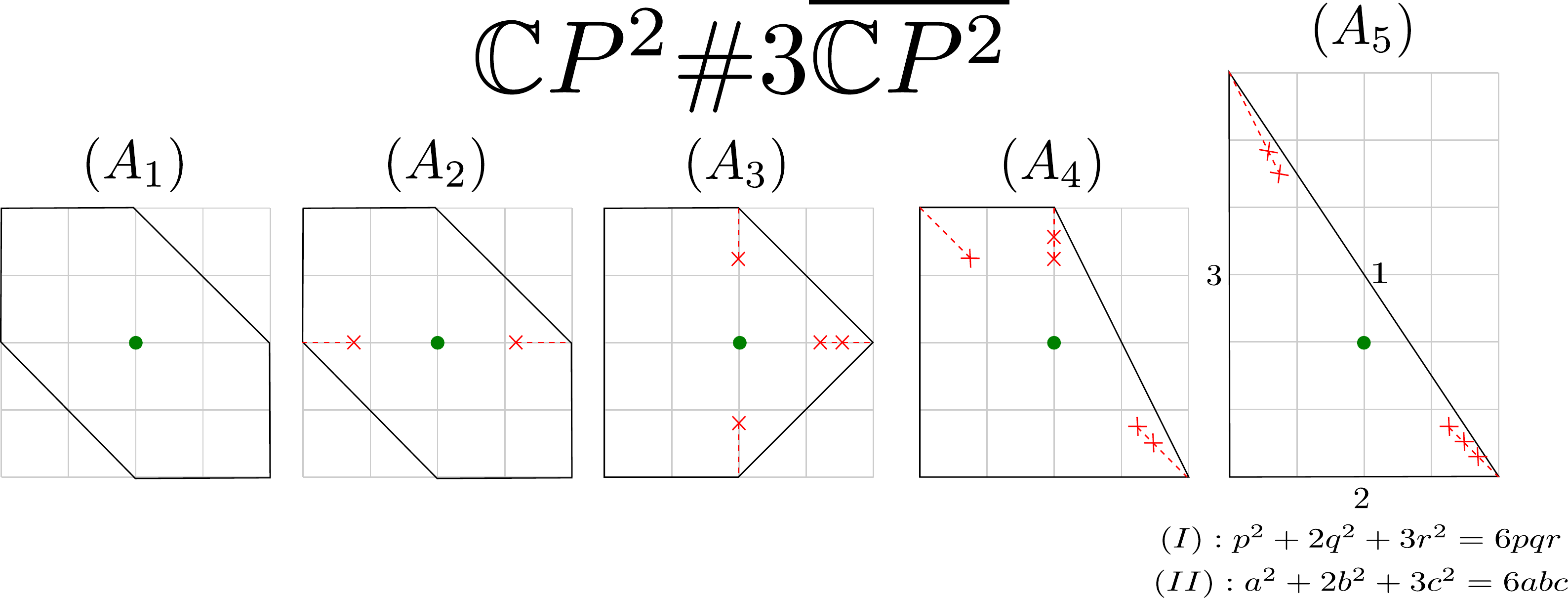}}

\caption{ATBDs of $\BlIII$.}
\label{fig: 3Blup} 

\end{center} 
\end{figure}

\subsection{ATFs of $\BlIV$}

We now see that we have created enough space to perform a toric blowup on the
corner (rank 0 singularity) of the 4th or 5th ATBD of Figure \ref{fig: 3Blup},
in order to obtain an ATF of $\BlIV$. We then perform some nodal trades and
mutations to, not only create more space for performing another blowup, but also
to get the ATBD of triangular shape in Figure \ref{fig: triangshape1}. 

The operations relating each diagram in Figure \ref{fig: 4Blup} are described below:

\begin{enumerate}[label=(\subscript{A}{\arabic*})]
    \item Blowup the corner of the ATBD $(A_5)$ of Figure \ref{fig: 3Blup};
    \item Applied one nodal trade, getting a $(0,1)$-node;         
    \item Mutated $(0,1)$-node.
    \item Mutated both $(1,-1)$-nodes.    
\end{enumerate}

\begin{figure}[h!]   
  
\begin{center} 
  
\centerline{\includegraphics[scale=0.45]{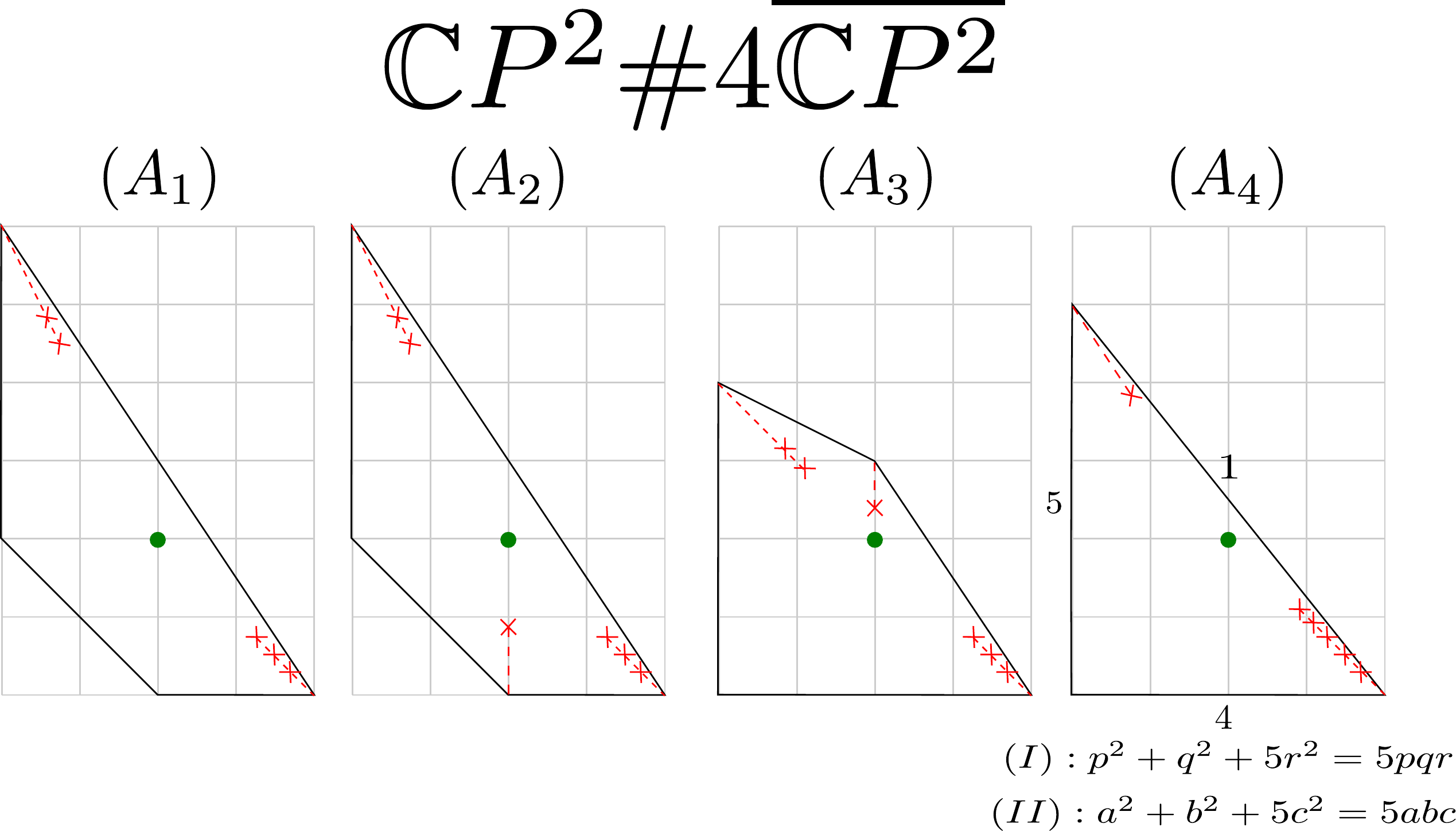}}

\caption{ATBDs of $\BlIV$. }
\label{fig: 4Blup} 

\end{center} 
\end{figure}

\subsection{ATFs of $\BlV$}
The ATDB of Figure \ref{fig:
triangshape2}, which is a $\pi/2$ rotation
of the ATBD $(B_2)$ in Figure \ref{fig: 5Blup}. The ATBD $(C_1)$ in Figure
\ref{fig: 5Blup} is used to perform another blowup.

\begin{figure}[h!]   
  
\begin{center} 
  
\centerline{\includegraphics[scale=0.365]{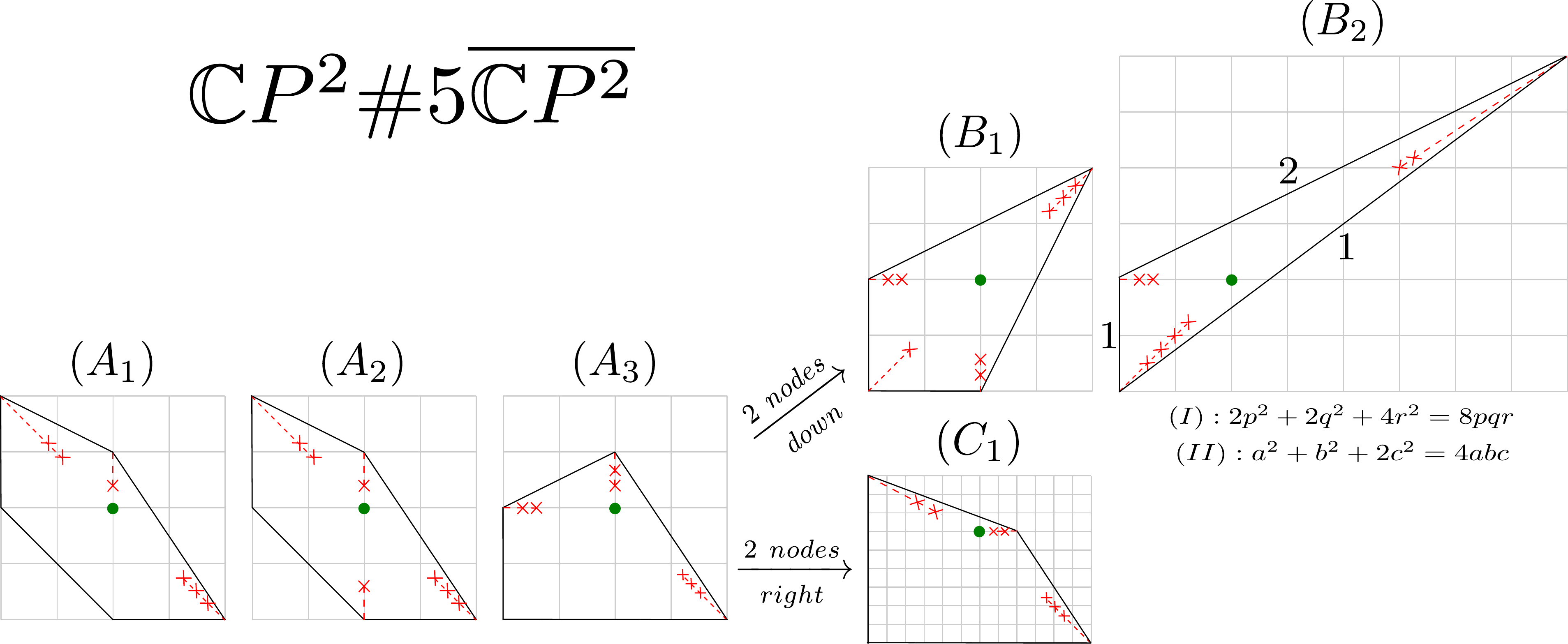}}

\caption{ATBDs of $\BlV$. }
\label{fig: 5Blup} 

\end{center} 
\end{figure}

The operations relating the $(A)$'s diagrams in Figure \ref{fig: 5Blup} are described below:

\begin{enumerate}[label=(\subscript{A}{\arabic*})]
    \item Blowup the corner of the ATBD $(A_3)$ of Figure \ref{fig: 4Blup};
    \item Applied one nodal trade, getting a $(0,1)$-node;         
    \item Mutated $(0,1)$-node.    
\end{enumerate}

Following the top arrow towards the $(B)$'s diagrams in Figure \ref{fig: 5Blup} 
we:

\begin{enumerate}[label=(\subscript{B}{\arabic*})]
    \item Mutated both $(0,1)$-nodes and applied one nodal trade, getting a $(1,1)$-node;         
    \item Mutated all three $(-1,-1)$-nodes.    
\end{enumerate}

To obtain $(C_1)$ ATDB we:

\begin{enumerate}[label=(\subscript{C}{\arabic*})]           
    \item Mutated both $(1,0)$-nodes.    
\end{enumerate}

\subsection{ATFs of $\BlVI$}

In Figure \ref{fig: 6Blup} we show how to get both ATBDs of Figure \ref{fig:
triangshape2}. We could have obtained an ATBD equivalent to the
ATBD $(B_2)$ directly from the ATBD $(A_3)$, but we will use the ATBD $(B_1)$
for blowup. We will also perform an almost toric blowup in the ATBD $(C_2)$.

The operations relating the $(A)$'s diagrams in Figure \ref{fig: 6Blup} are described below:

\begin{enumerate}[label=(\subscript{A}{\arabic*})]
    \item Blowup the corner of the ATBD $(C)$ of Figure \ref{fig: 5Blup};
    \item Applied one nodal trade, getting a $(1,0)$-node;         
    \item Mutated both $(-1,0)$-nodes.    
\end{enumerate}

Following the top arrow towards the $(B)$'s diagrams in Figure \ref{fig: 6Blup} 
we:

\begin{enumerate}[label=(\subscript{B}{\arabic*})]
    \item Mutated the $(0,1)$-node;         
    \item Mutated all three $(0,-1)$-nodes.    
\end{enumerate}

Following the bottom arrow from the $(A_3)$ ATBD towards the $(C)$'s diagrams in Figure \ref{fig: 6Blup} 
we:

\begin{enumerate}[label=(\subscript{C}{\arabic*})]
    \item Mutated only one $(0,-1)$-node;         
    \item Mutated all three $(-1,1)$-nodes.    
\end{enumerate}

\begin{figure}[h!]   
  
\begin{center} 
  
\centerline{\includegraphics[scale=0.4]{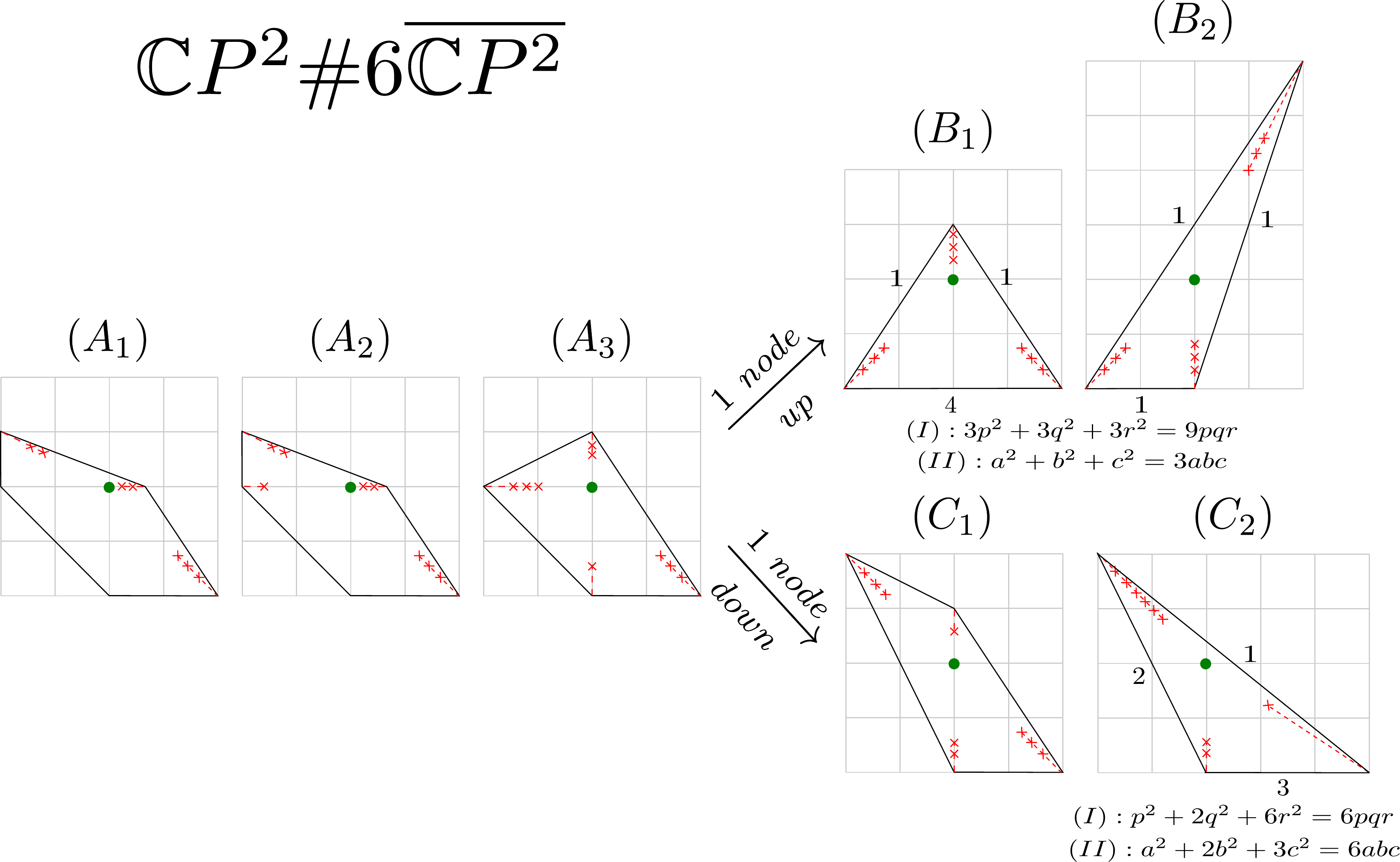}}

\caption{ATBDs of $\BlVI$. }
\label{fig: 6Blup} 

\end{center} 
\end{figure}

\subsection{Almost toric blowup} \label{subsec: ATBlup}

We will next perform a blowup now on a point in lying over a rank 1 elliptic
singularity of an ATF, corresponding to an edge of the ATBDs $(B_1)$ and $(C_2)$
in Figure \ref{fig: 6Blup}. That can be done while remaining in the almost toric
world.

\begin{figure}[h!]   
  
\begin{center} 
  
\centerline{\includegraphics[scale=0.35]{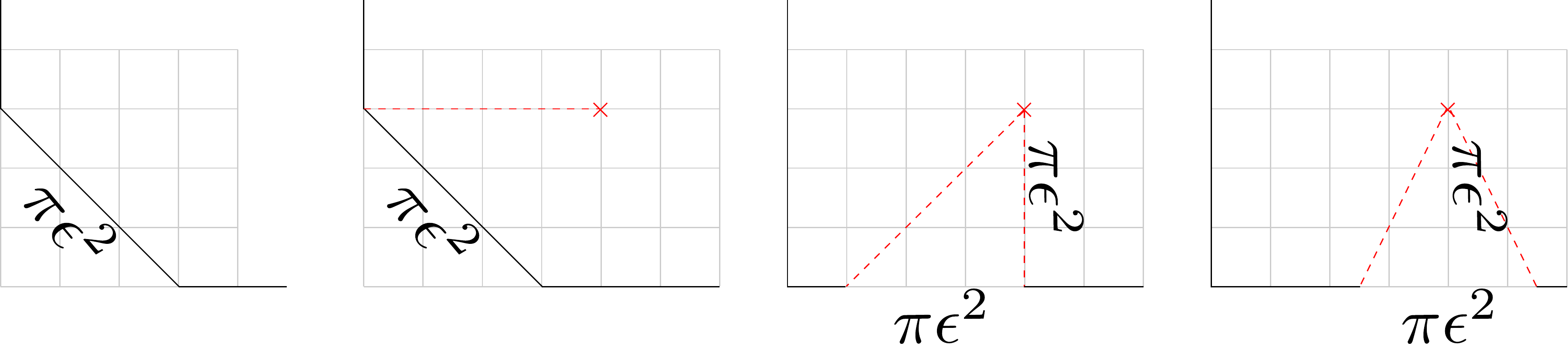}}

\caption{ATBDs on a toric blowup}
\label{fig: ATBlup0} 

\end{center} 
\end{figure}

We first point out that after a toric blowup (1st diagram of Figure \ref{fig:
ATBlup0}), we can get ATFs represented by the 2nd, 3rd and 4th ATBDs of Figure
\ref{fig: ATBlup0}. See also \cite[Figures 9,17]{Sy03}. Which makes us think
that we can get the 3rd and 4th ATBDs of Figure \ref{fig: ATBlup0}, by applying
a blowup on a point over the edge of the 1st ATBD. And indeed we can. In
\cite[Example 3.1.2]{Au09}, Auroux show how to construct an almost toric
fibration on the blowup of $\C^2$ over the point $(1,0)$, which lies on the
edge of the standard moment polytope of $\C^2$. We can than use this almost
toric fibration given on the neighborhood of the exceptional divisor as a local
model for what we call \emph{almost toric blowup}. The foollowing proposition
is an imediate consequence of \cite[Example 3.1.2]{Au09}.

\begin{prp}[\cite{Au09}(Example 3.1.2)] \label{prp: ATBlup0}
  
Consider the blowup at $(a,0) \subset \C^2$, with symplectic form
$\omega_\epsilon$, with respect to the standard ball of radius $\epsilon$. There
is an ATF on the blowup, with one nodal singularity whose monodromy's
eigendirection is parallel to the direction of the edge (image of the proper
transform of the $x$-axis). 
 
\end{prp}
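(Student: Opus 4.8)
The plan is to reproduce Auroux's construction \cite[Example~3.1.2]{Au09} explicitly and then read off the affine monodromy. Work in $\C^2$ with coordinates $(x,y)$ and the standard symplectic form, and use the Hamiltonian $S^1$-action $(x,y)\mapsto (e^{i\theta}x, e^{-i\theta}y)$ with moment map $\mu=\tfrac12(|x|^2-|y|^2)$. Since the product $xy$ is $S^1$-invariant, the function $\log|xy-\epsilon|$ Poisson-commutes with $\mu$, so
\begin{equation*}
F=\left(\tfrac12(|x|^2-|y|^2),\ \log|xy-\epsilon|\right)
\end{equation*}
is a singular Lagrangian fibration, whose regular fibres are the tori swept out by the $S^1$-orbits over the circles $\{|xy-\epsilon|=\mathrm{const}\}$ in each reduced surface $\mu^{-1}(c)/S^1$. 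I would realize the blowup at $(a,0)$ of radius $\epsilon$ as the relevant region of this model (equivalently, as the $S^1$-reduction picture in which the exceptional sphere acquires the area fixed by $\epsilon$), matching the fibration parameter $\epsilon$ to the blowup radius, exactly as in \cite{Au09}.

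Next I would locate and identify the node. The critical points of $F$ are where $d\mu\wedge dF_2=0$; since $dF_2\propto\mathrm{Re}\big(\tfrac{y\,dx+x\,dy}{xy-\epsilon}\big)$, a critical point forces $d(xy)=0$, i.e. $x=y=0$, which is the unique critical point and maps to the interior value $(0,\log\epsilon)$. Expanding near the origin gives $\log|xy-\epsilon|=\log\epsilon-\tfrac1\epsilon\,\mathrm{Re}(xy)+O(|(x,y)|^4)$, so up to an affine change of the base $F$ agrees with the standard focus--focus normal form $\big(\tfrac12(|x|^2-|y|^2),\ \mathrm{Re}(xy)\big)$. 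Hence the singular fibre is a once-pinched torus, the singularity is nodal (focus--focus), and there are no other singular fibres.

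Finally I would compute the monodromy and pin down its eigendirection. The vanishing cycle collapsing at the origin is the $S^1$-orbit $\gamma_1$, so the monodromy of a nearby regular fibre around $(0,\log\epsilon)$ is the Dehn twist about $\gamma_1$; dually, the monodromy-invariant (eigenvalue-one) line in the base is the $\mu=F_1$ direction. To match this to the edge I would use the key computation that along the $x$-axis $\{y=0\}$ one has $xy=0$, hence $|xy-\epsilon|=\epsilon$ and $F_2\equiv\log\epsilon$ is constant while $F_1=\tfrac12|x|^2$ varies: the $x$-axis (and, after the blowup, its proper transform near the node) therefore maps into the line $\{F_2=\log\epsilon\}$, whose direction is exactly the $F_1$-axis. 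Thus the eigenray is parallel to the edge, as claimed.

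The step I expect to be the main obstacle is this last matching. The focus--focus normal form makes the existence, uniqueness, and type of the node routine; the delicate part is carrying the affine structure and coordinate identification through the $S^1$-reduction so as to confirm simultaneously that the model really is the $\epsilon$-blowup at $(a,0)$ (one node, correct exceptional area) and that the computed eigendirection lines up with the edge direction rather than the transverse one. This bookkeeping is precisely what Auroux's explicit example supplies, which is why the proposition follows immediately from \cite[Example~3.1.2]{Au09}.
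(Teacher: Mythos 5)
The paper gives no proof of its own here beyond deferring to Auroux, so everything hinges on whether you have reconstructed the right example --- and you have not. The fibration you write down, $F=\bigl(\tfrac12(|x|^2-|y|^2),\,\log|xy-\epsilon|\bigr)$ with the anti-diagonal circle action, is Auroux's Example~3.1.1 (the fibration of $\C^2$ relative to the conic $\{xy=\epsilon\}$, the source of the Chekanov torus), not Example~3.1.2. Its total space is $\C^2$, and no region of $\C^2$ can be the $\epsilon$-blowup at $(a,0)$: a neighbourhood of the exceptional sphere has self-intersection $-1$ and boundary $S^3$, and a Mayer--Vietoris argument on $H_2$ shows such a neighbourhood cannot embed in $\C^2$. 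So the step you flag at the end as ``bookkeeping'' --- confirming that the model really is the $\epsilon$-blowup --- is not delicate but false, and the argument breaks exactly there. A symptom visible inside your own model: the line $\{F_2=\log\epsilon\}$ is not an edge of the base. Its preimage is the entire singular level $\{|xy-\epsilon|=\epsilon\}$ (not just the $x$-axis), and away from the node its fibres are regular tori; the genuine edge of your base is the image of the conic, sitting at $F_2=-\infty$, and that edge is \emph{transverse} to the eigenline $\{\mu=0\}$, which is the opposite of what the proposition asserts.

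What Auroux actually does in Example~3.1.2, and what the paper relies on (see in particular Proposition~\ref{prp: ATBlup1}), is this: lift the action $(x,y)\mapsto(x,e^{i\theta}y)$ to the blowup of $\C^2$ at $(a,0)$, equip it with an invariant form $\omega_\epsilon$ giving the exceptional divisor $E$ area $\pi\epsilon^2$ and agreeing with the standard form away from $E$, and fibre by $L_{r,\lambda}=\{|x|=r\}\cap\mu_\epsilon^{-1}(\lambda)$, where $\mu_\epsilon$ is the moment map of the lifted action. The proper transform of $\{y=0\}$ is the fixed locus $\{\mu_\epsilon=0\}$ and maps onto the edge; the unique focus--focus point is the isolated fixed point of the action on $E$; and since $\mu_\epsilon$ is a globally defined action coordinate whose orbit class is precisely the vanishing cycle at that node, the monodromy-invariant tangent direction at the node is $\ker d\mu_\epsilon$, i.e.\ the direction of the level sets of $\mu_\epsilon$ --- parallel to the edge $\{\mu_\epsilon=0\}$, as claimed. (In your model the same reasoning puts the eigenline along a level set of $\tfrac12(|x|^2-|y|^2)$, transverse to the conic edge.) Note also that these $L_{r,\lambda}$ literally coincide with product tori outside a neighbourhood of $E$, which is exactly what Proposition~\ref{prp: ATBlup1} needs and what your tori $\{|xy-\epsilon|=r\}\cap\mu^{-1}(\lambda)$ never do. Your local focus--focus normal-form analysis is fine as far as it goes, but it is applied to the wrong fibration on the wrong space.
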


The exeptional divisor lives over the segment $R$ on the base of the ATF
connecting the node to the edge, in direction ``orthogonal to the edge''. Let
$\mathcal{N}$ be the pre-image of a neighbourhood of the segment $R$. We can
identify the complement of $\mathcal{N}$ with the complement of the pre-image
of a similar neighbourhood on the toric moment polytope of $\C^2$, as in Figure
\ref{fig: ATBlup1}.

 \begin{figure}[h!]   
  
\begin{center} 
  
\centerline{\includegraphics[scale=0.35]{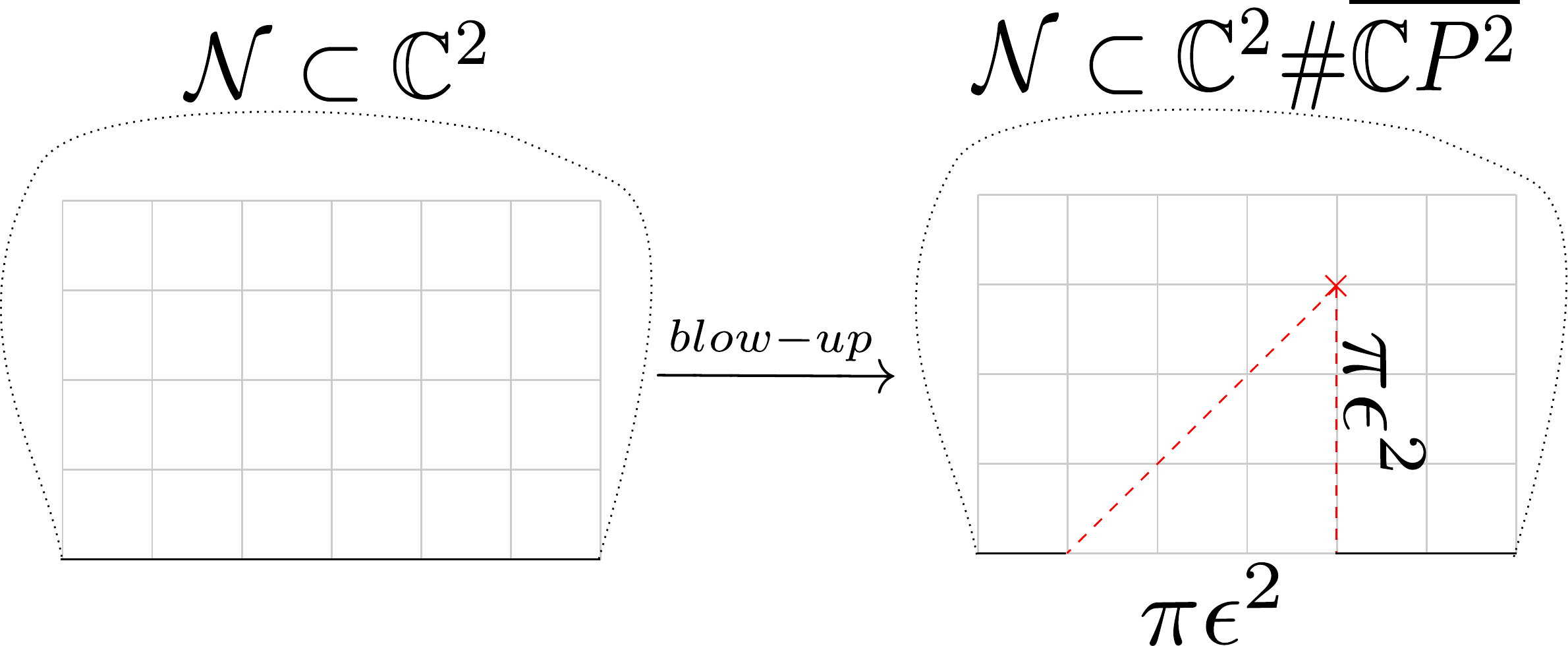}}

\caption{Almost toric blowup.}
\label{fig: ATBlup1} 

\end{center} 
\end{figure}

\begin{prp} \label{prp: ATBlup1}
  
Upon the above identification, the ATF of Proposition \ref{prp: ATBlup0} can be
made to agree with the toric one of $\C^2$ outside $\mathcal{N}$.

\end{prp}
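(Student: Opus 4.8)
The plan is to exploit the fact that both modifications involved here --- the change in the symplectic form produced by the blowup, and the change in the Lagrangian fibration produced by passing to Auroux's almost toric model --- are supported in an arbitrarily small region, and then to absorb both of them into the neighbourhood $\mathcal{N}$. First I would recall that a symplectic blowup of size $\epsilon$ at $(a,0)$ is by construction a local surgery: one excises the standard ball $B(\epsilon)$ and collapses the Hopf fibres of $\del B(\epsilon)$, so that outside $B(\epsilon)$ the blown-up manifold is canonically symplectomorphic to $\C^2 \setminus B(\epsilon)$ with its standard form. Thus, as far as the symplectic structure alone is concerned, choosing $\epsilon$ small and $\mathcal{N}$ so that $B(\epsilon) \subset \mathcal{N}$ already forces the symplectic forms to agree outside $\mathcal{N}$; the only genuine content of the statement is to make the \emph{fibrations} agree there as well.

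Next I would examine the almost toric fibration of Proposition \ref{prp: ATBlup0}. In Auroux's explicit model \cite[Example 3.1.2]{Au09} the single node, together with the deformation of the fibres away from the standard product tori, is concentrated near the exceptional divisor, that is, near the segment $R$; away from a compact neighbourhood of $R$ the fibres are honest standard Lagrangian tori and the base carries the standard integral affine structure of the moment polytope of $\C^2$. I would then enlarge $\mathcal{N}$, if necessary, so that its pre-image contains both $B(\epsilon)$ and the entire region where Auroux's fibration differs from the standard toric torus fibration. This is possible because both regions are compact and can be taken arbitrarily close to $R$, while the identification of the complement of $\mathcal{N}$ with the complement of the corresponding neighbourhood in the moment polytope of $\C^2$ has already been fixed before the statement.

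Finally, the point to be verified --- and the main obstacle --- is that outside $\mathcal{N}$ the two Lagrangian torus fibrations coincide on the nose and not merely up to isotopy. Here I would invoke the Arnold--Liouville--Duistermaat action--angle theorem: on the common region outside $\mathcal{N}$ both are regular Lagrangian torus fibrations inducing the \emph{same} integral affine structure on the base, precisely because the ATBD of the blowup agrees with the moment polytope of $\C^2$ away from the cut along $R$ under the chosen identification. A regular Lagrangian fibration over such a region is determined up to fibre-preserving symplectomorphism by this affine base data, so there is a symplectomorphism carrying Auroux's fibres to the toric ones. A Moser-type argument then lets me arrange this symplectomorphism to be the identity near the outer boundary of $\mathcal{N}$, so that it can be used to redefine the almost toric fibration on $\mathcal{N}$ and glue it to the standard toric fibration outside, yielding an ATF that literally agrees with the toric one of $\C^2$ on the complement of $\mathcal{N}$. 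The delicate step is controlling this fibre-preserving symplectomorphism near $\del \mathcal{N}$ so that the gluing is smooth and preserves both the symplectic form and the nodal fibre sitting inside $\mathcal{N}$; everything else is bookkeeping built on the locality of the blowup and the explicit form of Auroux's model.
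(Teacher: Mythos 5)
Your first two paragraphs are exactly the paper's argument: the blowup is a local surgery, so $\omega_\epsilon$ agrees with the standard form of $\C^2$ outside a neighbourhood of the ball $B(\epsilon)$ centered at $(a,0)$, and Auroux's model concentrates all the deviation from the toric picture near the exceptional divisor, i.e.\ near the segment $R$. Where you diverge is in treating the literal coincidence of the two fibrations outside $\mathcal{N}$ as a remaining ``main obstacle'' to be overcome with the Arnold--Liouville--Duistermaat theorem plus a Moser-type gluing. The paper does not need (and does not use) any of that: the tori $L_{r,\lambda}$ of \cite[Example 3.1.2]{Au09} are written down explicitly, and in the region where $\omega_\epsilon$ is standard they \emph{are} the standard product tori of $\C^2$, not merely tori inducing the same integral affine structure on the base. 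So the fibrations agree on the nose by construction, and the proof ends where your second paragraph ends. Your extra step is not wrong --- classifying regular Lagrangian torus fibrations by their affine base data and then cutting off the resulting fibre-preserving symplectomorphism is a legitimate, more general route, and it would be the right move if one only knew the ATBD rather than the explicit fibres --- but it introduces a genuinely delicate interpolation near $\del\mathcal{N}$ (which you rightly flag) that the explicit model lets you avoid entirely. If you do keep that route, you should also check that the cut-off symplectomorphism does not disturb the nodal fibre inside $\mathcal{N}$; citing the explicit form of $L_{r,\lambda}$ sidesteps all of this.
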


\begin{proof} This follows from the fact that the symplectic form
$\omega_\epsilon$ agrees with the standard symplectic for of $\C^2$ outiside a
neighbourhood of the ball of radious $\epsilon$ centered at $(a,0)$. In that
region, the tori $L_{r,\lambda}$ described by Auroux in \cite[Example 3.1.2]{Au09}
coincides with the standard product torus in $\C^2$, i.e., outiside some
neighbourhood $\mathcal{N}$ as above, the fibres of the ATF
of the blowup are identified with the fibres of the standard moment polytope of
$\C^2$.
\end{proof}

Consider a rank 1 elliptic singularity $p$ of an ATF, lying over an edge of an
ATBD. Assume we have segment $R$ starting at (the image of) $p$ embedded inside
the ATBD (not crossing a cut or node), orthogonal to the edge containing $p$ and
of area $\pi \epsilon^2$. Also assume that the area of the edge to the left (or
right) of $R$ is bigger than $\pi\epsilon^2$ and let $q$ be a point on the edge
of distance $\pi\epsilon^2$ from (the image of) $p$. Let $S$ be the segment
uniting the end point of $R$ with $q$. Consider a symplectomorphism from an
almost toric neighborhood of $p$ to the toric neighborhood of a point $(a,0)
\subset \C^2$. From Propositions \ref{prp: ATBlup0}, \ref{prp: ATBlup1} we have:
  
\begin{prp} \label{prp: ATBlupFinal}
  There is an ATF on the $\epsilon$-blowup at the point $p$, with ATBD given 
  by replacing a neighbourhood of the base containing the segments $R$ and $S$,
  by an ATBD with cuts over $R$ and $S$ and a node in their intersection point. 
\end{prp}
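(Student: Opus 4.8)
The plan is to exploit the strictly local nature of the symplectic blowup and to import Auroux's local model (Proposition~\ref{prp: ATBlup0}) through the given symplectomorphism, gluing it to the ambient fibration by means of Proposition~\ref{prp: ATBlup1}. First I would localise the construction: the $\epsilon$-blowup at $p$ alters $(X,\omega)$ only inside the standard ball $B(\epsilon)$ centred at $p$, leaving both manifold and form untouched outside. The hypotheses are exactly what make this ball embed compatibly — $R$ has area $\pi\epsilon^2$ and meets no cut or node, while the portion of the edge on the relevant side of $R$ has area exceeding $\pi\epsilon^2$ — so that $B(\epsilon)$ fits inside the chosen almost toric neighbourhood of $p$ without colliding with any other singular fibre.

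Next I would transport the construction to $\C^2$. Because $p$ is a rank~$1$ elliptic point, a neighbourhood of $p$ is genuinely toric, so the given symplectomorphism onto a toric neighbourhood of $(a,0)\subset\C^2$ — with $a$ chosen so that $\pi a^2$ records the position of $p$ along the edge — may be taken to intertwine the two local toric fibrations. Under this identification the $\epsilon$-blowup at $p$ becomes the $\epsilon$-blowup at $(a,0)$, and Proposition~\ref{prp: ATBlup0} endows it with an ATF carrying a single node, the exceptional divisor projecting to the segment $R$.

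The step I expect to carry the real weight is the gluing. By Proposition~\ref{prp: ATBlup1} the local ATF agrees with the standard toric fibration of $\C^2$ outside the neighbourhood $\mathcal{N}$ of $R$; pulling this back, the only region where the new fibration differs from the original ATF of $X$ is contained in (the preimage of) the neighbourhood swept out by $R$ and $S$. One must verify that outside this region the two fibrations genuinely coincide — that the symplectomorphism intertwines both the fibres and the induced integral affine charts on the overlap — so that they patch to a global ATF on the blowup $\widetilde{X}$. This matching is precisely what Proposition~\ref{prp: ATBlup1} delivers, so the obstacle is dissolved by the preceding results; the remaining care is to arrange $\mathcal{N}$ to lie inside the neighbourhood being replaced and to keep track of the images of $R$ and $S$.

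Finally I would read off the base diagram. Since the two fibrations agree outside the replaced neighbourhood, the ATBD of $\widetilde{X}$ is the original one with that neighbourhood excised and Auroux's local diagram inserted in its place: a single node at the common interior endpoint of $R$ and $S$, with cuts drawn over $R$ and $S$, exactly as in the statement. The area bookkeeping pins down the rest — the affine length $\pi\epsilon^2$ of $R$ records that the exceptional sphere has symplectic area $\pi\epsilon^2$, and fixing the edge segment $[p,q]$ to have length $\pi\epsilon^2$ locates $q$ and hence the segment $S$.
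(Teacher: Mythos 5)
Your proposal is correct and follows essentially the same route as the paper, which deduces the proposition directly from Propositions \ref{prp: ATBlup0} and \ref{prp: ATBlup1}: transport Auroux's local model for the blowup at $(a,0)\subset\C^2$ through the given symplectomorphism, use the agreement with the standard toric fibration outside $\mathcal{N}$ to glue, and read off the resulting base diagram with the node at the end of $R$ and cuts over $R$ and $S$. You simply spell out the localisation and gluing steps that the paper leaves implicit.
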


\begin{dfn}
  We say that the ATBD on the blowup of Proposition \ref{prp: ATBlupFinal}, is 
  obtained from the previous one via a blowup of length $\pi \epsilon^2$.
\end{dfn}

\subsection{ATFs of $\BlVII$}

In the remaining sections, we refer to the length of an almost toric blowup in a
given ATBD according to the grid depicted. Recall that the invariant direction
of the monodromy is paralell to the edge containing the point we blowup.

To get the ATBDs $(A_1)$ and $(D_1)$ in Figure \ref{fig: 7Blup}, we apply an
almost toric blowup of length $2$ to the ATBDs $(B_1)$ and $(C_2)$ of Figure
\ref{fig: 6Blup}. Since $2$ is the distance from the monotone fiber to the
bottom edge, it is the area of an Maslov 2 disk lying over the vertical segment.
Because it is equal to the area of the exceptional curve, we remain monotone. 

After several mutations, we are able to get the ATBDs of Figure \ref{fig:
triangshape3}. As usual, we get ATBDs $(B_3)$ and $(C_2)$ to have space for the
next blowups.

The operations relating the $(A)$'s diagrams in Figure \ref{fig: 7Blup} are described below:

\begin{enumerate}[label=(\subscript{A}{\arabic*})]
    \item Applied an almost toric blowup of length 2 in the edge of the ATBD $(B_1)$ of 
    Figure \ref{fig: 6Blup};
    \item Transferred the cut towards the right edge, getting a $(-1,0)$-node;            
\end{enumerate}

Following the top arrow towards the $(B)$'s diagrams in Figure \ref{fig: 7Blup} 
we:

\begin{enumerate}[label=(\subscript{B}{\arabic*})]
    \item Mutated only one $(0,-1)$-node;         
    \item Mutated the $(-1,-1)$-node;
    \item Mutated all four $(0,1)$-nodes.    
\end{enumerate}

Following the bottom arrow from the $(A_2)$ ATBD towards the $(C)$'s diagrams in
Figure \ref{fig: 7Blup} we:

\begin{enumerate}[label=(\subscript{C}{\arabic*})]
    \item Mutated all three $(0,-1)$-nodes;         
    \item Mutated all six $(0,1)$-nodes.    
\end{enumerate}
 
Now we describe the operations relating the $(D)$'s diagrams in Figure \ref{fig: 7Blup}: 

\begin{enumerate}[label=(\subscript{D}{\arabic*})]
    \item Applied an almost toric blowup of length 2 in the edge of the ATBD $(C_2)$ of 
    Figure \ref{fig: 6Blup};
    \item Transferred the cut towards the left edge, getting a $(1,0)$-node;
    \item Mutated all six $(1,-1)$-nodes.            
\end{enumerate}
 
\begin{figure}[h!]   
  
\begin{center} 
  
\centerline{\includegraphics[scale=0.55]{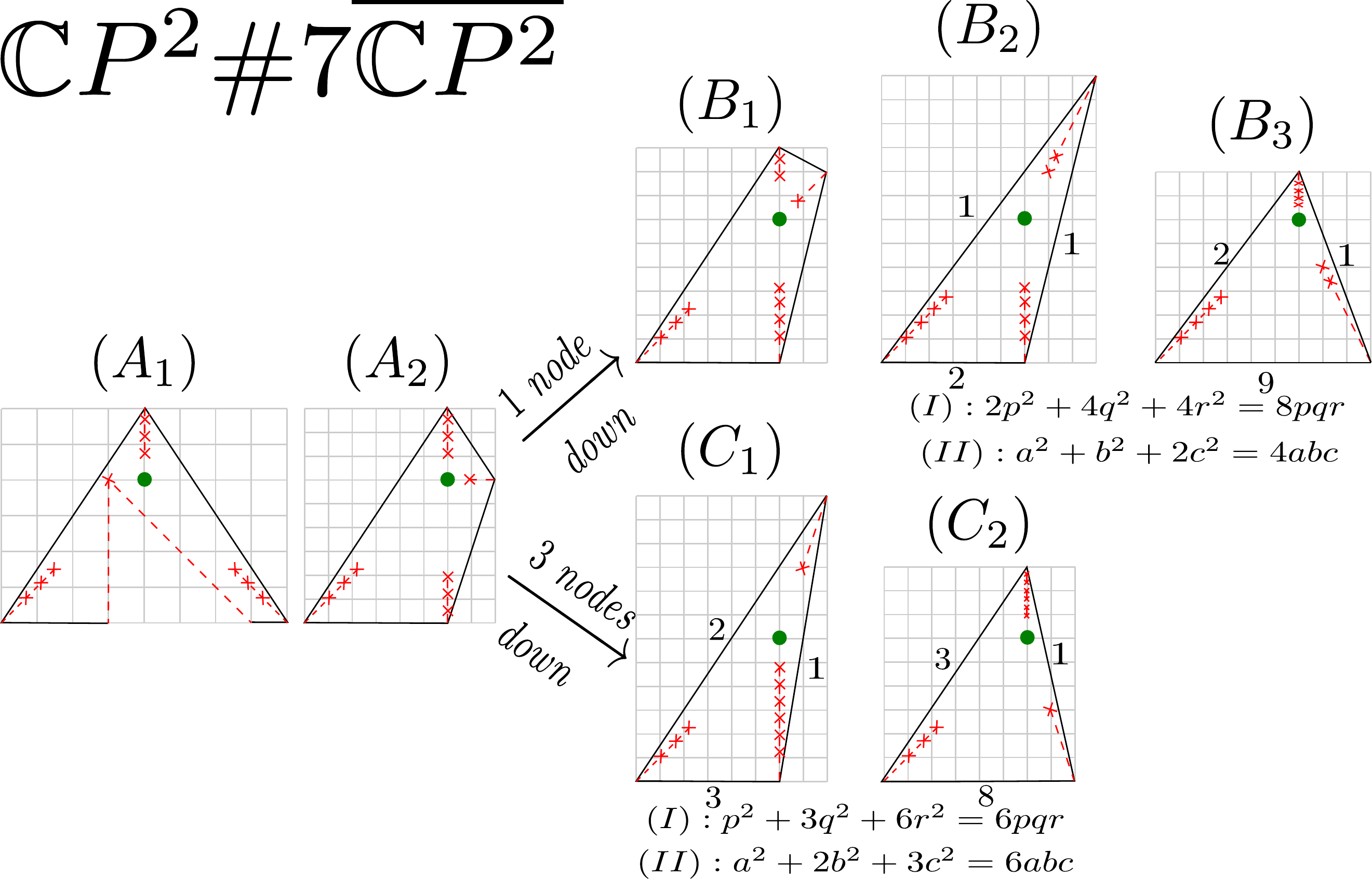}}
\vspace{0.5cm}
\centerline{\includegraphics[scale=0.4]{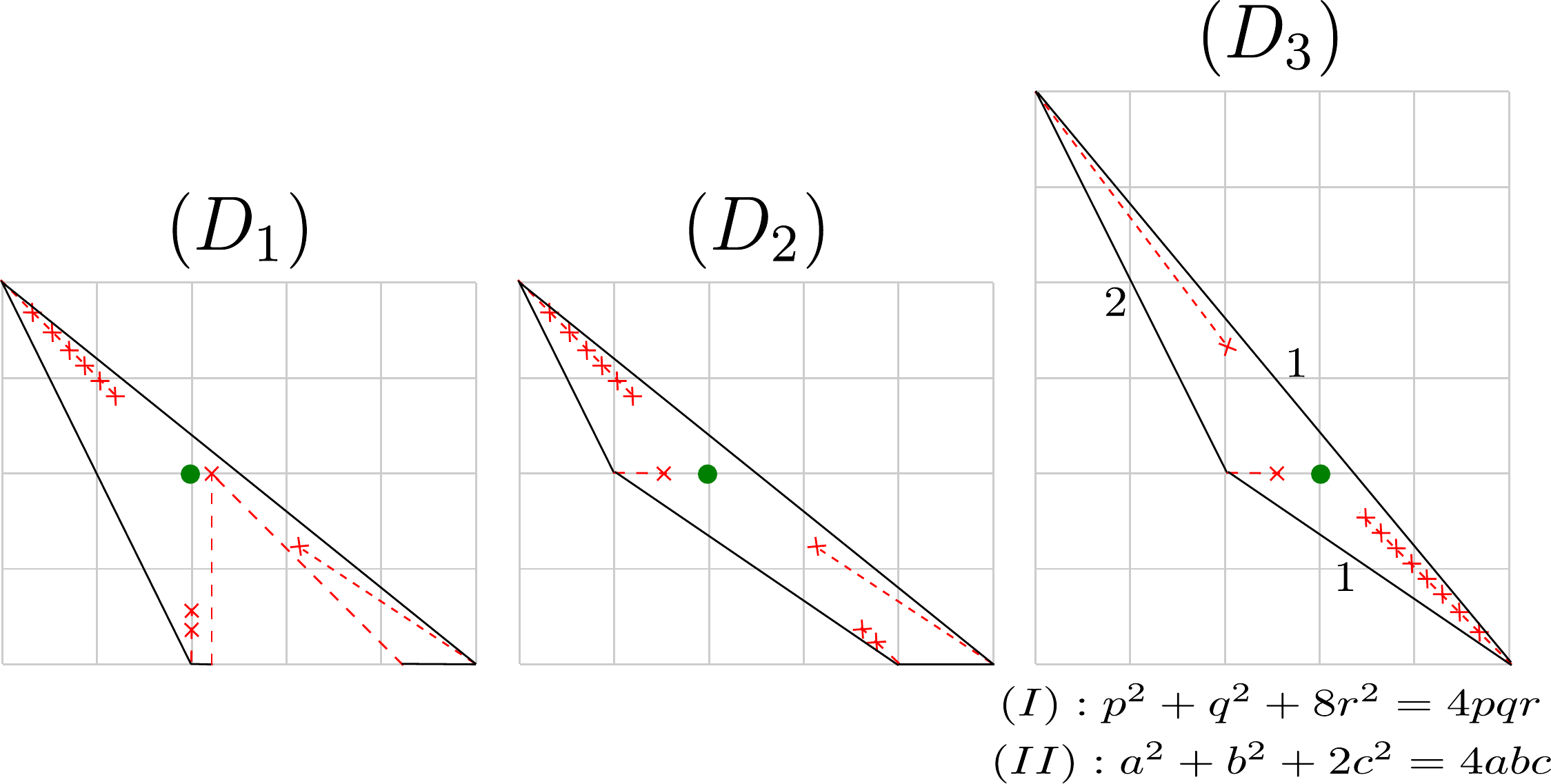}}
\caption{ATBDs of $\BlVII$. }
\label{fig: 7Blup} 

\end{center} 
\end{figure}

\newpage
\subsection{ATFs of $\BlVIII$}

We again blowup on edges of different ATBDs of $\BlVII$, namely $(B_3)$ and
$(C_2)$ in Figure \ref{fig: 7Blup}. Note that the blowup was not made in the
precise format given on Figure \ref{fig: ATBD2Blup}, but it is equivalent to an
honest one (up to nodal slide and change of direction of the cut). After
mutations we get the ATBDs of Figure \ref{fig: triangshape4}.

\begin{figure}[h!]   
  
\begin{center} 
  
\centerline{\includegraphics[scale=0.6]{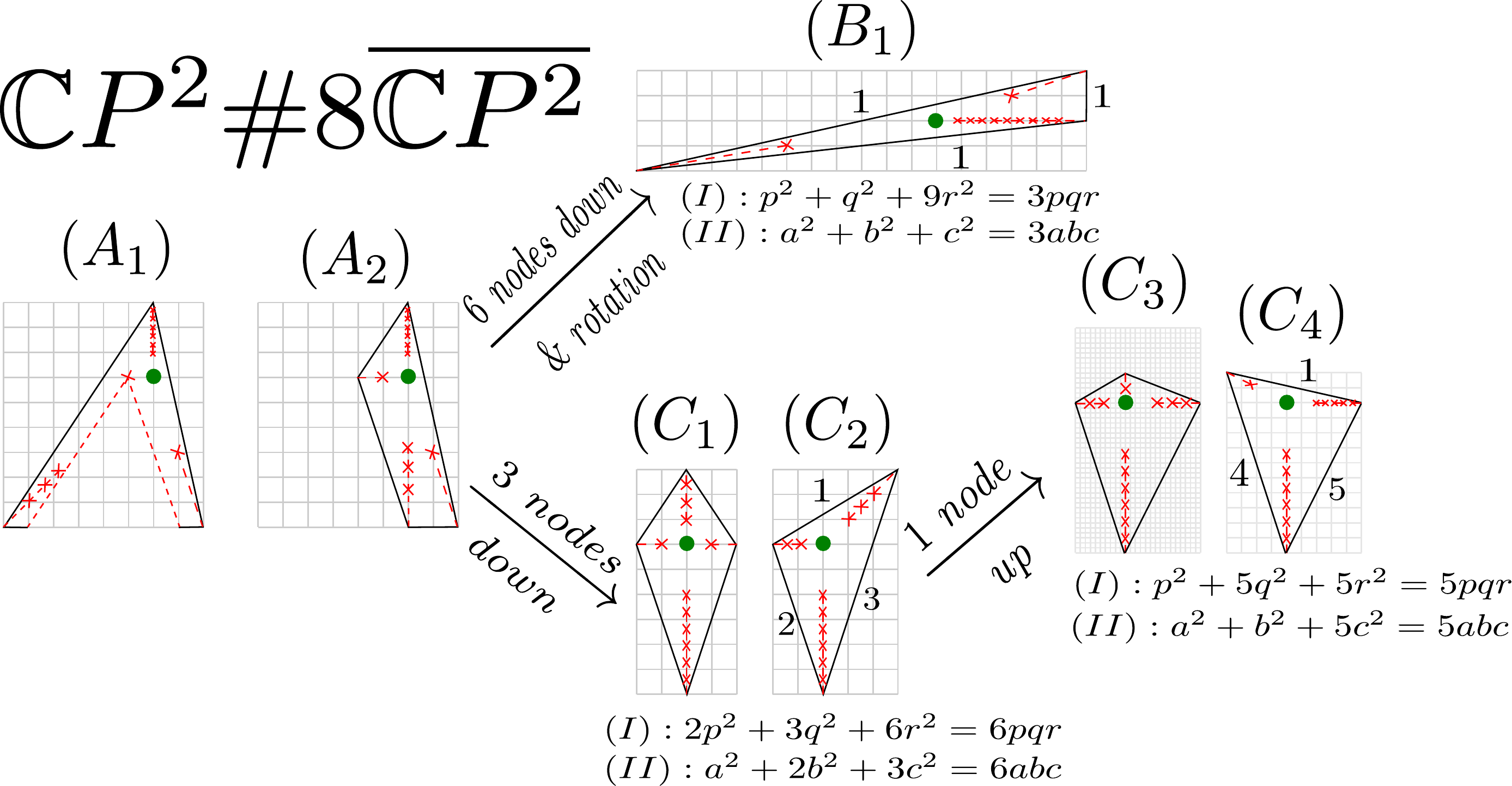}}
\vspace{0.5cm}
\centerline{\includegraphics[scale=0.53]{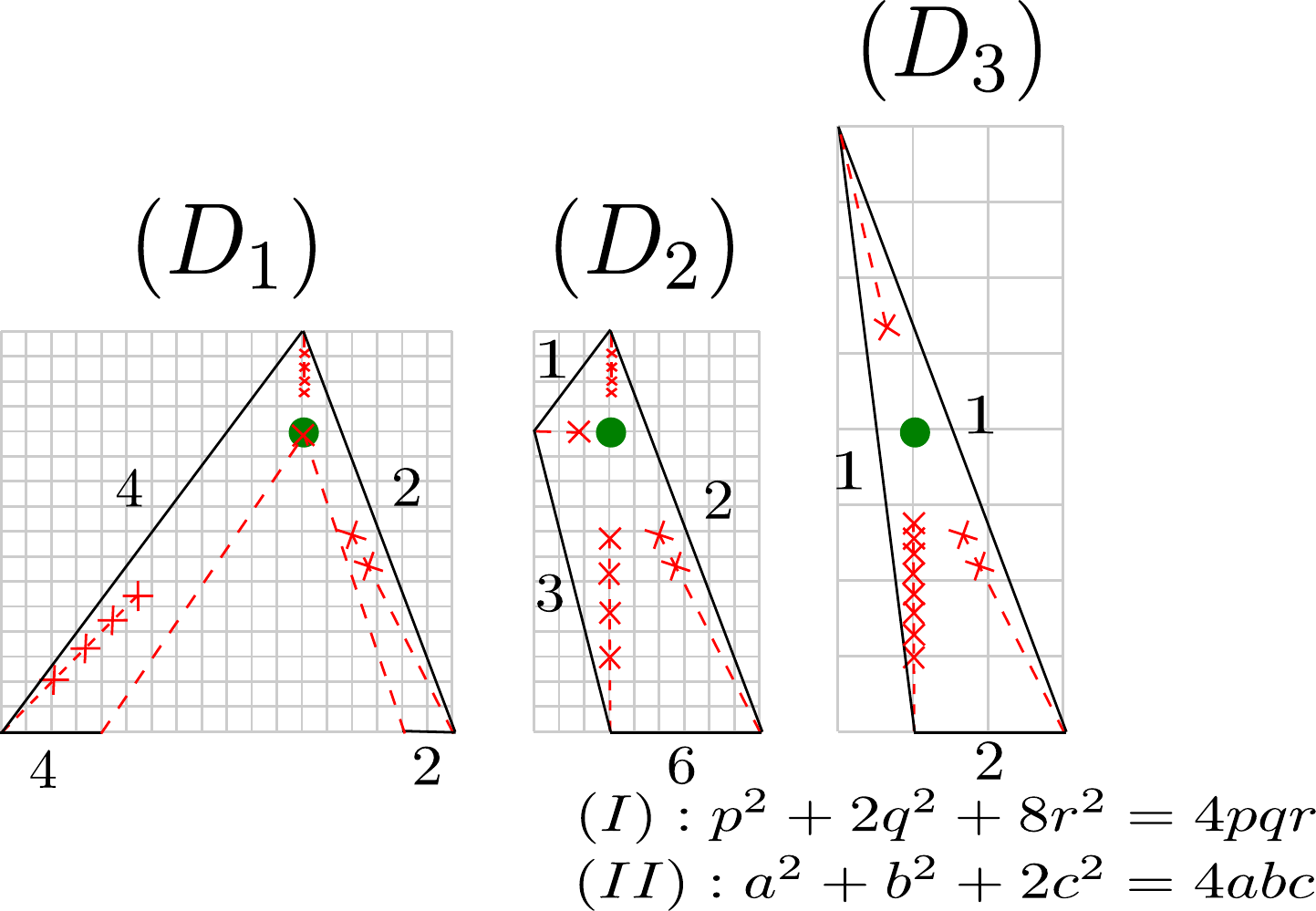}}

\caption{ATBDs of $\BlVIII$. }
\label{fig: 8Blup} 

\end{center} 
\end{figure}
   
The operations relating the $(A)$'s diagrams in Figure \ref{fig: 8Blup} are described below:

\begin{enumerate}[label=(\subscript{A}{\arabic*})]
    \item Applied an almost toric blowup of length 6 in the edge of the ATBD $(C_2)$ of 
    Figure \ref{fig: 7Blup};
    \item Transferred the cut towards the left edge, getting a $(1,0)$-node;            
\end{enumerate}

Following the top arrow towards the $(B)$'s diagrams in Figure \ref{fig: 8Blup} 
we:

\begin{enumerate}[label=(\subscript{B}{\arabic*})]
    \item Mutated all six $(0,-1)$-nodes and applied the
counter-clockwise $\pi/2$ rotation $(\in SL(2,\Z))$.             
\end{enumerate}

Following the bottom arrow from the $(A_2)$ ATBD towards the $(C)$'s diagrams in
Figure \ref{fig: 8Blup} we:

\begin{enumerate}[label=(\subscript{C}{\arabic*})]
    \item Mutated only three $(0,-1)$-nodes;         
    \item Mutated the $(-1,0)$-node;
    \item Mutated only one $(0,1)$-node;
     \item Mutated both $(1,0)$-nodes.    
\end{enumerate}
 
Finally, we describe the operations relating the $(D)$'s diagrams in Figure \ref{fig: 8Blup}: 

\begin{enumerate}[label=(\subscript{D}{\arabic*})]
    \item Applied an almost toric blowup of length 6 in the edge of the ATBD $(B_3)$ of 
    Figure \ref{fig: 6Blup} (the grid was refined so the blowup has length 12 on the new grid);
    \item Transferred the cut towards the left edge, getting a $(1,0)$-node;
    \item Mutated all four $(0,-1)$-nodes.            
\end{enumerate}



\subsection{ATFs of $\PxP$}
We finish by describing the ATBD of triangular shape for $\PxP$ appearring in
Figure \ref{fig: triangshape1}. Apply the counter-clockwise $\pi/2$ rotation
$(\in SL(2,\Z))$ to the ATBD $(A_3)$ of Figure \ref{fig: PxP} and get the ATBD
in Figure \ref{fig: triangshape1}. 

\begin{figure}[h!]   
  
\begin{center} 
  
\centerline{\includegraphics[scale=0.4]{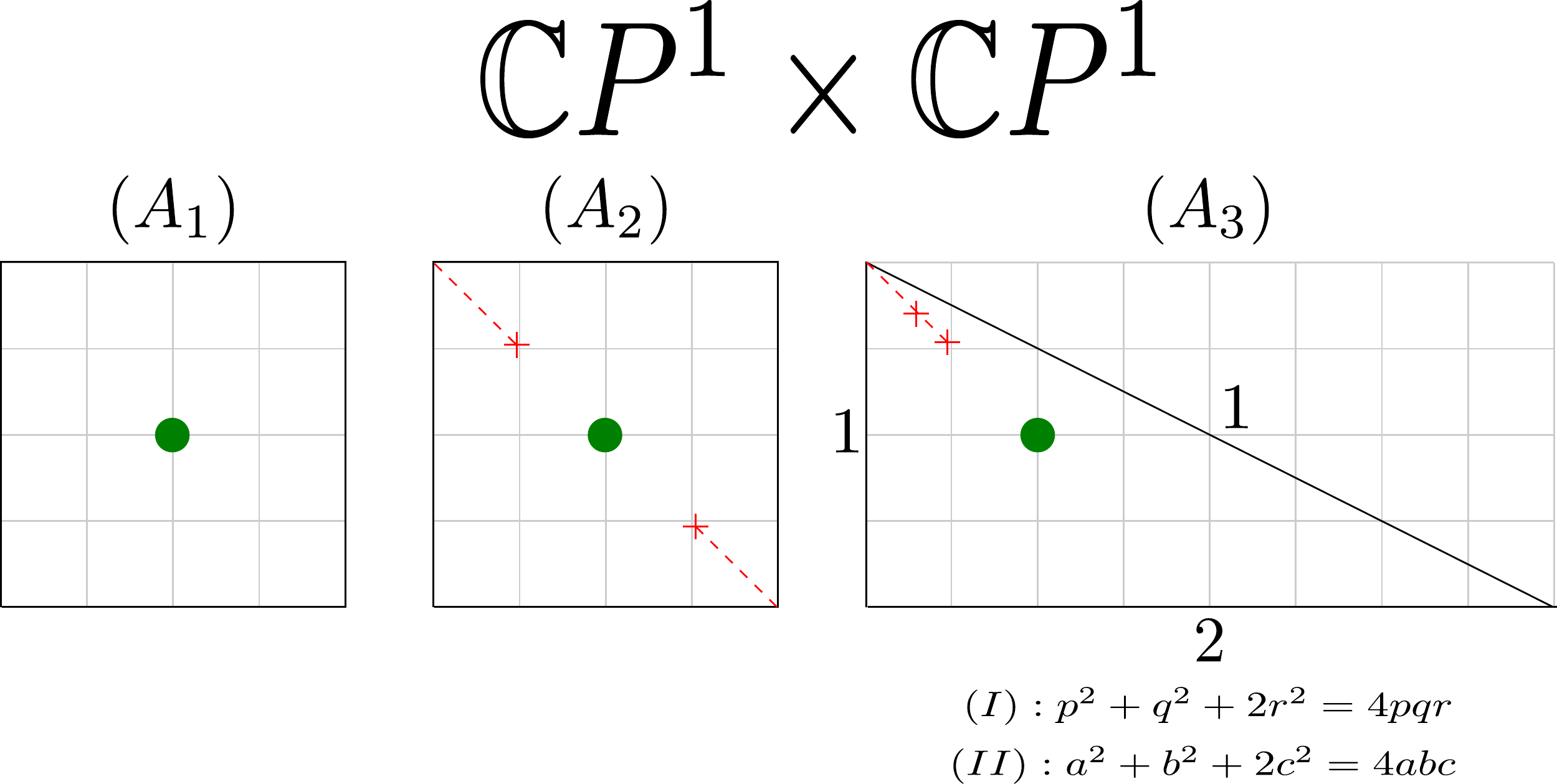}}

\caption{ATBDs of $\PxP$. }
\label{fig: PxP} 

\end{center} 
\end{figure}

\begin{enumerate}[label=(\subscript{A}{\arabic*})]
    \item Standard moment polytope of $\PxP$;
    \item Applied two nodal trades, getting a $(-1,1)$ and $(1,-1)$ nodes;
    \item Mutated the $(-1,1)$-node.            
\end{enumerate}

\section{Mutations and ATBDs of triangular shape} \label{sec: Mutation}

Let $\Delta$ be an ATBD of length type $(k_1a^2,k_2b^2,k_3c^2)$, where $(a,b,c)$
are Markov triples for equation \eqref{eq: Markovtype}. Assume that $\Delta$ has
a monotone fiber (not lying over a cut). Let $\bu_1, \bu_2, \bu_3$ be
primitive vectors in the direction of the edges of $\Delta$, so that $
k_1a^2\bu_1 + k_2 b^2\bu_2 + k_3 c^2\bu_3 = 0$. Up to $SL(2,\Z)$, we can assume
that $\bu_3 = (1,0)$. Let $\bw_i$ be the direction of the cut pointing towards 
the edge whose direction is $\bu_i$, see Figure \ref{fig: Delta}. Let $n_i$ be
the number of nodes in the cut $\bw_i$. Write $\bw_1 = (x,p)$ and $\bw_2 = (y,q)$.

\begin{figure}
\begin{center} 
  
\centerline{\includegraphics[scale=0.7]{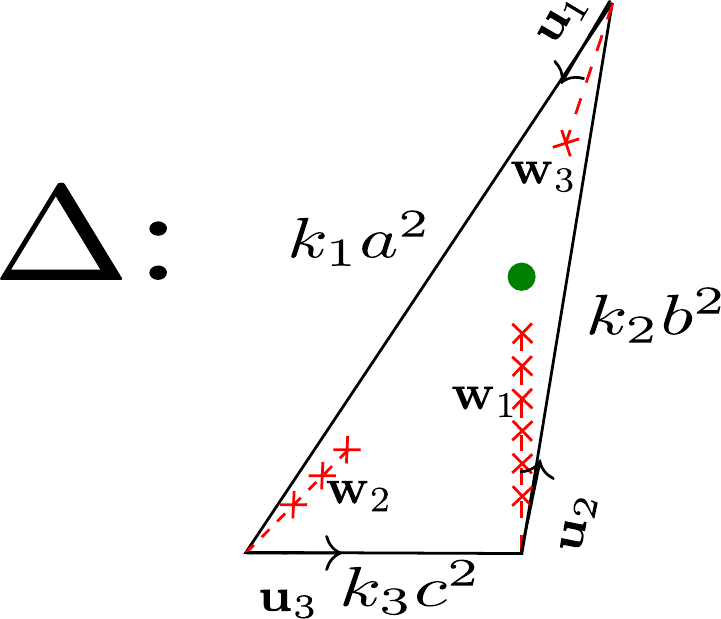}}
\caption{ATBD $\Delta$. }
\label{fig: Delta} 

\end{center} 
\end{figure}

The monodromy around a clockwise oriented loop surrounding $n$ nodes with
eigendirection $(s,t)$ is given by \cite[(4.11)]{Sy03}:

\begin{equation} \label{eq: monodromy}
  \begin{bmatrix} 1 - st & s^2 \\ -t^2 & 1+st \end{bmatrix}^n = 
\begin{bmatrix} 1 - nst & ns^2 \\ -nt^2 & 1 + nst  \end{bmatrix}
\end{equation}

So we have $\bu_1 = (1 - n_2yq, -n_2q^2)$ and $\bu_2 = (1 + n_1xp, n_1p^2)$. Note
that $n_1p^2 = |\bu_1 \wedge \bu_3|$ and $n_2q^2 = |\bu_3 \wedge \bu_2|$, which is
invariant under $SL(2,\Z)$. So, $|\bu_2 \wedge \bu_1| =  n_3 r^2$ for some $r \in \Z$.

\begin{prp} \label{prp: lambda}
  We have that:
  \begin{equation} \label{eq: lambda0}
    \frac{n_1p^2}{k_1a^2} = \frac{n_2q^2}{k_2b^2} = \frac{n_3r^2}{k_3c^2}
  \end{equation}
  We denote the above value by $\lambda$.
\end{prp}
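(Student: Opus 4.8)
The plan is to extract all three ratios from the single linear relation that the triangular shape imposes on the edge vectors, using the wedge (signed-area) product as the device that isolates one ratio at a time. Since $\Delta$ has length type $(k_1a^2,k_2b^2,k_3c^2)$, its oriented edge vectors close up, and after absorbing the common proportionality factor this is exactly the relation recorded in the setup,
\begin{equation*}
k_1a^2\bu_1 + k_2b^2\bu_2 + k_3c^2\bu_3 = 0. \tag{$\ast$}
\end{equation*}
Beyond $(\ast)$, the only input I need is the three signed areas $\bu_i\wedge\bu_j$. From the explicit monodromy expressions $\bu_1=(1-n_2yq,-n_2q^2)$, $\bu_2=(1+n_1xp,n_1p^2)$, $\bu_3=(1,0)$ one computes directly $\bu_1\wedge\bu_3=n_2q^2$, $\bu_2\wedge\bu_3=-n_1p^2$, and $\bu_2\wedge\bu_1=n_3r^2$; these magnitudes are the $SL(2,\Z)$-invariant node-type quantities.

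First I would wedge $(\ast)$ on the left with $\bu_3$. The self-wedge $\bu_3\wedge\bu_3=0$ kills the $k_3c^2$-term, leaving $k_1a^2(\bu_1\wedge\bu_3)+k_2b^2(\bu_2\wedge\bu_3)=0$; substituting the values above gives $k_1a^2\,n_2q^2=k_2b^2\,n_1p^2$, that is $\frac{n_1p^2}{k_1a^2}=\frac{n_2q^2}{k_2b^2}$. Next I would wedge $(\ast)$ with $\bu_1$, which annihilates the $k_1a^2$-term and yields $k_2b^2(\bu_1\wedge\bu_2)+k_3c^2(\bu_1\wedge\bu_3)=0$; using $|\bu_1\wedge\bu_2|=n_3r^2$ and $\bu_1\wedge\bu_3=n_2q^2$ this becomes $k_2b^2\,n_3r^2=k_3c^2\,n_2q^2$, i.e. $\frac{n_2q^2}{k_2b^2}=\frac{n_3r^2}{k_3c^2}$. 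Chaining the two equalities gives the common value, which I name $\lambda$, and completes the proof. Wedging $(\ast)$ with $\bu_2$ instead produces $\frac{n_1p^2}{k_1a^2}=\frac{n_3r^2}{k_3c^2}$ directly, and I would include it as a consistency check.

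The one step deserving genuine care — and the only place the argument could go wrong — is the sign bookkeeping. Each wedging produces a relation of the form (positive)$\,\cdot(\bu_i\wedge\bu_j)+$(positive)$\,\cdot(\bu_i\wedge\bu_k)=0$, so the two determinants are forced to have opposite signs, and I must check that the orientation underlying $(\ast)$ (the consistent traversal of the triangle) matches the signs coming from the explicit coordinate expressions for the $\bu_i$. Once the signs are pinned down, passing to absolute values turns each two-term relation into an equality between ratios of the strictly positive quantities $n_ip^2$ and $k_ia^2$, so no positivity issue ever arises. The computation is genuinely short: all the content sits in the closing relation $(\ast)$ together with the node-type determinant identities, and wedging is exactly the operation that separates the three ratios.
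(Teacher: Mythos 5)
Your proof is correct and is essentially the paper's argument in coordinate-free form: wedging the closing relation with $\bu_3=(1,0)$ is exactly reading off its second component, which is how the paper obtains the first equality ``immediately'', and wedging with $\bu_1$ replaces the paper's ``apply an $SL(2,\Z)$ map and repeat'' step for the third ratio. Your sign bookkeeping via the two-term relations is sound, and your computed values $\bu_1\wedge\bu_3=n_2q^2$ and $\bu_2\wedge\bu_3=-n_1p^2$ are the correct ones (the paper's setup transposes the indices of these two determinants, a typo your calculation silently corrects).
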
 

\begin{proof}
  Follows imediataly from $k_1a^2\bu_1 + k_2 b^2\bu_2 + k_3 c^2\bu_3 = 0$, that $\frac{n_1p^2}{k_1a^2} = 
  \frac{n_2q^2}{k_2b^2}$. Apply a $SL(2,\Z)$ map to conclude that it is also equal to $ 
  \frac{n_3r^2}{k_3c^2}$.
\end{proof}

It also follows from from $k_1a^2\bu_1 + k_2 b^2\bu_2 + k_3 c^2\bu_3 = 0$ and
the Markov type II equation that:

\begin{equation} \label{eq: lem1}
  Kk_1k_2k_3abc = k_1 a^2 + k_2b^2 + k_3c^2 = n_2yqk_1a^2 - n_1xpk_2b^2
  \end{equation}

It is worth noting that 

\begin{equation} \label{eq: lem00}
  k_1aa' = k_2b^2 + k_3c^2 
  \end{equation}

\begin{lem} \label{lem: Mutation}
 
 If we mutate all the $\bw_1$-nodes of $\Delta$ we obtain an ATBD of length type
 $(k_1a'^2,k_2b^2,k_3c^2)$, where $a' = Kk_2k_3bc - a$, i.e., $(a',b,c)$ is the
 mutation of $(a,b,c)$ with respect to $a$. Similarly, if we mutate all $\bw_2
 \backslash \bw_3$-nodes of $\Delta$, the length type changes according to the
 mutation of $(a,b,c)$ with respect to $b \backslash c$.

\end{lem}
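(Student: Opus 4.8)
The plan is to track how a total mutation on the $\bw_1$-nodes transforms the primitive edge vectors and the affine lengths of $\Delta$, and to verify that the new affine lengths are proportional to $(k_1a'^2, k_2b^2, k_3c^2)$ with $a' = Kk_2k_3bc - a$. Recall from Definition~\ref{dfn: mutation} that mutating the $\bw_1$-nodes consists of a nodal slide of the $n_1$ nodes in the cut $\bw_1$ through the monotone fibre, followed by a transferring-the-cut operation with respect to the $\bw_1$-eigenray. The nodal slide preserves the ATF and hence the symplectomorphism type, so the only thing that changes the \emph{diagram} is the transferring-the-cut, which acts on the portion of $\Delta$ on one side of the eigenline by the monodromy matrix \eqref{eq: monodromy} with $n = n_1$ and eigendirection $(s,t) = \bw_1 = (x,p)$. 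First I would apply this monodromy to the edge vector $\bu_3 = (1,0)$ (the edge pointing towards the direction $\bw_1$ points to) to obtain the new edge direction $\bu_3'$, while $\bu_1$ is fixed by the transferral (it already lies along the eigendirection cut side that is preserved), and $\bu_2$ is unaffected since it lies on the far side.

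Next I would compute the new affine lengths. The three edges of the new triangle have directions $\bu_1$, $\bu_2$, and $\bu_3' = M\bu_3$ where $M = \begin{bmatrix} 1 - n_1 xp & n_1 x^2 \\ -n_1 p^2 & 1 + n_1 xp\end{bmatrix}$. The key identity I would establish is that, after the mutation, the length coefficient on the $\bu_1$-edge becomes $k_1 a'^2$ while those on the $\bu_2$- and $\bu_3'$-edges remain $k_2b^2$ and $k_3c^2$; equivalently, the new wedge products satisfy $|\bu_2 \wedge \bu_3'|$, $|\bu_3' \wedge \bu_1|$, $|\bu_1 \wedge \bu_2|$ are in the ratio $k_1a'^2 : k_2b^2 : k_3c^2$. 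Using that wedge products are $SL(2,\Z)$-invariant and $\det M = 1$, the wedge $|\bu_1\wedge\bu_2| = n_3r^2$ is unchanged, and $|\bu_3'\wedge\bu_1| = |M\bu_3 \wedge \bu_1|$ should be computed directly from $M$. The linear relation among the old vectors $k_1a^2\bu_1 + k_2b^2\bu_2 + k_3c^2\bu_3 = 0$ together with $\det M =1$ gives the corresponding new closing relation $k_1a'^2\bu_1 + k_2b^2\bu_2 + k_3c^2\bu_3' = 0$, and here is where the arithmetic of the mutation enters: I would feed in \eqref{eq: lem1} and \eqref{eq: lem00}, together with $n_1p^2/k_1a^2 = \lambda$ from Proposition~\ref{prp: lambda}, to identify the new $\bu_1$-coefficient as exactly $k_1 a'^2$ with $a' = Kk_2k_3bc - a$. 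The identity \eqref{eq: lem00}, $k_1 a a' = k_2 b^2 + k_3 c^2$, is precisely the relation I expect to come out of matching the constant terms.

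The cases of $\bw_2$ and $\bw_3$ would follow by the symmetry of the setup: an $SL(2,\Z)$ change of basis interchanges the roles of the three edges (as already exploited in the proof of Proposition~\ref{prp: lambda} to extend the equality \eqref{eq: lambda0} to the third ratio), so mutating the $\bw_2$- or $\bw_3$-nodes produces the mutation of $(a,b,c)$ with respect to $b$ or $c$ by the identical computation after relabelling. I would state this explicitly rather than repeat the calculation.

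The main obstacle I anticipate is the bookkeeping of \emph{which} edge vector is rotated by the monodromy and with what sign, and correctly identifying the new length coefficient as a perfect-square multiple $k_1 a'^2$ rather than merely some integer. The delicate point is that the transferring-the-cut operation only applies the monodromy to one side of the eigenline, so I must be careful that the edge labelled $\bu_3$ is the one that gets transformed while $\bu_1$ stays fixed; getting this orientation right is what makes the coefficient come out as $k_1a'^2$ with the correct sign in $a' = Kk_2k_3bc - a$ rather than $a' = a - Kk_2k_3bc$ (which would be negative and not a valid Markov triple). I would pin this down by checking the computation against the worked example of Figure~\ref{fig: LemMut}, where the correspondence between diagram mutation and Markov-triple mutation is drawn explicitly.
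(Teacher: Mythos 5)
Your overall strategy --- track the action of the transferring-the-cut monodromy on the edge vectors and read off a new closing relation --- could in principle work, but the way you have set it up misidentifies which edges of the triangle are affected, and the closing relation you aim for is false. The cut $\bw_1$ emanates from the vertex where the $\bu_2$- and $\bu_3$-edges meet and points toward the \emph{opposite} edge $\bu_1$. The monodromy $M$ with eigendirection $\bw_1=(x,p)$ satisfies $M\bu_2=\bu_3$ (a direct check from \eqref{eq: monodromy} using $\bu_2=(1+n_1xp,n_1p^2)$, $\bu_3=(1,0)$), so after the nodal slide and transfer the two edges \emph{adjacent} to the cut's base vertex become parallel and glue into a single new edge, of affine length proportional to $k_2b^2+k_3c^2=k_1aa'$ by \eqref{eq: lem00}, i.e.\ proportional to $k_1a'^2$. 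Meanwhile the opposite edge $\bu_1$ is the one crossed by the transferred eigenline: it splits into two edges, one kept in direction $\bu_1$ and one rotated to $M^{\pm1}\bu_1$. In particular the new triangle does \emph{not} have edges in directions $\bu_1,\bu_2,M\bu_3$, and the relation $k_1a'^2\bu_1+k_2b^2\bu_2+k_3c^2M\bu_3=0$ cannot hold; testing it on the $\CP^2$ diagram with $(a,b,c)=(1,1,1)$ gives a visibly nonzero left-hand side. Applying $M$ to the old relation $k_1a^2\bu_1+k_2b^2\bu_2+k_3c^2\bu_3=0$ produces nothing of the required shape either, so $\det M=1$ alone will not yield the identity you want.

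The substantive content of the lemma, which your plan does not reach, is the location of the point where the transferred eigenline meets the $\bu_1$-edge: this is what decides that the old $k_1a^2$-edge splits in the proportion $k_3c^2:k_2b^2$, so that the new length type is $(k_1a'^2,k_2b^2,k_3c^2)$ and not merely ``some triangle with the right glued edge.'' The paper computes this by writing $a'k_3c^2\bu_3=\beta\bw_1-\alpha\bu_1$ as in \eqref{eq: lem01} and solving for $\alpha$ using Proposition \ref{prp: lambda} and \eqref{eq: lem1}, arriving at $\alpha=ak_3c^2$. If you want to keep your vector-bookkeeping approach, the fix is to write the closing relation for the correct new edge directions ($\bu_2$, $\bu_1$, and $M^{\pm1}\bu_1$) and solve for the two unknown lengths of the split pieces; that is equivalent to the paper's $\alpha$-computation. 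The sanity check you propose against Figure \ref{fig: LemMut} would in fact have exposed the problem: there the edge whose coefficient becomes $k_1a'^2$ is the glued one, not the old $\bu_1$-edge.
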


\begin{proof} We need to prove that the affine length of the edges of the
mutated ATBD is proportinal to $(k_1a'^2,k_2b^2,k_3c^2)$. For convinience we
rescale the lengths of the edges of $\Delta$ by $a'$. 

The mutation glue the edges in directions $\bu_2$, $\bu_3$, to get an edge of
affine length $a'(k_3c^2 + k_2b^2) = ak_1a'^2$. As in Figure \ref{fig: LemMut},
assume we kept $\bw_2$ fixed and mutated $\bw_3$. The mutation divides the edge
in direction $\bu_1$ into two edges with affine lengths $\alpha$ and $a'k_1 a^2
- \alpha$. Say that the edge of length $a'k_1 a^2 - \alpha$ is opposite to the
$w_2$-eigenray. It is enough to show that $\alpha = ak_3c^2$, so $a'k_1 a^2 -
\alpha = a(aa'k_1 - k_3c^2) = ak_2b^2$.
   
We have that for some $\beta < 0$: 
\begin{equation} \label{eq: lem01}
 a'k_3c^2 \bu_3 = \beta \bw_1 - \alpha \bu_1
  \end{equation}
  
Hence

\begin{eqnarray}  
 0 = \beta p + \alpha n_2q^2 \\ \label{eq: lem02}
 a'k_3c^2 = \beta x + \alpha(n_2yq - 1)  \label{eq: lem03}
\end{eqnarray}
   
By equations \eqref{eq: lambda0} and \eqref{eq: lem02}, we have:    
   
\begin{equation} \label{eq: lem04}
\beta= - \alpha \frac{n_2 q^2}{p} = - \alpha\frac{k_2b^2n_1p}{k_1a^2}
  \end{equation}   

\newpage
Plugging into equation \eqref{eq: lem03} and using equation \eqref{eq: lem1}, we get:

\begin{eqnarray}  
a'k_3c^2 & = & \frac{\alpha}{a}\left[\frac{1}{k_1a}\left(-k_2b^2n_1px + k_1a^2n_2yq\right) - a \right] + \alpha n_2q^2  \nonumber \\
         & = & \frac{\alpha}{a}\left[Kk_2k_3bc -a\right] = a'\frac{\alpha}{a} \label{eq: lem05}
\end{eqnarray}

So indeed $\alpha = ak_3c^2$.
           
\end{proof}

A direct consequence of Proposition \ref{prp: lambda} and that $\Delta$ is
length-related to the Markov type II equation \eqref{eq: Markovtype} is

\begin{cor} \label{cor: MarkovI}
 
 Consider the ATBD $\Delta$. The numbers $n_1p^2 = |\bu_1 \wedge \bu_3|$,
 $n_2q^2 = |\bu_3 \wedge \bu_2|$, $n_3r^2 = |\bu_2 \wedge \bu_1|$ are so that
 $(p,q,r)$ is a Markov type I triple for the equation:
 
 \begin{equation*}
 n_1p^2 + n_2q^2 + n_3r^2 = \sqrt{dn_1n_2n_3}pqr,  \ \ \eqref{eq: MarkovI}
  \end{equation*}
 
where $d = \frac{K^2k_1k_2k_3}{\lambda}$. 
 
 \end{cor}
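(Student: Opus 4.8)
The plan is to verify equation \eqref{eq: MarkovI} for $(p,q,r)$ by direct substitution, trading the geometric quantities $n_ip^2,n_2q^2,n_3r^2$ for the length data via Proposition \ref{prp: lambda} and then invoking the Markov type II equation \eqref{eq: Markovtype}. Concretely, Proposition \ref{prp: lambda} supplies
\[
n_1p^2 = \lambda k_1a^2,\qquad n_2q^2 = \lambda k_2b^2,\qquad n_3r^2 = \lambda k_3c^2,
\]
so every term in \eqref{eq: MarkovI} can be rewritten in terms of $\lambda$, the $k_i$, and the Markov type II triple $(a,b,c)$.

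First I would compute the two sides of \eqref{eq: MarkovI} separately. Summing the three identities above and applying \eqref{eq: Markovtype} gives the left-hand side
\[
n_1p^2 + n_2q^2 + n_3r^2 = \lambda\,(k_1a^2 + k_2b^2 + k_3c^2) = \lambda K k_1k_2k_3\,abc .
\]
For the right-hand side I would instead multiply the three identities, obtaining $n_1n_2n_3\,(pqr)^2 = \lambda^3 k_1k_2k_3\,(abc)^2$. Multiplying through by $d = K^2k_1k_2k_3/\lambda$ yields
\[
d\,n_1n_2n_3\,(pqr)^2 = K^2\lambda^2 (k_1k_2k_3)^2 (abc)^2 = \bigl(\lambda K k_1k_2k_3\,abc\bigr)^2 ,
\]
so that $\sqrt{d\,n_1n_2n_3}\,pqr = \lambda K k_1k_2k_3\,abc$, which matches the left-hand side. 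This establishes \eqref{eq: MarkovI} with the claimed $d$, and is the sense in which the statement is a direct consequence of Proposition \ref{prp: lambda} together with the length-relation of $\Delta$ to \eqref{eq: Markovtype}.

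It remains to confirm that \eqref{eq: MarkovI} is a genuine Markov type I equation and that $(p,q,r)$ is a Markov type I triple. The integrality $p,q,r \in \Z_{>0}$ is exactly the content of the monodromy computation preceding the statement: $p,q$ are the relevant entries of the primitive eigendirections $\bw_1,\bw_2$, and $r$ is defined by $|\bu_2 \wedge \bu_1| = n_3r^2$. The displayed identity $d\,n_1n_2n_3\,(pqr)^2 = (\lambda K k_1k_2k_3\,abc)^2$ already shows that $d\,n_1n_2n_3$ is the square of the rational number $\lambda K k_1k_2k_3\,abc/(pqr)$, so once $d$ is known to be a positive integer, $d\,n_1n_2n_3$ is an integer that is a rational square, hence a perfect square. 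I expect the genuine obstacle to be the remaining defining conditions, namely that $d = K^2k_1k_2k_3/\lambda$ is a positive integer and that $n_k \mid d\,n_in_j$. The cleanest route is to show each quotient $d\,n_in_j/n_k = d\,n_1n_2n_3/n_k^2$ is an integral perfect square, which is equivalent to both the divisibility conditions and the squareness of $d\,n_1n_2n_3$; a short computation with the $\lambda$-relations gives, for instance, $d\,n_2n_3/n_1 = \bigl(Kk_2k_3\,bc\,p/(a\,qr)\bigr)^2$, and one recognizes $\sqrt{d\,n_2n_3/n_1}\,qr - p = p\,(Kk_2k_3bc-a)/a$ as the mutated datum corresponding, via Lemma \ref{lem: Mutation}, to the length-type mutation $a \mapsto a' = Kk_2k_3bc - a$; the integrality of the mutated node data then forces these quotients to be integers. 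In the del Pezzo applications one may alternatively invoke the node-related normalization of Definition \ref{dfn: ATBDequation}, where $d = 9-k$ (or $d=2$ for $\PxP$) is an integer by construction, so that the defining conditions hold automatically.
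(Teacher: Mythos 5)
Your computation is correct and is exactly the substitution the paper intends when it calls the corollary a direct consequence of Proposition \ref{prp: lambda} and the length-relation of $\Delta$ to \eqref{eq: Markovtype}; the paper supplies no further detail, so your spelled-out verification of both sides matches its approach. Your closing paragraph on the integrality and square conditions defining a Markov type I equation goes beyond what the paper checks here (it only later pins down $d$ as the degree, in Theorem \ref{thm: MarkovI}), but it is a harmless and sensible supplement.
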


It follows then from the Lemma \ref{lem: Mutation} and Proposition \ref{prp: 
lambda}:

\begin{cor} \label{cor: MutationMarkovI}
 If we mutate all the $\bw_1$-nodes of $\Delta$ we obtain an ATBD of node type
 $((n_1,p'), (n_2,q), (n_3,r))$, where $(p',q,r)$ is a mutation of the $(p,q,r)$
 Markov type I triple. Analogously, for the other $\bw_i$-nodes.  
\end{cor}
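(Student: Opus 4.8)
The plan is to transport the Markov type II mutation already established in Lemma \ref{lem: Mutation} through the dictionary of Proposition \ref{prp: lambda}, so that the node-type data after the mutation is pinned down by the length-type data together with the invariance of the underlying del Pezzo surface.

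First I would record that a mutation is a nodal slide followed by a transfer of the cut, so it moves the $\bw_1$-nodes but neither creates nor destroys nodes; hence the mutated diagram $\Delta'$ is again of triangular shape and its three cuts still carry $n_1,n_2,n_3$ nodes. By Lemma \ref{lem: Mutation}, $\Delta'$ is of length type $(k_1{a'}^2,k_2b^2,k_3c^2)$ with $a' = Kk_2k_3bc - a$, and it represents an ATF of the \emph{same} del Pezzo surface, hence of the same degree $d$. Applying Proposition \ref{prp: lambda} and Corollary \ref{cor: MarkovI} to $\Delta'$, and writing its node type as $((n_1,p'),(n_2,q'),(n_3,r'))$, there is a constant $\lambda'$ with
\[
\frac{n_1 {p'}^2}{k_1 {a'}^2} = \frac{n_2 {q'}^2}{k_2 b^2} = \frac{n_3 {r'}^2}{k_3 c^2} = \lambda',
\qquad
d = \frac{K^2 k_1 k_2 k_3}{\lambda'} .
\]
Since $K,k_1,k_2,k_3$ and $d$ are unchanged, this forces $\lambda' = \lambda$. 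Comparing the middle and right ratios with the corresponding ones for $\Delta$ and taking positive square roots then gives $q' = q$ and $r' = r$, while the left ratio gives $p' = (a'/a)\,p$.

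It remains to identify $(a'/a)\,p$ with the Markov type I mutation $\sqrt{dn_2n_3/n_1}\,qr - p$. Using $p = a\sqrt{\lambda k_1/n_1}$, $q = b\sqrt{\lambda k_2/n_2}$, $r = c\sqrt{\lambda k_3/n_3}$ from Proposition \ref{prp: lambda}, together with $d = K^2k_1k_2k_3/\lambda$ and $a'+a = Kk_2k_3bc$, a direct substitution shows $(a'+a)p/a = \sqrt{dn_2n_3/n_1}\,qr$, whence $p' = (a'/a)\,p = \sqrt{dn_2n_3/n_1}\,qr - p$, which is exactly the mutation of $(p,q,r)$ with respect to $p$. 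The symmetric argument, using the $b$- and $c$-versions of Lemma \ref{lem: Mutation}, handles the $\bw_2\setminus\bw_3$- and $\bw_3$-nodes.

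I expect the main obstacle to be the bookkeeping behind $\lambda' = \lambda$: one must be careful that triangular shape and the node counts $n_1,n_2,n_3$ genuinely survive the nodal slide and cut transfer, so that Corollary \ref{cor: MarkovI} applies verbatim to $\Delta'$, and that the number $d$ may legitimately be treated as a mutation invariant because it is the degree of the fixed ambient surface. Once $\lambda'=\lambda$ is in hand, everything else is the routine algebraic verification sketched above.
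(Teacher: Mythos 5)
Your proposal is correct and follows the paper's own route: the paper derives this corollary precisely by combining Lemma \ref{lem: Mutation} (new length type) with Proposition \ref{prp: lambda} (the constant $\lambda$ tying node type to length type), which is exactly what you do, and your algebraic verification that $(a'/a)p=\sqrt{dn_2n_3/n_1}\,qr-p$ checks out. The only (harmless) detour is that you obtain $\lambda'=\lambda$ from the invariance of the degree $d$ via Corollary \ref{cor: MarkovI} and Theorem \ref{thm: MarkovI}, whereas it also falls out of Proposition \ref{prp: lambda} directly, since the ratios $n_2q^2/(k_2b^2)$ and $n_3r^2/(k_3c^2)$ have both numerator and denominator untouched by a mutation at the $\bw_1$-nodes.
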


We can verify for each ATBD in Figures \ref{fig: triangshape1}-\ref{fig: 
triangshape4}, that the value of $d$ in Corollary \ref{cor: MarkovI} is equal to
the degree of the corresponding del Pezzo. In fact one can prove that Corollary
\ref{cor: MarkovI}, without knowing beforhand that the ATBD is length-related
to a Markov type II equation \eqref{eq: Markovtype}.

\begin{thm} \label{thm: MarkovI}
  Let $\Delta'$ be an ATBD of node type
 $((n_1,p), (n_2,q), (n_3,r))$, such that the total space $X_{\Delta'}$ of the corresponding 
 ATF is monotone. Then $(p,q,r)$ is a Markov type I triple for \eqref{eq: 
 MarkovI}.
\end{thm}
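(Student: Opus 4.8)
The plan is to read the Markov type I relation off the triangular geometry together with monotonicity directly, bypassing any a priori assumption that $\Delta'$ is length-related to an equation \eqref{eq: Markovtype}. I would keep the coordinates fixed before Proposition \ref{prp: lambda}: let $\bu_1,\bu_2,\bu_3$ be primitive edge directions with $\bu_3=(1,0)$, so that the monodromy formula \eqref{eq: monodromy} forces $\bu_1=(1-n_2yq,\,-n_2q^2)$ and $\bu_2=(1+n_1xp,\,n_1p^2)$, and abbreviate the three corner determinants $D_1=|\bu_2\wedge\bu_3|=n_1p^2$, $D_2=|\bu_3\wedge\bu_1|=n_2q^2$, $D_3=|\bu_1\wedge\bu_2|=n_3r^2$; these are exactly the node-type data. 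The first step is the purely affine fact that the oriented edge vectors of a triangle sum to zero, $\epsilon_1\ell_1\bu_1+\epsilon_2\ell_2\bu_2+\epsilon_3\ell_3\bu_3=0$ for signs $\epsilon_i=\pm1$ and affine lengths $\ell_i>0$. Wedging this identity with $\bu_3$ and with $\bu_1$ and passing to absolute values gives $\ell_1D_2=\ell_2D_1$ and $\ell_2D_3=\ell_3D_2$, hence
\[
\frac{\ell_1}{D_1}=\frac{\ell_2}{D_2}=\frac{\ell_3}{D_3}=:\rho .
\]
In particular $\Delta'$ is automatically of length type $(n_1p^2,n_2q^2,n_3r^2)$, with no monotonicity used yet.

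Next I would bring in monotonicity through a volume count. The monotone fibre of an (almost) toric fibration is characterised by sitting at a common affine distance $h$ from all facets, so here the interior marked point of $\Delta'$ is at affine distance $h$ from each of the three edges. Computing the Lebesgue area of the triangle in two ways — from a corner, $\mathrm{Area}(\Delta')=\tfrac12\ell_1\ell_2|\bu_1\wedge\bu_2|=\tfrac12\rho^2Q$ with $Q:=D_1D_2D_3=n_1n_2n_3\,p^2q^2r^2$, and as the sum of the three subtriangles on the marked point, $\mathrm{Area}(\Delta')=\tfrac{h}{2}(\ell_1+\ell_2+\ell_3)=\tfrac{h\rho}{2}S$ with $S:=D_1+D_2+D_3=n_1p^2+n_2q^2+n_3r^2$ — and eliminating $\rho$ through $\rho Q=hS$ yields
\[
\mathrm{Area}(\Delta')=\frac{h^2S^2}{2Q}.
\]

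The third step is to identify this area with the degree. Since $X_{\Delta'}$ is a simply connected rational surface, $H_2$ is generated by spherical classes and monotonicity \eqref{eq: mon} upgrades to $[\omega]=C\,\mathrm{c}_1$. The common area of the basic Maslov-$2$ disks of the monotone fibre equals the affine distance $h$, and it also equals $C$ (on $\pi_2(X_{\Delta'})$ the Maslov index is twice $\mathrm{c}_1$, so $C=2C_L$ and a Maslov-$2$ disk has area $2C_L$), whence $C=h$. The symplectic volume is the Lebesgue area of the base, $\int_{X_{\Delta'}}\omega^2=2\,\mathrm{Area}(\Delta')$, while $\int_{X_{\Delta'}}\omega^2=C^2\!\int_{X_{\Delta'}}\mathrm{c}_1^2=h^2d$ with $d:=\int_{X_{\Delta'}}\mathrm{c}_1^2$ the degree of the del Pezzo surface $X_{\Delta'}$ ($9-k$ for $\Blk$). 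Comparing with the previous display gives $S^2=dQ$, that is,
\[
n_1p^2+n_2q^2+n_3r^2=\sqrt{d\,n_1n_2n_3}\;pqr ,
\]
which is \eqref{eq: MarkovI}. To see that this is a genuine Markov type I equation: $p,q,r$ are positive integers by the definition of node type; $d=S^2/Q$ coincides with the integer degree, so $d\,n_1n_2n_3=(S/pqr)^2$ is an integer equal to the square of a rational, forcing $S/pqr\in\Z$ and hence $d\,n_1n_2n_3$ to be a perfect square; and the divisibility $d\,n_in_j\equiv0\bmod n_k$ follows from Corollary \ref{cor: MutationMarkovI}, because every mutation of $\Delta'$ is again a monotone node-type ATBD with integral node data, which forces the Vieta-involution roots to be integers.

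The hard part, which I expect to be the only genuine obstacle, is the honest justification of the two dictionary entries used in the second and third steps in the presence of cuts and nodes: that the monotone fibre really sits at equal affine distance from the three straight edges, and that $\int_{X_{\Delta'}}\omega^2$ really equals the Lebesgue area of the cut triangle. Both are best handled by passing to the limit orbifold (Definition \ref{dfn: limorb}), whose moment image is precisely the triangle, or equivalently by noting that the ATF is toric away from the measure-zero cuts and that nodal slides preserve both the symplectomorphism type and the monotone fibre; the area and disk-area computations then reduce to the standard toric ones. Everything else is elementary affine algebra.
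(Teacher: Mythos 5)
Your argument is correct, but it reaches the key identity $(n_1p^2+n_2q^2+n_3r^2)^2=d\,n_1n_2n_3\,p^2q^2r^2$ by a genuinely different route from the paper. The paper passes to the limit orbifold $X_o$: it shows $X_o$ is monotone by transplanting cycles supported away from the orbifold points into $X_{\Delta'}$ (Claim \ref{clm: 1}), computes the self-intersection of the generator $H$ of $H_2(X_o)$ via the local orbifold intersection numbers $1/n_ir_i^2$ at the corners (Claim \ref{clm: 2}), and equates the self-intersection of the anticanonical class $(A+B+C)H$ with the degree $d$. You never leave $X_{\Delta'}$: your first step, $\ell_i/D_i=\rho$ from the closing-up of the edge vectors, is exactly Proposition \ref{prp: lambda}, but you then trade the Chen--Ruan intersection theory for the Duistermaat--Heckman identity $\int\omega^2=2\,\mathrm{Area}(\Delta')$ combined with the equidistance characterisation of the monotone fibre (giving $C=h$), and compare with $\int\omega^2=C^2d$. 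Both proofs thus use monotonicity to convert the degree into an affine-geometric quantity of the triangle; yours avoids orbifold intersection numbers entirely, at the cost of having to justify the volume and disk-area identities across the cuts --- which you correctly flag, and which do reduce to the toric case since the cuts have measure zero (alternatively, one can bypass the monotone fibre altogether by pairing $[\omega]=C\mathrm{c}_1$ with the anticanonical torus $\Sigma$ over the boundary to get $\ell_1+\ell_2+\ell_3=Cd$). A bonus of your write-up is the verification that $dn_1n_2n_3$ is a perfect square, which the paper leaves implicit; your appeal to Corollary \ref{cor: MutationMarkovI} for the divisibility conditions is the one slightly shaky step, since that corollary is stated for ATBDs already known to be length-related to a Markov type II equation, but this does not affect the identity the theorem actually asserts.
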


\begin{proof} Let $(A,B,C)$ be the length type of $\Delta'$. We observe that in
Proposition \ref{prp: lambda} still holds, so we have

\begin{equation} \label{eq: lambda1}
\lambda = \frac{n_1p^2}{A} = \frac{n_2q^2}{B} = \frac{n_3r^2}{C}.
\end{equation} 

We will look at the self-intersection of the anti-canonical divisor class inside
the limit orbifold $X_o$, as defined in \cite{ChenRu02,Chen04}. The second
homology of the limit orbifold $X_o$ is one-dimensional, since the moment polytope
is a triangle. Denote by $H$ the generator of $H_2(X_o)$. 

\begin{clm} \label{clm: 1}
 $X_o$ is monotone. 
\end{clm}

\begin{proof}
 Consider a fiber $T$ and disks living over paths conecting $T$ to the edges in
the moment polytope of the limit orbifold $X_o$. These disks generate $H_2(X_o,T,
\Q)$, so some integer linear combination is a multiple of $H$ viewed in
$H_2(X_o,T,\Z)$. Therefore we can complete these disks with a 2-chain in $T$, to
get a cycle $m\hat{H}$ representing a multiple of $H$ lying away from the orbifold points.

Now, the complement of small neighbourhoods around the orbifold points can be
symplectically embedded into $X_{\Delta'}$, up to sliding the nodes close enough
to the edges, see \cite[Figure 7,Section 4.2]{Vi14}. Hence the Chern class and
symplectic area of $[m\hat{H}]$ coincide in both $X_{\Delta'}$ and $X_{o}$,
we get monotonicity for $X_o$.  
\end{proof}

\begin{clm} \label{clm: 2}
 The self-intersection of $H$ is
 
\begin{equation} \label{eq: H.H}
  H \cdot H = \frac{1}{\lambda ABC} = \frac{\lambda^2}{n_1n_2n_3p^2q^2r^2}.
\end{equation} 
 
\end{clm}

\begin{proof}
  The pre-image of the edges of length $A$ and $B$ intersect at one orbifold point and
  represent $AH, BH \in H_2(X_o)$. The degree of the orbifold point is the determinant
  of the primitive vectors of the edges, i~.e~., $n_3r^2$. Hence, $AH \cdot BH = 1/n_3r^2$
  \cite{ChenRu02,Chen04}, and the claim follows.  
\end{proof}

Now the anti-canonical divisor is represented by the pre-image of the three 
edges, and its self-intersection is the degree $d$ of $X_{\Delta'}$, by the same 
argument given in Claim \ref{clm: 1}. Therefore, by Claim \ref{clm: 2}:

\begin{equation} \label{eq: K.K}
 (A+B+C)H \cdot (A+B+C)H = \frac{(A+B+C)^2\lambda^2}{n_1n_2n_3p^2q^2r^2} = d.
\end{equation} 

Taking the square root and using \eqref{eq: lambda1}, we get the Markov type
I equation \eqref{eq: MarkovI}.

\end{proof}

From Theorem \ref{thm: MarkovI} and the work of Karpov-Nogin \cite{KaNo98} we
see that our list described in Figures \ref{fig: triangshape1}-\ref{fig: triangshape4}
describes all ATBDs of triangular shape.

\begin{prp}[Section 3.5 of \cite{KaNo98}] \label{prp: KaNo1}
  All Markov type I equations \eqref{eq: MarkovI} with $n_1 + n_2 + n_3 + d = 
  12$ are the ones appearing in Figures \ref{fig: triangshape1}, \ref{fig: 
  triangshape2}, \ref{fig: triangshape3}, \ref{fig: triangshape4}.
\end{prp}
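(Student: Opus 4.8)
The plan is to turn the statement into a finite arithmetic enumeration. Because $d,n_1,n_2,n_3$ are positive integers constrained by $d+n_1+n_2+n_3 = 12$, only finitely many quadruples occur (at most $\binom{11}{3}$ ordered ones, and far fewer once we quotient by the $S_3$ that permutes the $n_i$). I would then sieve these candidates by the two conditions built into the definition of a Markov type I equation: that $d\,n_1 n_2 n_3$ be a perfect square, and that $d\,n_i n_j \equiv 0 \pmod{n_k}$ for each $\{i,j,k\} = \{1,2,3\}$.

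First I would record a reformulation that makes the sieve mechanical. Set $m := \sqrt{d\,n_1 n_2 n_3}$, which is a positive integer by the square condition; since $d\,n_i n_j = m^2/n_k$, the divisibility $n_k \mid d\,n_i n_j$ is equivalent to $n_k^2 \mid m^2$, i.e.\ to $n_k \mid m$. Hence a quadruple gives a Markov type I equation precisely when $d\,n_1 n_2 n_3$ is a square and each $n_i$ divides its square root $m$.

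Next I would run over $d = 1,\dots,9$ (note $d \le 9$ as $n_1+n_2+n_3 \ge 3$), list the partitions of $12-d$ into three positive parts $n_1 \le n_2 \le n_3$, and apply the two tests. This leaves exactly eleven equations: $(d;n_1,n_2,n_3) = (9;1,1,1)$, $(8;1,1,2)$, $(6;1,2,3)$, $(5;1,1,5)$, $(4;2,2,4)$, then $(3;1,2,6)$ and $(3;3,3,3)$, then $(2;1,3,6)$ and $(2;2,4,4)$, and finally $(1;1,5,5)$ and $(1;2,3,6)$, with no solution at $d=7$. By Corollary \ref{cor: MarkovI} and Theorem \ref{thm: MarkovI}, each triangular ATBD of Figures \ref{fig: triangshape1}--\ref{fig: triangshape4} is node-related to a Markov type I equation whose $d$ is the degree of the underlying del Pezzo; comparing these, the eleven quadruples above are exactly those realized by the figures (the constructions of Section \ref{sec: ATF} provide a realizing ATBD for each). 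This also matches the tally: a single equation for each of $\CP^2$, $\PxP$, $\BlIII$, $\BlIV$, $\BlV$, and two apiece for $\BlVI$, $\BlVII$, $\BlVIII$.

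The deeper meaning of this count, and the reason for invoking \cite[Section~3.5]{KaNo98}, is the dictionary with $3$-block exceptional collections on del Pezzo surfaces: a decomposition into blocks of sizes $n_1,n_2,n_3$ has total length $12-d = \chi(X_d)$ (which is the origin of the hypothesis $n_1+n_2+n_3+d = 12$), the integers $p,q,r$ are the ranks appearing in the three blocks, and the square and divisibility conditions encode integrality of the Euler pairing on $K_0(X_d)$; Karpov--Nogin's classification then supplies completeness together with an independent geometric realization. I expect the main difficulty to be bookkeeping rather than conceptual depth. In particular one must apply the divisibility test with care, since it --- and not the perfect-square test --- is what discards near-misses such as $(2;1,1,8)$ and $(1;1,1,9)$, which are perfect-square quadruples but fail $n_3 \mid m$; and matching our normalization to Karpov--Nogin's (identifying their degree with $d$ and their block ranks with $(p,q,r)$) is the one step that genuinely requires attention.
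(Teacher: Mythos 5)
Your enumeration is correct, but note that the paper itself offers no proof of this proposition: it is stated purely as a citation to Karpov--Nogin \cite[Section~3.5]{KaNo98}, which classifies the relevant quadruples via $3$-block exceptional collections on del Pezzo surfaces. Your argument therefore genuinely differs in that it is self-contained: the reduction of the divisibility condition $n_k \mid d\,n_i n_j$ to $n_k \mid m$ with $m = \sqrt{d\,n_1 n_2 n_3}$ is valid (since $d\,n_i n_j = m^2/n_k$, the condition is $n_k^2 \mid m^2$), and the resulting sieve over the finitely many partitions of $12-d$ does yield exactly the eleven quadruples you list, with the counts $1,1,0,1,1,1,2,2,2$ for $d=9,\dots,1$ matching the number of triangular ATBDs per del Pezzo in Figures \ref{fig: triangshape1}--\ref{fig: triangshape4}. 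What your elementary check buys is independence from \cite{KaNo98} and easy verifiability; what the citation buys the paper is the conceptual dictionary (block sizes $n_i$, ranks $p,q,r$, and the constraint $n_1+n_2+n_3 = \chi(X_d) = 12-d$) that you correctly identify in your closing paragraph, together with Karpov--Nogin's companion results (such as Proposition on mutations reducing to minimal solutions) that the paper also needs. The only point to be careful about, which you flag, is that the divisibility test and not the square test eliminates quadruples like $(2;1,1,8)$, $(1;1,1,9)$ and also $(1;1,2,8)$; your sieve handles these correctly.
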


And from the proposition below and Corollary \ref{cor: MutationMarkovI}, it
follows that each ATBD of Figures \ref{fig: triangshape1}-\ref{fig:
triangshape4}, gives rise to infinitely many ones.

\begin{prp}[Section 3.7 of \cite{KaNo98}] \label{prp: KaNo1}
  Any solution of the Markov type I equations appearing in Figures \ref{fig: triangshape1}, \ref{fig: 
  triangshape2}, \ref{fig: triangshape3}, \ref{fig: triangshape4}, can be 
  reduced to a minimum solution via a series of mutations. Moreover, for a 
  non-minimum solution 2 mutations increase the sum $p + q + r$ and one reduces.
\end{prp}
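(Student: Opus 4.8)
The plan is to run a Vieta-jumping (Markov descent) argument on \eqref{eq: MarkovI}, exactly as in the classical case $p^2+q^2+r^2=3pqr$, but carrying the weights $n_1,n_2,n_3,d$ through the estimates. First I would record that mutation is an involution coming from a quadratic. Fixing $q,r$ and reading \eqref{eq: MarkovI} as a quadratic in $p$, namely $n_1p^2-\sqrt{dn_1n_2n_3}\,qr\,p+(n_2q^2+n_3r^2)=0$, its two roots are $p$ and $p'=\sqrt{dn_2n_3/n_1}\,qr-p$, with $p\,p'=(n_2q^2+n_3r^2)/n_1$ by Vieta's relations. Since $p,q,r>0$ this already forces $p'>0$, while $p'\in\Z$ follows from the divisibility hypotheses $dn_in_j\equiv0\bmod n_k$ together with $dn_1n_2n_3$ being a square. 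Hence a mutation with respect to $p$ sends Markov type I triples to Markov type I triples, and symmetrically for $q$ and $r$.

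Next I would pin down how the sum changes. Because $p+p'>0$, we have $p'<p\iff p\,p'<p^2\iff n_2q^2+n_3r^2<n_1p^2$; so the mutation with respect to $p$ strictly decreases $p+q+r$ exactly when $n_1p^2>n_2q^2+n_3r^2$, strictly increases it when $n_1p^2<n_2q^2+n_3r^2$, and similarly for $q,r$. The elementary \emph{trichotomy} that at most one of $n_1p^2,n_2q^2,n_3r^2$ can exceed the sum of the other two (adding two such inequalities forces $0>2n_3r^2$, a contradiction) shows that at most one mutation can reduce the sum. Call a triple \emph{balanced} if no term exceeds the sum of the other two. Then a non-balanced triple has exactly one reducing mutation and, applying the sign criterion to its two dominated variables, exactly two increasing ones; iterating the unique reducing mutation strictly decreases the positive integer $p+q+r$, so the descent terminates at a balanced triple. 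This already yields the reduction to a local minimum and the ``2 up, 1 down'' behaviour away from balanced triples.

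The remaining, and hardest, point is to identify balanced triples with the \emph{global} minimum, i.e.\ to rule out a balanced triple that is not of minimal sum; equivalently, to show a non-minimum triple is never balanced. I would do this by bounding balanced triples. Ordering the terms as $n_1p^2\ge n_2q^2\ge n_3r^2$, balance gives $n_1p^2\le n_2q^2+n_3r^2\le 2n_2q^2$, hence $q\ge p\sqrt{n_1/2n_2}$, while the left-hand side of \eqref{eq: MarkovI} is $\le 3n_1p^2$, hence $qr\le 3p\sqrt{n_1/dn_2n_3}$. Combining the two bounds and cancelling $p$ yields $\sqrt{dn_3/2}\,r\le 3$, so the smallest variable is bounded, $r\le 3\sqrt{2/(dn_3)}$; feeding this back bounds $p$ and $q$ as well. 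Thus there are only finitely many balanced triples for each equation \eqref{eq: MarkovI}, reducing uniqueness to a finite check — the case analysis of Karpov--Nogin, matching the single triangular diagram attached to each del Pezzo in Figures \ref{fig: triangshape1}--\ref{fig: triangshape4}.

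Finally I would assemble the pieces. Uniqueness of the balanced triple forces the descent forest to be a single tree: every triple reduces through sum-decreasing mutations to the one balanced triple, so that triple realises the global minimum of $p+q+r$, and every other triple is non-balanced, hence carries exactly one reducing and two increasing mutations. (One should also note that equality $n_ip^2=n_jq^2+n_kr^2$ does not occur for these equations — a short size/parity check — so the three cases of the sign criterion are always strict and the count is clean.) I expect the genuine work to be precisely the finite verification that the balanced, minimal triple is unique for each \eqref{eq: MarkovI}: the Vieta involution, the sign criterion and the trichotomy are formal, whereas excluding a second balanced solution of larger sum is exactly what makes the descent a tree rather than a forest and what pins its endpoint to the global minimum.
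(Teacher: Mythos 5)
The paper does not actually prove this proposition: it is quoted from Karpov--Nogin \cite{KaNo98}, Section 3.7, and used as a black box, so your Vieta-jumping reconstruction is an attempt to reprove the cited result rather than a rival to an argument in the text. The skeleton you set up is the right one (and is essentially the Karpov--Nogin descent): mutation as the second root of the quadratic in one variable, $pp'=(n_2q^2+n_3r^2)/n_1>0$ for positivity, integrality from the divisibility and square hypotheses, the sign criterion $p'<p\iff n_1p^2>n_2q^2+n_3r^2$ together with the trichotomy giving at most one reducing direction, and termination of the descent at a balanced triple.

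The gap is in the step identifying balanced triples with the minimum, which you correctly isolate as the crux but do not actually reduce to a finite check. Your bound $\sqrt{dn_3/2}\,r\le 3$ controls only the variable carrying the smallest weighted square; the assertion that feeding this back "bounds $p$ and $q$ as well" does not follow as stated. Once $r$ is fixed, \eqref{eq: MarkovI} becomes an indefinite binary quadratic in $(p,q)$, which generically has infinitely many integer solutions, so finiteness of balanced triples is not automatic from bounding $r$ alone. To close this you must re-use the balance inequality: with the ordering $n_1p^2\ge n_2q^2\ge n_3r^2$, balance forces $0\le n_1p^2-n_2q^2\le n_3r^2$, and writing $n_1p^2=n_2q^2+\delta$ in \eqref{eq: MarkovI} yields $n_2q^2\bigl(\sqrt{dn_3}\,r-2\bigr)\le \delta+n_3r^2\le 2n_3r^2$, which bounds $q$ (hence $p$) whenever $\sqrt{dn_3}\,r>2$; the finitely many residual cases with $dn_3r^2\le 4$ need separate inspection. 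Only then does the enumeration of balanced triples --- checking that each equation in Figures \ref{fig: triangshape1}--\ref{fig: triangshape4} has exactly one, and that the equality $n_1p^2=n_2q^2+n_3r^2$ never occurs --- become a genuine finite computation. That computation is the actual content of \cite{KaNo98}, Section 3.7, and it is not optional: a second balanced triple of larger sum would be a non-minimum solution with no reducing mutation, contradicting both clauses of the statement.
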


\section{Infinitely many tori} \label{sec: Proof}
\noindent We name $\Theta^{n_1,n_2,n_3}_{p,q,r}$ the monotone fiber of a monotone ATBD
of node-type $((n_1,p),$ $(n_2,q),(n_3,r))$. In this section we show that these 
tori live in mutually different symplectomorphism classes, completing the proof of
Theorem \ref{thm: main}\ref{thm: a}. We also show that there are infinitely many
symplectomorphism classes formed by the monotone tori $T_1(a,b)$ in $\BlI$, depicted in
Figure \ref{fig: ATBD1Blup}, proving \ref{thm: main}\ref{thm: b}. To finish the 
section we prove Theorem \ref{thm: X-Sigma}.

\subsection{ATBDs of triangular shape}

The Theorem \ref{thm: main}\ref{thm: a} follows from Theorem \ref{thm: convex hull} 
and the invariance of the boundary Maslov two convex hull for monotone 
Lagrangian \cite[Corollary~4.3]{Vi14}.

\begin{thm} \label{thm: convex hull}
  The boundary Maslov two convex hull of $\Theta^{n_1,n_2,n_3}_{p,q,r}$ is
  equal to the convex hull whose vertices are the primitive vectors generating the 
  fan of the corresponding limit orbifold. Moreover, the affine length of the 
  edges of $\mho_{\Theta^{n_1,n_2,n_3}_{p,q,r}}$ is $n_1p, n_2q, n_3r$.
\end{thm}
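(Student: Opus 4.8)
The plan is to push the entire computation into the limit orbifold $X_o$ of the ATBD and to reproduce there the neck-stretching argument carried out for $\CP^2$ in \cite[Section~4]{Vi14}. By Definition~\ref{dfn: limorb}, $X_o$ is the toric orbifold whose moment polytope is the closed triangle underlying the ATBD; it has cyclic quotient singularities at its three corners, of orders $n_1p^2$, $n_2q^2$, $n_3r^2$ (the determinants $|\bu_i\wedge\bu_j|$ of the primitive edge vectors meeting there, as in Claim~\ref{clm: 2}), and by Claim~\ref{clm: 1} it is monotone with $H_2(X_o;\Q)$ of rank one. The torus $\Tpqr$ is identified with the monotone toric fiber $T$ of $X_o$. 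First I would slide every node of the ATBD toward the edges and neck-stretch along the boundary of a neighbourhood of the orbifold points \cite{EliGiHo10,CompSFT03}; in the limit each Maslov index $2$ holomorphic disk with boundary on $\Tpqr$ converges to a (possibly nodal) configuration of orbifold holomorphic curves in $X_o$ with boundary on $T$, exactly as in \cite[Section~4]{Vi14}.

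The key structural step is positivity of intersection against the three toric divisors $D_1,D_2,D_3$ lying over the edges. Here the triangular shape is \emph{essential}: the $D_i$ are the irreducible components of the anticanonical cycle, so for any disk class $\beta$ one has $\mu(\beta)=2\sum_i\beta\cdot D_i$ with each orbifold intersection number $\beta\cdot D_i\ge 0$ \cite{ChenRu02,Chen04}. A Maslov index $2$ disk therefore meets the anticanonical cycle with total multiplicity one; it is a basic orbifold disk running into a single divisor $D_i$, possibly passing through the adjacent corner singularity. I would then invoke the Cho--Poddar description and count of holomorphic disks in toric orbifolds \cite{ChoPo14} to classify these disks and to read off their boundary classes in $H_1(\Tpqr)=\Z^2$: the disks meeting the smooth locus of $D_i$ have boundary the primitive generator $\nu_i$ of the $i$-th ray of the fan of $X_o$ (with enumerative count one), while the disks passing through a corner (a nontrivial twisted sector) have boundaries that are lattice points lying on the edges of $\mathrm{conv}\{\nu_1,\nu_2,\nu_3\}$.

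Combining the two directions gives the statement. Positivity shows that every Maslov-$2$ class with nonzero count has boundary inside the triangle $\mathrm{conv}\{\nu_1,\nu_2,\nu_3\}$, while the smooth basic disks realise the three vertices $\nu_i$ with nonzero count; hence $\mho_{\Tpqr}=\mathrm{conv}\{\nu_1,\nu_2,\nu_3\}$, the convex hull of the primitive vectors generating the fan of $X_o$. The claimed affine lengths are then a lattice computation from the vertices: since the normals are lattice rotations of the edge directions, $|\nu_i\wedge\nu_j|=|\bu_i\wedge\bu_j|$, and together with the Markov type~I relation of Corollary~\ref{cor: MarkovI} this forces the edge $[\nu_i,\nu_j]$ (with $\{i,j,k\}=\{1,2,3\}$ and $w_1=p$, $w_2=q$, $w_3=r$) to have affine length $n_kw_k$, since $|\nu_i\wedge\nu_j|=n_kw_k^2$; this yields edge lengths $n_1p$, $n_2q$, $n_3r$.

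The step I expect to be the main obstacle is the orbifold disk analysis: one must show that neck-stretching loses no disks, that the limiting configurations really are the single basic disks predicted by positivity (no bubbling into several anticanonical components), and that the twisted-sector contributions of \cite{ChoPo14} keep all boundaries inside the triangle while the vertex classes survive with nonzero count. Controlling positivity of intersection at the orbifold corners is exactly what breaks down once the limit polytope is no longer a triangle, and this is both the technical heart of the proof and the reason the method is restricted to ATBDs of triangular shape.
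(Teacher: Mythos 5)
Your proposal follows essentially the same route as the paper's proof: neck-stretch around the nodes, compactify the top level of the resulting holomorphic building to a degenerate orbifold disk $u^\infty_+$ in the limit orbifold, use positivity of intersection of $[u^\infty_+]$ with the three toric divisors to confine all Maslov-2 boundaries to the triangle, and realise the three vertices by the Cho--Poddar basic disks away from the orbifold points. One point of emphasis: the paper's reason the triangular shape is essential is not merely that the $D_i$ form the anticanonical cycle, but that a triangular polytope forces $H_2$ of the limit orbifold to have rank one with $H\cdot H>0$ (Claim \ref{clm: 2}), so that a component of $u^\infty_+$ which \emph{is} a positive multiple of a divisor still contributes positively --- this is exactly the case your blanket assertion ``$\beta\cdot D_i\ge 0$ for every disk class'' needs to handle, and it is what fails for non-triangular diagrams. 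A second, minor gap is in the affine-length step: knowing $|\nu_i\wedge\nu_j|=n_kw_k^2$ does \emph{not} by itself force the edge $[\nu_i,\nu_j]$ to have affine length $n_kw_k$ (two primitive vectors with a given wedge can differ by a primitive vector); the paper instead writes the fan generators explicitly and factors, e.g.\ $\bv_1-\bv_3=n_2q\,(q,-y)$ with $(q,-y)$ primitive because the cut direction $(y,q)$ is, which yields the lengths $n_1p$, $n_2q$, $n_3r$ directly.
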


\begin{proof} The proof of the first part is totally analogous to the proof of
Theorem 1.1 \cite[Section~4]{Vi14}, so we will only sketch here. Denote by $M$
the del Pezzo surface and $\overline{M_+^\infty}$ the limit orbifold. 

We first consider contact submanifolds $V_i$'s bounding the symplectic
submanifols $M_{-i}$'s formed by the pre-image of an open sector of the ATBD
that encloses $n_i$ nodes, $i = 1,2,3$. We embed $M_{+} = M \setminus
(M_{-1}\cup M_{-2} \cup M_{-3})$ inside the limit orbifold. We pull back the
complex form from the orbifold to $M_+$ and extend it to $M$. We also pullback
the Maslov index 2 holomorphic discs $\alpha, \beta, \gamma$ 
\cite{ChoPo14}, which live in the complement of the orbifold points in the limit
orbifold. The boundary of their homology classes corresponds to the primitive
vectors generating the fan of $\overline{M_+^\infty}$, and they will give rise to the
vertices of $\mho_{\Theta^{n_1,n_2,n_3}_{p,q,r}}$. 

Given an pseudo-holomorphic disk $u$ with boundary on
$\Theta^{n_1,n_2,n_3}_{p,q,r}$, we look at its limit under neck-streching,
which is at the pseudo-holomorphic building \cite{CompSFT03, EliGiHo10}. The part of
the pseudo-holomorphic building lying in $M_+^\infty$, compactifyies to a degenerated
pdeudo-holomorphic disk $u^\infty_+$ in the limit orbifold $\overline{M_+^\infty}$, having 
the same boundary as $u$ upon identification of $\Theta^{n_1,n_2,n_3}_{p,q,r}$
as a fiber in the limit orbifold. 

We have that $[u^\infty_+]$ intersects positively each of the classes of the
pre-images $A,B,C$ of the edges of the limit orbifold. Indeed, consider a
component of $[u^\infty_+]$, either is not a multiple of $A$, and hence
intersects $A$ non-negatively \cite{ChenRu02,Chen04}, or it is a positive
multiple of $A$, which has positive self intersection, see Claim \ref{clm: 2}.
Similar for intersection with $B$ and $C$. 

Since each disk $\alpha, \beta$ or $\gamma$ intersects only one of the divisors 
$A,B,C$, and the plane of Maslov index 2 classes in the orbifold projects 
injectively to $H_1(\Theta^{n_1,n_2,n_3}_{p,q,r})$ under the boundary
map, we can conclude the first part of the Theorem.
 
For the second part, we use the notation in the description of $\Delta$ in
Section \ref{sec: Mutation}, see Figure \ref{fig: Delta}. The primitive vectors
for the fan of the limit orbifold are $\bv_3 = (0,1)$, $\bv_1 = (n_2q^2,
1-n_2yq)$, $\bv_2 = (n_1p^2, -n_1xp - 1)$, which are orthogonal to $\bu_3$,
$\bu_1$, $\bu_2$. Hence the affine length of the edges $\bv_1 - \bv_3 =
n_2q(q,-y)$ and $\bv_2 - \bv_3 = n_2p(p,-x)$ are respectively $n_2q$ and $n_1p$.
After applying a $SL(2,\Z)$ map, we can do the same analysis to conclude that
the affine length of the edge $\bv_1 - \bv_2$ is $n_3r$.
\end{proof}

\subsection{On $\BlI$} \label{subsec: thmb}

This section is devoted to prove Theorem \ref{thm: main}\ref{thm: b}. We start
with ATBDs of $\CP^2$ wiht two nodes and one corner (rank 0 elliptic
singularity) of length-type $(a^2,b^2,1)$, where $a^2 + b^2 + 1 = 3ab$. We
assume $a < b$, and scale the symplectic form so that the affine length of the
edges are $3$, $3a^2$, $3b^2$. We need to blow up so that the are of the
exceptional divisor $E$ is 1/3 of the area of the line. The area of the
anti-canonical divisor $3[\CP^1]$, represented by the preimage of the edges of
the ATBD \cite{Sy03}, is $3a^2 + 3b^2 + 1 = 9ab$. Hence we blow up so that
the symplectic area $\omega \cdot E = ab$. See Figure \ref{fig: ATBD1Blup}.
We note we have space to blowup, since $3a^2 - ab = ab' > 0$ and $3b^2 - ab = ba' >0$. 
We name the monotone torus $T_1(a,b)$.

We proceed as in the proof of Theorem \ref{thm: convex hull}, were we apply
neck-stretching ``degenerating'' the ATBD towards its limit orbifold. We will
use similar notation. 

Let's name $\alpha$, $\beta$, $\gamma$, $\varepsilon$ the classes of holomorphic
disks living in the complement of the orbifold points of the limit orbifold
$\overline{M^\infty_+}$. We consider a pseudo-holomorphic disk $u$ with
boundary on $T_1(a,b)$, and we look at the degenerate pseudo-holomorphic disk 
$u_+^\infty$ in the limit orbifold $\overline{M^\infty_+}$, which is the 
compactification of the top building of the neck-stretch limit.

We name $A$, $B$, $C$, the pre-image of the edges of the limit orbifold whose
symplectic area are respectively $3a^2 - ab$, $3b^2 - ab$, and $3$. We keep
calling $E$ the class of limit of the exceptional curve in the limit orbifold.
Say that $\alpha$ intesects $A$, $\beta$ intesects $B$, $\gamma$ intesects $C$
and $\varepsilon$ intesects $E$. 

\begin{prp}
  We have that $A \cdot A = \frac{a^2 - b^2}{b^2} < 0$, $B \cdot B = \frac{b^2 - a^2}{a^2} > 0$
  $C \cdot C = \frac{1}{a^2b^2} > 0$, $E \cdot E = -1 < 0$. 
\end{prp}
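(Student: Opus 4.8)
The plan is to carry out the entire computation in the rational orbifold homology $H_2(\overline{M^\infty_+};\Q)$ of the limit orbifold, reducing every self-intersection to the orbifold intersection pairing exactly as in the proof of Claim \ref{clm: 2}. Here $\overline{M^\infty_+}$ is the blowup of the weighted projective space $\CP(a^2,b^2,1)$ at the smooth corner where the two large edges meet, so $H_2(\overline{M^\infty_+};\Q)$ is two-dimensional; I would take as generators the hyperplane class $H$ pulled back from $\CP(a^2,b^2,1)$ and the exceptional class $E$. The three boundary divisors $D_A,D_B,D_C$ of $\CP(a^2,b^2,1)$ are the pre-images of the edges of areas $3a^2$, $3b^2$, $3$; their isotropy orders are the weights $a^2,b^2,1$, so $[D_A]=a^2H$, $[D_B]=b^2H$, $[D_C]=H$. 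Crucially, since $c=1$ in the Markov triple $(1,a,b)$, the corner $D_A\cap D_C$ opposite $C$ carries no isotropy, i.e. $D_A\cap D_B$ is a smooth point; this is precisely the rank $0$ corner of the ATBD at which we blow up.

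The next step is to pin down the pairing on $\langle H,E\rangle$. The orbifold point $D_A\cap D_C$ has degree $b^2$, so the orbifold intersection theory of \cite{ChenRu02,Chen04} invoked in Claim \ref{clm: 2} gives $[D_A]\cdot[D_C]=1/b^2$; together with $[D_A]=a^2H$ and $[D_C]=H$ this yields
\[
a^2\,(H\cdot H)=\tfrac{1}{b^2},\qquad\text{hence}\qquad H\cdot H=\tfrac{1}{a^2b^2},
\]
and the point $D_B\cap D_C$ of degree $a^2$ gives the same value as a consistency check. Because the center of the blowup is a smooth point, it is an ordinary symplectic blowup, so $E\cdot E=-1$ and $H\cdot E=0$; equivalently, as in Claim \ref{clm: 1}, the complement of the orbifold points embeds into $\BlI$, where $E$ is an honest $(-1)$-sphere and $H$ is represented by $C$, which avoids the center.

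Finally I would identify the edge classes: $C$ has area $3$ and does not pass through the blown-up corner, so $[C]=[D_C]=H$, while $A$ and $B$ are the proper transforms of $D_A,D_B$ through the multiplicity-one center, so $[A]=a^2H-E$ and $[B]=b^2H-E$. Substituting into the pairing gives
\[
C\cdot C=H\cdot H=\tfrac{1}{a^2b^2},\qquad E\cdot E=-1,
\]
\[
A\cdot A=a^4(H\cdot H)-1=\tfrac{a^2}{b^2}-1=\tfrac{a^2-b^2}{b^2},\qquad B\cdot B=b^4(H\cdot H)-1=\tfrac{b^2}{a^2}-1=\tfrac{b^2-a^2}{a^2},
\]
and the stated signs follow from $a<b$. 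The one genuinely delicate point—where I would spend most care—is the homological identification in the orbifold: one must verify that the blown-up corner has trivial isotropy (this is exactly where $c=1$ enters, and it is what licenses the clean relations $E\cdot E=-1$ and $[A]=a^2H-E$), and that $A,B$ are the two divisors adjacent to this smooth corner while $C$ is the one opposite to it.
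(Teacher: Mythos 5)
Your computation is correct and coincides with the paper's own (alternative) proof: the identifications $[A]=a^2H-E$, $[B]=b^2H-E$, $[C]=H$ are exactly the paper's relations $A+E=a^2C$, $B+E=b^2C$, and $H\cdot H=1/(a^2b^2)$ is obtained by the same orbifold-degree argument as in Claim \ref{clm: 2} (the paper's first-listed proof instead applies the local toric formula $D\cdot D = |\bw\wedge\bv|/(|\bv\wedge\bu||\bw\wedge\bu|)$ edge by edge, but it explicitly records your route as well). One typographical slip: the corner opposite $C$ is $D_A\cap D_B$, not $D_A\cap D_C$ --- you state this correctly in the following clause and correctly use the degree-$b^2$ point $D_A\cap D_C$ in the subsequent computation, so nothing is affected.
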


\begin{proof}
 Use the formula 
  $$ D\cdot D = \frac{|\bw \wedge \bv|}{|\bv \wedge \bu||\bw \wedge \bu|}, $$
 where $D$ is represented by an edge of the moment polytope of a tori orbifold with
 primitive vector $\bu$, and such that the primitive vectors of the adjacent edges are
 $\bv$ and $\bw$. We also have that $\bu$ points from the $\bv$-edge to the $\bw$-edge
 and $|\bv \wedge \bu| > 0$.
 
An alternative proof is to use that $A + E = a^2C$, $B + E = b^2 C$ and arguments similar
to Claim \ref{clm: 2}.  
\end{proof}

So the positivity of intersection argument given in the proof of Theorem \ref{thm: convex hull}
fails. Nonetheless we have:

\begin{lem} \label{lem: 1BlupIntersec}
 The intesrsections $E\cdot [u^\infty_+] \ge 0$, $B\cdot [u^\infty_+] \ge 0$ and 
 $C \cdot [u^\infty_+] \ge 0$. 
\end{lem}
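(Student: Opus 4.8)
The plan is to establish the three inequalities $E\cdot[u^\infty_+]\ge 0$, $B\cdot[u^\infty_+]\ge 0$, $C\cdot[u^\infty_+]\ge 0$ by the same positivity-of-intersection mechanism used in the proof of Theorem \ref{thm: convex hull}, restricting attention precisely to those divisors $E,B,C$ whose self-intersection is non-negative (as computed in the preceding Proposition: $E\cdot E=-1$, $B\cdot B>0$, $C\cdot C>0$). The obstruction in the general argument was only that $A\cdot A<0$, so positivity fails for $A$; but the dichotomy underlying the proof of Theorem \ref{thm: convex hull} still applies to the remaining three divisors.

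First I would decompose $[u^\infty_+]$ into its irreducible components in the limit orbifold $\overline{M^\infty_+}$. For each of $D\in\{E,B,C\}$, I would run the same dichotomy as in Theorem \ref{thm: convex hull}: a component of $[u^\infty_+]$ is either not supported on $D$, in which case positivity of intersection for (orbifold) pseudo-holomorphic curves \cite{ChenRu02,Chen04} gives a non-negative local contribution; or it is a positive multiple of the class of $D$, in which case its contribution to $D\cdot[u^\infty_+]$ is a positive multiple of $D\cdot D$. For $E$, $B$, and $C$ this self-intersection is non-negative (for $E$ one must be slightly careful since $E\cdot E=-1$, but the degenerate disk $u^\infty_+$ arises as the top level of a neck-stretch limit of honest disks with boundary on $T_1(a,b)$, so it cannot contain $E$ with negative total multiplicity — $E$ does not bound, and any component equal to $E$ enters with a non-negative coefficient dictated by the original disk $u$). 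Summing over components then yields $D\cdot[u^\infty_+]\ge 0$ for each of the three divisors.

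The cleanest way to handle $E$ rigorously is to use the linear relations $A+E=a^2C$ and $B+E=b^2C$ noted in the Proposition, which let me trade the potentially problematic divisor $E$ against the manifestly positive combinations involving $B$ and $C$; concretely, $E$ is homologous to $b^2C-B$ and to $a^2C-A$, and intersecting these against $[u^\infty_+]$ reduces the $E$-inequality to facts about $B$ and $C$ together with the fact that $[u^\infty_+]$ pairs non-negatively with the effective class $C$. I expect the main obstacle to be exactly the case of a component of $[u^\infty_+]$ whose support is the exceptional sphere $E$ itself: here one cannot invoke blind positivity of intersection because $E\cdot E<0$, and one must instead argue, as above, that such a component can only appear with the non-negative multiplicity inherited from the Maslov index 2 disk $u$ before degeneration, or equivalently use the substitution $E\sim b^2C-B$. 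Once that case is dispatched, the remaining verifications are routine applications of orbifold positivity of intersection exactly as in Theorem \ref{thm: convex hull}.
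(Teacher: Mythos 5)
Your treatment of $B$ and $C$ matches the paper's: a component contributing negatively would have to be a positive multiple of the corresponding divisor, which is excluded because $B\cdot B>0$ and $C\cdot C>0$ (and, for $B$, also by area). The gap is in your treatment of $E$. Your first argument --- that any component in the class of $E$ ``enters with a non-negative coefficient'' --- does not address the problem: components of a degenerate limit of holomorphic curves always occur with \emph{positive} multiplicity, and a component in class $mE$ with $m\ge 1$ contributes $mE\cdot E=-m<0$ to $E\cdot[u^\infty_+]$. What must be ruled out is precisely the existence of such a component; non-negativity of its multiplicity is the opposite of what is needed. Your fallback via $E\sim b^2C-B$ also fails: it gives $E\cdot[u^\infty_+]=b^2\,(C\cdot[u^\infty_+])-(B\cdot[u^\infty_+])$, a difference of two quantities you only know to be non-negative, which cannot be bounded below by $0$ without an upper bound on $B\cdot[u^\infty_+]$ that you have not established.

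The paper closes the $E$ case with an area (energy) argument instead: the blowup is sized so that $\omega^\infty_+\cdot E=ab$, which is exactly the symplectic area of the Maslov index $2$ class, so $\omega^\infty_+\cdot[u^\infty_+]=ab$ as well. Since $u^\infty_+$ always contains a main component with boundary on (the limit of) $T_1(a,b)$ of positive area, and every component has positive area, no component can lie in a class $mE$ with $m\ge1$ --- such a component alone would already exhaust the total area $ab$. Hence every component meets $E$ non-negatively by positivity of intersection, and $E\cdot[u^\infty_+]\ge0$. You should replace your $E$-step with this energy bound.
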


\begin{proof}
  That $C \cdot [u^\infty] \ge 0$ and $B\cdot [u^\infty_+] \ge 0$ follows as before, since
  a component of $u^\infty_+$ contributing negatively to $C \cdot [u^\infty]$ would 
  have to be a positive multiple of $C$, but $C \cdot C > 0$. Similar for $B$ (in fact
  no component could be a multiple of $B$ by area reasons).
  
  That $E\cdot [u^\infty_+] \ge 0$ follows from $\omega^\infty_+ \cdot E =
  \omega^\infty_+ \cdot [u^\infty_+] = ab$. Since $u^\infty_+$ has the `main
  component' with boundary on (the limit of) $T_1(a,b)$, and all components have
  positive symplectic area, we can't have a multiple of $E$.  
     
\end{proof}

\begin{figure}[h]
\begin{center} 
  
\centerline{\includegraphics[scale=0.5]{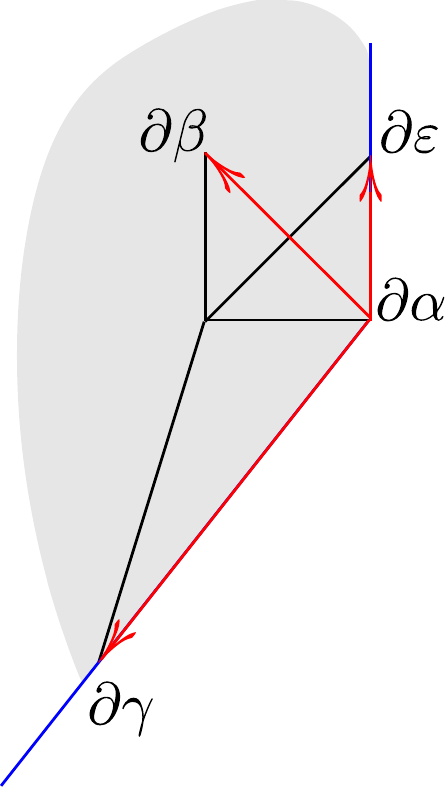}}
\caption{$\del \alpha$ is a corner of $\mho_{T_1(a,b)}$. }
\label{fig: Corner} 

\end{center} 
\end{figure}

\begin{lem} \label{lem: 1blupCorner}
  Upon identification of $T_1(a,b)$ with its limit in the limit orbifold, we have
  that $\del \alpha$ is a corner of the boundary Maslov-2 convex hull $\mho_{T_1(a,b)}$. 
\end{lem}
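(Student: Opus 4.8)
The plan is to exhibit a single linear functional on $H_1(T_1(a,b))$ that isolates $\del\alpha$ among the boundaries of all Maslov index $2$ classes with non-zero count; equivalently, a supporting covector of $\mho_{T_1(a,b)}$ whose restriction attains a strict maximum exactly at $\del\alpha$. Recall from the computation in Theorem \ref{thm: convex hull} that the four distinguished boundaries $\del\alpha,\del\beta,\del\gamma,\del\varepsilon$ are the primitive generators of the fan of the limit orbifold $\overline{M^\infty_+}$, i.e.\ the inward normals to the four edges $A,B,C,E$ of its (convex, quadrilateral) moment polytope. Being the edge-normals of a convex quadrilateral, they are cyclically ordered by direction and hence in strictly convex position, so $\del\alpha$ is a genuine vertex of their convex hull. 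First I would fix a covector $\xi\in H^1(T_1(a,b);\R)$ supporting this quadrilateral at $\del\alpha$, i.e.\ with $\xi(\del\alpha) > \xi(\del\beta)$, $\xi(\del\alpha) > \xi(\del\gamma)$, and $\xi(\del\alpha) > \xi(\del\varepsilon)$.

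Next I would bound an arbitrary competitor. Let $u'$ be a Maslov index $2$ class with non-zero count and $u^\infty_+$ its top building after neck-stretching, exactly as in the proof of Theorem \ref{thm: convex hull}. Writing $m_D = D\cdot[u^\infty_+]$ for $D\in\{A,B,C,E\}$, the toric boundary formula in the limit orbifold expresses the boundary class as a combination of the fan generators,
\begin{equation*}
\del u' \;=\; m_A\,\del\alpha + m_B\,\del\beta + m_C\,\del\gamma + m_E\,\del\varepsilon .
\end{equation*}
Lemma \ref{lem: 1BlupIntersec} supplies $m_B,m_C,m_E\ge 0$; and, since the Maslov index of a disk in the complement of the anticanonical divisor $A+B+C+E$ equals twice its intersection number with that divisor (the standard relation via a meromorphic volume form, cf.\ \cite{Au09}), we obtain the affine normalization $m_A+m_B+m_C+m_E = 1$. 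Crucially, we do \emph{not} know $m_A\ge 0$, precisely because $A\cdot A<0$ destroys positivity of intersection for $A$.

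The key step is to use the normalization to eliminate the uncontrolled coefficient $m_A$. Substituting $m_A = 1-m_B-m_C-m_E$ into $\xi(\del u')$ gives
\begin{equation*}
\xi(\del u') \;=\; \xi(\del\alpha) + m_B\big(\xi(\del\beta)-\xi(\del\alpha)\big) + m_C\big(\xi(\del\gamma)-\xi(\del\alpha)\big) + m_E\big(\xi(\del\varepsilon)-\xi(\del\alpha)\big).
\end{equation*}
Each parenthesis is strictly negative by the choice of $\xi$, and each of $m_B,m_C,m_E$ is non-negative, so $\xi(\del u')\le \xi(\del\alpha)$, with equality if and only if $m_B=m_C=m_E=0$, i.e.\ if and only if $\del u'=\del\alpha$. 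Since $\alpha$ is itself one of the basic holomorphic disks of \cite{ChoPo14}, it has non-zero count and $\del\alpha\in\mho_{T_1(a,b)}$; the inequality above then shows $\xi$ attains its maximum over $\mho_{T_1(a,b)}$ uniquely at $\del\alpha$, so $\del\alpha$ is a corner.

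The main obstacle is exactly the failure of positivity for $A$: the direct argument of Theorem \ref{thm: convex hull} would also want $m_A\ge0$ and would then realize all four of $\del\alpha,\del\beta,\del\gamma,\del\varepsilon$ as vertices, but here $A\cdot A<0$ forbids this and indeed prevents us from determining $\mho_{T_1(a,b)}$ completely. The fix trades the missing inequality $m_A\ge0$ for the affine normalization $m_A+m_B+m_C+m_E=1$, so that only the three genuine positivity estimates $m_B,m_C,m_E\ge0$ of Lemma \ref{lem: 1BlupIntersec} and the strict convexity of the fan generators at $\del\alpha$ are needed. The one point I would check with care is this normalization: one must confirm that the degenerate building $u^\infty_+$ retains total intersection number one with the anticanonical divisor $A+B+C+E$ of $\overline{M^\infty_+}$, which is the orbifold/neck-stretching bookkeeping already underlying Theorem \ref{thm: convex hull}, consistent with the monotone area identity $\omega^\infty_+\cdot[u^\infty_+]=ab$ from the proof of Lemma \ref{lem: 1BlupIntersec}.
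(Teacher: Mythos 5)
Your argument is correct and is essentially the paper's own proof in dual form: the paper writes every Maslov--2 class as $\alpha + k(\varepsilon-\alpha)+l(\beta-\alpha)+m(\gamma-\alpha)$ (which is exactly your decomposition with $m_A=1-k-l-m$), gets $k,l,m\ge 0$ from Lemma \ref{lem: 1BlupIntersec} precisely because positivity fails only for $A$, and concludes that $\del\alpha$ is the apex of the resulting cone; your supporting-functional $\xi$ just makes the last step explicit. The one point to repair is your justification that $\del\alpha$ is a vertex of the hull of the four fan generators: it is \emph{not} true in general that the edge-normals of a convex quadrilateral are in strictly convex position (e.g.\ $(5,1),(0,1),(-5,1),(0,-1)$ is a valid normal configuration with $(0,1)$ the midpoint of two others), so this needs the explicit generators $\del\alpha=(1,0)$, $\del\beta=(0,1)$, $\del\varepsilon=(1,1)$, $\del\gamma=(-a^2,-b^2)$ from Lemma \ref{lem: affine angle}, for which a functional such as $\xi = x-\epsilon y$ with $\epsilon>0$ small does the job.
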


\begin{proof}

  First we notice that the count of pseudo-holomporphic disk in the class $\alpha$ is 
  $\pm 1$, by the same arguments as in \cite[Lemma~4.11]{Vi14}. 
  
  The classes of Maslov index 2 (or symplect area $ab$) disks with boundary in (the limit of)
  $T_1(a,b)$ inside the limit orbifold must be of the form:
  
\begin{equation} \label{eq: class}
 \alpha + k(\varepsilon - \alpha) + l(\beta - \alpha) + m(\gamma - \alpha)  
\end{equation}
  
  By Lemma \ref{lem: 1BlupIntersec}, we see that the class $[u^\infty_+]$ must
  have $k,l,m \in \Z_{\ge0}$ in \eqref{eq: class}. Hence $\del \alpha$ is
  a corner of $\mho_{T_1(a,b)}$, see Figure \ref{fig: Corner}.
  
\end{proof}

\begin{lem} \label{lem: affine angle}
  
  The affine angle of the corner $\del \alpha$ in $\mho_{T_1(a,b)}$, ~i.~e., the
  norm of the determinant of the primitive vectors of the edges of the corner, is $b' = 3a - b$.
   
\end{lem}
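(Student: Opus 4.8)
The plan is to carry out the whole computation inside the fan of the limit orbifold $\overline{M^\infty_+}$, where (as in the proof of Theorem \ref{thm: convex hull}) the boundaries $\del\alpha,\del\beta,\del\gamma,\del\varepsilon \in H_1(T_1(a,b))$ are identified with the primitive generators of the rays. First I would write this fan down explicitly. The limit orbifold is the blow-up of $\CP(a^2,b^2,1)$ at its unique smooth corner (the one between the $a^2$- and $b^2$-edges), so the fan has the three rays $\bv_A=(1,0)$, $\bv_B=(0,1)$, $\bv_C=(-a^2,-b^2)$ of $\CP(a^2,b^2,1)$ — with $\bv_A,\bv_B$ spanning the smooth cone — together with the exceptional ray $\bv_E=\bv_A+\bv_B=(1,1)$, in cyclic order $\bv_A,\bv_E,\bv_B,\bv_C$.

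Next I would pin down which boundary class sits on which ray, using the homological relations $A+E=a^2C$ and $B+E=b^2C$ recorded just before the Lemma. Reading off the toric relations $\sum_i \langle \bv_i,m\rangle D_i=0$ for $m=(1,0)$ and $m=(0,1)$ yields precisely $D_A+D_E=a^2D_C$ and $D_B+D_E=b^2D_C$ once $\del\varepsilon=\bv_E$ is fixed as the exceptional ray; this forces $\del\alpha=\bv_A=(1,0)$, $\del\beta=\bv_B=(0,1)$, $\del\gamma=\bv_C=(-a^2,-b^2)$. All of this is determined only up to a global element of $SL(2,\Z)$ and an overall sign, but the affine angle is the norm of a $2\times2$ determinant and is insensitive to both, so I may work in this frame.

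By Lemma \ref{lem: 1blupCorner} every Maslov-2 class with nonzero count lies in $\del\alpha+\mathrm{Cone}(\del\varepsilon-\del\alpha,\ \del\beta-\del\alpha,\ \del\gamma-\del\alpha)$, so the two edges of $\mho_{T_1(a,b)}$ issuing from $\del\alpha$ run along the two extreme rays of this cone. Computing $\del\varepsilon-\del\alpha=(0,1)$, $\del\beta-\del\alpha=(-1,1)$, $\del\gamma-\del\alpha=(-a^2-1,-b^2)$, one checks that $(-1,1)$ is interior and the extreme rays are $(0,1)$ and the direction of $(-a^2-1,-b^2)$ (equivalently, the neighbours of $\bv_A$ in the cyclic order are $\bv_E$ and $\bv_C$). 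To extract the primitive edge vector I would use the Markov equation $a^2+b^2+1=3ab$, which rearranges to $a^2+1=b(3a-b)=bb'$ with $b'=3a-b$; a short gcd argument (a prime $\ell\mid b$ and $\ell\mid b'$ gives $\ell\mid 3a$ and $\ell\mid a^2+1$, forcing $\ell=3$ and $a^2\equiv 2 \bmod 3$, impossible) shows $\gcd(b,b')=1$, hence $\gcd(a^2+1,b^2)=b$. Thus $(-a^2-1,-b^2)=-b(b',b)$ has primitive direction $(b',b)$, and the affine angle at $\del\alpha$ is $\lvert\det\big((0,1),(b',b)\big)\rvert=b'=3a-b$.

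The step I expect to be the genuine content, as opposed to bookkeeping, is the identification in the second paragraph: matching the abstract classes $\del\alpha,\del\gamma,\del\varepsilon$ to concrete fan rays and confirming that $\bv_E$ and $\bv_C$ (rather than $\bv_B$) are the convex-hull neighbours of $\del\alpha$. Everything afterwards — the gcd computation and the determinant — is routine, and the positivity $b'=3a-b>0$ is automatic since $(1,a,b)$ mutates to the strictly smaller triple $(1,a,3a-b)$ in the Markov tree.
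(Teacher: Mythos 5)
Your computation is essentially the paper's own proof: the paper likewise identifies $\del\alpha=(1,0)$, $\del\beta=(0,1)$, $\del\varepsilon=(1,1)$, $\del\gamma=(-a^2,-b^2)$, uses $1+a^2=bb'$ to get $\del\gamma-\del\alpha=-b(b',b)$, and reads off the determinant of $(0,1)$ and $(b',b)$. Your version just makes explicit a few things the paper leaves implicit (the fan of the limit orbifold, the gcd argument that $(b',b)$ is primitive, and the check that $\del\beta-\del\alpha$ is interior to the cone), all of which check out.
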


\begin{proof}
  We identify $\del \alpha = (1,0)$, $\del \beta = (0,1)$, $\del \epsilon = 
  (1,1)$, $(-a^2,-b^2)$. Recall that $1 + a^2 = bb'$. Hence 
  $\del \gamma - \del \alpha = -b(b',b)$. Therefore the primitive vectors
  are $(0,1)$ and $(b',b)$.
\end{proof}

\begin{lem} \label{lem: cptconvexhull}
  The Maslov-2 convex hull $\mho_{T_1(a,b)}$ is compact.
\end{lem}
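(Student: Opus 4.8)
The plan is to show that the Newton polytope $\mho_{T_1(a,b)}$ cannot extend infinitely in any direction of $H_1(T_1(a,b))$, by establishing that the superpotential has only finitely many monomials with nonzero coefficient. Since any class of a Maslov index $2$ disk with boundary on (the limit of) $T_1(a,b)$ has the form \eqref{eq: class}, namely $\alpha + k(\varepsilon - \alpha) + l(\beta - \alpha) + m(\gamma - \alpha)$ with $k,l,m \in \Z_{\ge 0}$ by Lemma \ref{lem: 1BlupIntersec}, it suffices to bound $k,l,m$. The key observation is that all three bounded intersection numbers come from \emph{effective} area constraints, so I would first translate the nonnegativity of the coefficients $k,l,m$ into a finite range using symplectic area (equivalently, Maslov index) positivity of the building components in the limit orbifold.

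First I would record that every component of $u^\infty_+$ carries positive symplectic area, so the total area $ab$ of a Maslov $2$ disk bounds the number of nonconstant components and hence the total intersection with any effective divisor class. Concretely, the intersections $E \cdot [u^\infty_+]$, $B \cdot [u^\infty_+]$, $C \cdot [u^\infty_+]$ are all $\ge 0$ by Lemma \ref{lem: 1BlupIntersec}, and these intersections compute (up to the known self-intersection data $E\cdot E = -1$, $B \cdot B > 0$, $C\cdot C > 0$ from the Proposition) linear functions of $k,l,m$. The goal is to exhibit \emph{three} linearly independent nonnegative linear functionals of $(k,l,m)$ that are each bounded above by the fixed area $ab$; this pins $(k,l,m)$ into a compact region of $\Z_{\ge 0}^3$ and therefore forces finitely many classes.

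The main step is the bookkeeping: I would compute $E \cdot [u^\infty_+]$, $B\cdot[u^\infty_+]$, $C\cdot[u^\infty_+]$ in terms of $(k,l,m)$ using the relations among the disk classes and the divisor classes (for instance $A + E = a^2 C$, $B + E = b^2 C$ from the proof of the Proposition, together with $A\cdot A = (a^2-b^2)/b^2 < 0$). Because $A \cdot A < 0$ is the only negative self-intersection among $A,B,C$ and the only direction where the positivity-of-intersection argument failed, the three controlled quantities $E, B, C$ must already span enough of the space: intersecting $[u^\infty_+]$ with $B$ and $C$ (both of positive self-intersection, so honest effective bounds) and with $E$ (bounded by the exceptional area $ab$) gives three inequalities $0 \le k, l, m$ from below and upper bounds from the total area $ab$ from above. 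Since $\alpha, \beta, \gamma, \varepsilon$ all have the fixed Maslov index $2$ and hence fixed area, and each generator $(\varepsilon - \alpha), (\beta - \alpha), (\gamma - \alpha)$ contributes a strictly positive amount to at least one of these three controlled intersection numbers, each of $k, l, m$ is bounded above by $ab$ divided by a positive constant.

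The hard part will be verifying that the three linear functionals $k \mapsto E\cdot[u^\infty_+]$, $l \mapsto B\cdot[u^\infty_+]$, $m\mapsto C\cdot[u^\infty_+]$ are genuinely linearly independent and simultaneously bounded, rather than leaving one coordinate free to run to infinity along the negative-self-intersection direction $A$. In other words, I must rule out a family of classes that increases $l$ (or $k$) without bound while keeping all controlled intersections fixed; this is exactly where $A\cdot A < 0$ threatens the argument. I expect to close this by using the area identity $\omega^\infty_+ \cdot [u^\infty_+] = ab$ directly: since all four disk classes have equal positive area and the generators $\varepsilon-\alpha$, $\beta-\alpha$, $\gamma-\alpha$ each have \emph{zero} symplectic area (they are differences of Maslov $2$ classes), area alone does not bound $(k,l,m)$, so the bound must come entirely from the effective intersection inequalities of Lemma \ref{lem: 1BlupIntersec}. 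Thus the crux is showing these three inequalities define a bounded polytope in $(k,l,m)$-space, which I would verify by a direct determinant computation on the three functionals and conclude that $\mho_{T_1(a,b)}$, being the convex hull of the finite set of boundaries of such classes, is compact.
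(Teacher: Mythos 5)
There is a genuine gap, and it sits exactly where you flag "the hard part." The three inequalities $E\cdot[u^\infty_+]\ge 0$, $B\cdot[u^\infty_+]\ge 0$, $C\cdot[u^\infty_+]\ge 0$ from Lemma \ref{lem: 1BlupIntersec} carry no information beyond $k,l,m\ge 0$: since $\varepsilon$, $\beta$, $\gamma$ are the only basic disks meeting $E$, $B$, $C$ respectively, one has $E\cdot[u^\infty_+]=k\,(E\cdot\varepsilon)$, $B\cdot[u^\infty_+]=l\,(B\cdot\beta)$, $C\cdot[u^\infty_+]=m\,(C\cdot\gamma)$, so your three functionals are (positive multiples of) the coordinates $k,l,m$ themselves. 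The region they cut out is the full octant $\{k,l,m\ge 0\}$, which is unbounded; no determinant computation on these three functionals will change that. The upper bounds you hope to extract from the total area $ab$ do not materialize: as you yourself observe, $\varepsilon-\alpha$, $\beta-\alpha$, $\gamma-\alpha$ have zero area (and zero Maslov index), and an intersection number such as $E\cdot[u^\infty_+]$ is not controlled by area either, since a component can meet a divisor with high local multiplicity (e.g.\ high-order tangency) at no area cost. So the proposal, as written, proves only what Lemma \ref{lem: 1blupCorner} already used and never produces the needed bound.

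The missing idea --- and the entire content of the paper's proof --- is to control the fourth pairing $A\cdot[u^\infty_+]$, precisely the one for which positivity of intersection fails because $A\cdot A<0$. A component of the limit building can contribute negatively to $A\cdot[u^\infty_+]$ only if it is a (positive multiple of) $A$ itself; each such component carries symplectic area at least $\omega(A)=3a^2-ab=ab'>0$, while the total area available is $ab$. Hence there is a uniform $N_0$ bounding the number of such components, and therefore a uniform $N$ with $A\cdot[u^\infty_+]>-N$ for every limit disk. Since $A\cdot[u^\infty_+]=(1-k-l-m)(A\cdot\alpha)$ in the decomposition \eqref{eq: class}, this single lower bound gives $k+l+m<N+1$ (after rescaling by $A\cdot\alpha$), which together with $k,l,m\ge 0$ confines the classes to a finite set and yields compactness of $\mho_{T_1(a,b)}$. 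Your instinct to worry about "the negative-self-intersection direction $A$" was correct; the fix is not to avoid $A$ but to bound its pairing from below by an area-counting argument on the components of the building.
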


\begin{proof}
  By area reasons, there is a constant $N_0 \in \Z_{>0}$ such that $[u_+\infty]$ cannot
  have $N_0$ or more components in the class $A$, for all pseudo-holomorphic disks 
  $u$. Therefore, there is a constant $N \in \Z_{>0}$ such that $A \cdot[u_+\infty] 
  > -N$. So, $k + l + m < N +1$ in the decomposition \eqref{eq: class} of 
  $[u_+\infty]$, which implies the Lemma.
\end{proof}

Recall that two Maslov-2 convex hull are equivalent if they are related via
$SL(2,\Z)$. Since we have an infinite number of values of affine angles, the
number of equivalence classes for the Maslov-2 convex hulls $\mho_{T_1(a,b)}$s
cannot be finite. This finishes the proof of Theorem \ref{thm: main}\ref{thm: 
b}.

\subsection{Proof of Theorem \ref{thm: X-Sigma}} \label{subsec: X-Sigma}

Let $(X, \omega)$ be a del Pezzo surface and consider $\Tpqr \subset X$ the monotone
Lagrangian tori described in this Section \ref{sec: Proof}. If one applies nodal
slide along a segment $[A,B]$ inside the base of an ATF, the new ATF can be
chosen to be equal to the previous one outside a small neighbourhood of $[A,B]$.
Therefore, we may assume that, provided we apply nodal trade for all nodes
beforhand, over the edge of all the ATFs desbribed in Section \ref{sec: ATF} for
the del Pezzo surface $X$ lives the same symplectic torus $\Sigma$. Also,
all the monotone Lagrangian tori $\Tpqr$ live in $X \setminus \Sigma$.  

Let $V = \del X$. In a neighbourhood of $V$, there is a Liouville vector field
pointing outside $V$ for which the corresponding Reeb vector field $R \subset
\Gamma(TV)$ is so that $\sfrac{V}{R} \cong \Sigma$. Therefore we may attach the
positive symplectization of $V$ to $X \setminus \Sigma$, and see $\Tpqr \subset
(X \setminus \Sigma)\cup(V \times [0,+\infty))$ as a fiber of an ATF. It follows
from seeing $X \setminus \Sigma$ coming from Weinstein handle attachments to the
co-disk bundle $D^*_{\epsilon}\Tpqr$ along the boundary of Lagrangian disks,
that $\Tpqr$ are exact in $(X \setminus \Sigma)\cup(V \times [0,+\infty))$, see
Section \ref{subsec: STW}.

\begin{thm*}[\ref{thm: X-Sigma}]
  The tori $\Theta^{n_1,n_2,n_3}_{p,q,r} \subset (X \setminus \Sigma)\cup(V \times [0,+\infty))$
  belong to mutually different Hamiltonian isotopy classes.
\end{thm*}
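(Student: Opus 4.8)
The plan is to upgrade the symplectomorphism-type invariant used in the closed case to an invariant that survives in the exact, open setting $(X \setminus \Sigma)\cup(V \times [0,+\infty))$. The key observation is that the tori $\Theta^{n_1,n_2,n_3}_{p,q,r}$ are exact Lagrangians in the completed complement, and that the count of Maslov index 2 pseudo-holomorphic disks with boundary on $\Theta^{n_1,n_2,n_3}_{p,q,r}$ — hence the boundary Maslov-2 convex hull $\mho_{\Theta^{n_1,n_2,n_3}_{p,q,r}}$ of Theorem~\ref{thm: convex hull} — can be computed using only disks that stay away from $\Sigma$. First I would argue that a Hamiltonian isotopy in $(X \setminus \Sigma)\cup(V \times [0,+\infty))$ preserves the relevant disk counts: since $\Sigma$ lives over the edge of every ATBD and a neighbourhood of $\Sigma$ is kept fixed under all the mutations, the Maslov-2 disks contributing to $\mho_{\Theta^{n_1,n_2,n_3}_{p,q,r}}$ are precisely the degenerated disks $u_+^\infty$ in the limit orbifold, whose boundaries are the primitive fan vectors and which by construction do not touch the anti-canonical divisor $\Sigma$.

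The main technical point is to show that these disk counts are invariants of the Hamiltonian isotopy class \emph{in the open completed manifold}, not merely in the closed del Pezzo $X$. The plan here is to invoke the SFT/neck-stretching machinery already set up in the proof of Theorem~\ref{thm: convex hull}: the contact boundary $V$ with $\sfrac{V}{R} \cong \Sigma$ lets us compactify the top building of any neck-stretch limit inside the limit orbifold, and a Hamiltonian isotopy of the completed complement is supported away from the added symplectization end (or can be pushed to be so), so it acts trivially on the compactified moduli spaces. Consequently $\mho_{\Theta^{n_1,n_2,n_3}_{p,q,r}}$, together with its labelled edge lengths $n_1p, n_2q, n_3r$, is an invariant of the Hamiltonian isotopy class of $\Theta^{n_1,n_2,n_3}_{p,q,r}$ in $(X \setminus \Sigma)\cup(V \times [0,+\infty))$. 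The exactness of the tori ensures there is no bubbling off of the constant-area or index-0 configurations that could otherwise spoil the regularity and invariance of the count.

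Granting this, the distinctness follows formally from Theorem~\ref{thm: convex hull}: by Theorem~\ref{thm: MarkovI} the triples $(p,q,r)$ range over solutions of the Markov type I equation~\eqref{eq: MarkovI}, and by Proposition~\ref{prp: KaNo1} there are infinitely many such triples, giving convex hulls $\mho_{\Theta^{n_1,n_2,n_3}_{p,q,r}}$ with unboundedly many distinct edge-length data $(n_1p, n_2q, n_3r)$. Two such hulls that are not related by $SL(2,\Z)$ cannot come from Hamiltonian-isotopic tori, so the tori $\Theta^{n_1,n_2,n_3}_{p,q,r}$ fall into infinitely many Hamiltonian isotopy classes already inside $X \setminus \Sigma$, and this persists after attaching the symplectization end.

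The hard part will be justifying that the Maslov-2 count remains a well-defined invariant once the target is noncompact: I must rule out escape of disks into the positive end $V \times [0,+\infty)$ and confirm that the relevant moduli spaces remain compact. The cleanest route is a maximum-principle / monotonicity argument in the symplectization — disks with boundary on a fixed compact Lagrangian $\Theta^{n_1,n_2,n_3}_{p,q,r}$ and of bounded energy cannot run off to infinity along the Liouville direction, since the primitive of $\omega$ grows along the end and the $J$-holomorphic boundary condition combined with exactness of the torus forbids components with boundary pushed into the cylindrical region. Once this confinement is established, the computation of $\mho_{\Theta^{n_1,n_2,n_3}_{p,q,r}}$ reduces verbatim to the one in Theorem~\ref{thm: convex hull}, and the theorem follows.
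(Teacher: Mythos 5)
There is a genuine gap, and it sits at the very centre of your plan. You propose to show that the boundary Maslov-2 convex hull ``can be computed using only disks that stay away from $\Sigma$'' and hence descends to an invariant of the Hamiltonian isotopy class in $(X \setminus \Sigma)\cup(V \times [0,+\infty))$. This cannot work: $\Sigma$ represents the Poincar\'e dual of $\mathrm{c}_1$, and as a cycle in $X \setminus \Tpqr$ it is Poincar\'e dual to $\mu/2 \in H^2(X,\Tpqr)$, so \emph{every} Maslov index $2$ disk contributing to $\mho_{\Tpqr}$ meets $\Sigma$ with intersection number exactly $1$. In the complement the tori are \emph{exact} Lagrangians in an exact symplectic manifold, so by Stokes' theorem they bound no nonconstant holomorphic disks at all; your maximum-principle confinement argument is consistent with this but only confirms that the disk count in the open manifold is empty for every torus, hence distinguishes nothing. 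An honest invariant intrinsic to the completed complement would have to be of a different nature (e.g.\ counts of punctured disks asymptotic to Reeb orbits of $V$, which is closer to Tonkonog's approach mentioned in the introduction), and you have not set that up.

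The paper avoids this entirely by going back to a \emph{closed} monotone manifold. A Hamiltonian isotopy between two of the tori can be cut off to be the identity outside $(X \setminus \Sigma)\cup(V \times [0,C))$ for some finite $C$; collapsing the Reeb orbits in $V \times \{C\}$ produces the del Pezzo $(X,\omega_C)$, i.e.\ $X$ inflated along $\Sigma$, to which the truncated isotopy extends. Because $\Sigma$ is dual to $\mu/2$, each $\Tpqr$ remains monotone in $(X,\omega_C)$ and is again the monotone fiber of the correspondingly rescaled ATBD, so Theorem \ref{thm: convex hull} applies verbatim in $(X,\omega_C)$ and the tori are distinguished there. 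That capping-off step is the missing idea in your proposal; without it, the invariance argument you sketch for the noncompact target has nothing nontrivial to be invariant.
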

       
 \begin{proof}
  
  If there were a Hamiltonian isotopy between two of these tori, it could be
  made to be identity outside $(X \setminus \Sigma)\cup(V \times [0,C))$, for
  some constant $C$. Therefore it is enough to prove that the tori are not
  Hamiltonian isotopic in the del Pezzo surface $(X, \omega_C)$, obtained by
  collapsing the Reeb vector field inside $V \times \{C \}$. In other words, $(X,
  \omega_C)$ is obtained by inflating $X$ along $\Sigma$ by a factor of $C$. We
  note that $\Sigma \subset X$ not only represents the Poincar\'e dual to
  $\mathrm{c}_1$, but as a cycle in $X \setminus \Tpqr$, it represents the
  Poincar\'e dual to half of the Maslov class $\mu/2 \in H^2(X,\Tpqr)$. Looking
  at the ATF in $(X \setminus \Sigma)\cup(V \times [0,C)$ we see that, not only
  $\Tpqr \subset (X, \omega_C)$ remains monotone, it is the monotone fiber of an
  ATBD of $(X, \omega_C)$, which is the corresponding multiple of the initial
  ATBD of $(X, \omega)$. Hence the tori $\Tpqr$ are mutually non-Hamiltonian
  isotopic in $(X, \omega_C)$.

 \end{proof}
 
 \begin{rmk} Note that taking the complement of the divisor that is Poincar\'e
 dual to a multiple of the Maslov class for all tori was essential. All
 Lagrangian tori constructed in $\CP^2$ can be shown to live in the complement
 of a line. But Dimitroglou Rizzel's classification of tori in $\C^2$
 \cite{DR16} shows that there are only Clifford and Chekanov monotone Lagrangian
 tori in $\C^2$, up to Hamiltonian isotopy. \end{rmk}

\section{Relating to other works} \label{sec: relating}

\subsection{Shende-Treumann-Williams} \label{subsec: STW}

Consider a surface $S$ and a closed circle $\sigma \subset S$. Let $M = D^*S$
the co-disk bundle of $S$ (with respect to some auxiliary metric on $S$). We can
lift $\sigma$ to a Legendrian $\sigma' \subset \del M$, by considering over each
point $s \in \sigma$ the co-vector that vanishes on the tangent vectors of
$\sigma$ at $s$. We can then attach a Weinstein handle along $\sigma'$ obtaining
a new Liouville manifold $M_\sigma$. Shende-Treumann-Williams show that there is
a way to ``mutate'' the surface $S$ by sliding it along the Lagrangian core
$L_{\sigma'}$ of the Weinstein handle, obtaining a new exact Lagrangian
$S^\sigma$. We also note that $M_\sigma$ is homotopic equivalent to $S \cup
L_\sigma$, where $L_\sigma$ is a Lagrangian disk with boundary $\sigma \subset
S$ given by the continuation of the Lagrangian core $L_{\sigma'}$ where we
shrink the length of the co-vectors of $\sigma'$ to zero.


This idea can be generalized for any number of cycles $\sigma_1, \dots,
\sigma_n$ in $S$, where we obtain a Lioville manifold $M_{\sigma_1, \dots,
\sigma_n}$, with a Lagrangian skeleton given by $S \cup L_{\sigma_1}\cdots \cup
L_{\sigma_n}$, where $L_{\sigma_i}$ is a Lagrangian disk with boundary on
$\sigma_i$. Also, for any word $w$ on $\sigma_1, \dots, \sigma_n$, one can
obatain a surface $S^w$ mutated along Lagrangian disks according to the word
$w$. Moreover, Shende-Treumann-Williams show how to also mutate the Lagrangian
disks, to obtain Lagrangians disks $L^w_{\sigma^w_i}$'s with boundary on $S^w$ so
that $M_{\sigma_1, \dots, \sigma_n}$ has $S^w \cup L^w_{\sigma^w_1}\cdots \cup
L^w_{\sigma^w_n}$ as a Lagrangian skeleton. They also show how these mutations can
be encoded using cluster algebra.

\begin{qu}
  Do the $S^w$'s give infinitely many Hamiltonian isotopy classes
  in $M_{\sigma_1, \dots, \sigma_n}$ among exact Lagrangians?
\end{qu}

\begin{figure}[h]
\begin{center} 
  
\centerline{\includegraphics[scale=0.6]{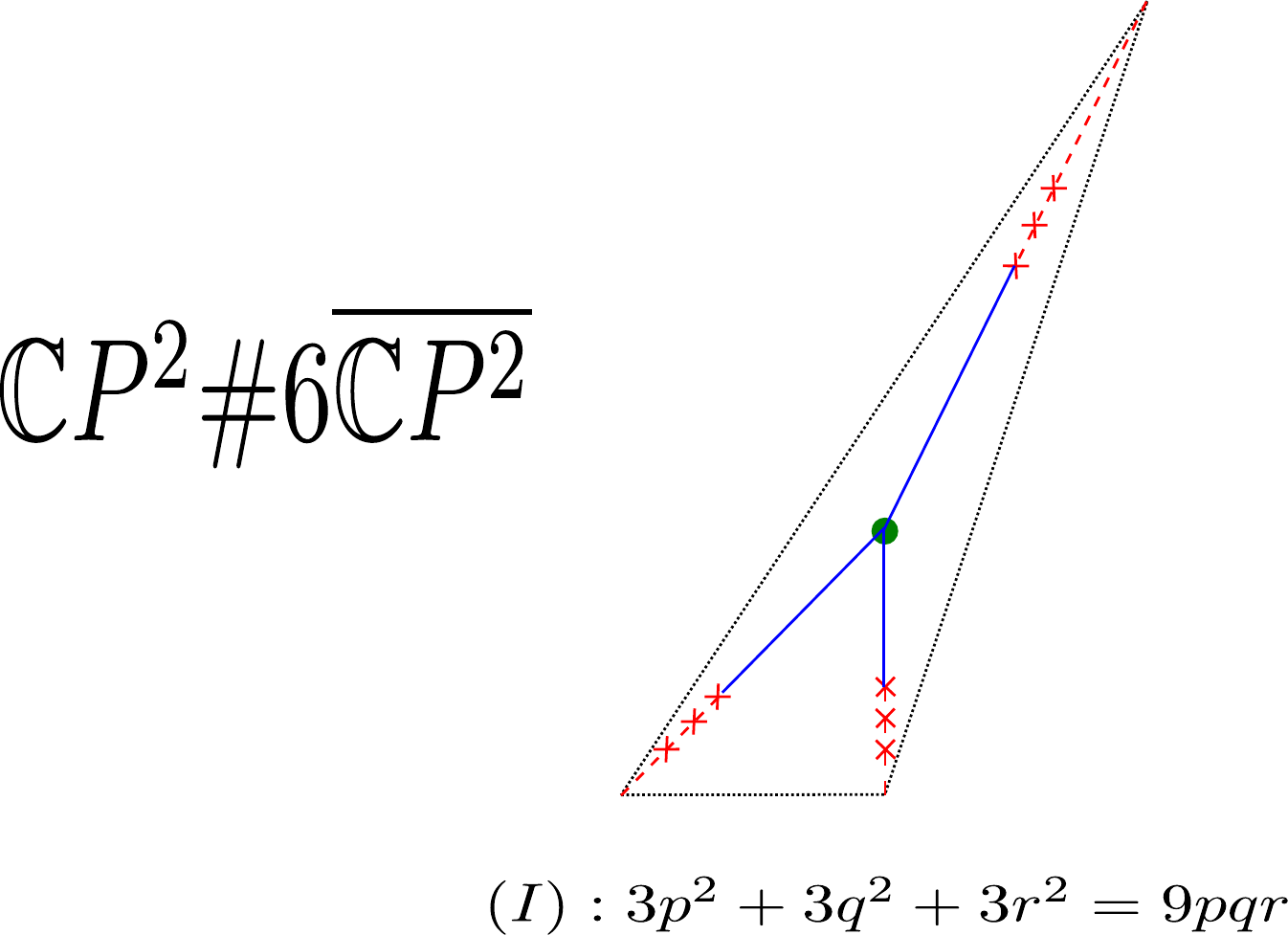}}
\caption{The complement of an anti-canonical surface in $\BlVI$ as a result of attaching nine
Lagrangian disks to the central fiber.}
\label{fig: 6BlupWeinHand} 

\end{center} 
\end{figure}

When $S$ is a torus $T$, this is precisely what is happening in the complement
of the anti-canonical surface $\Sigma$ living over the edge of the base of the
monotone ATFs. In fact in \cite{ToVi16}, we show that if the cycles $\sigma_i$'s
are taken to be periodic orbits of the self torus action of $T$, then we obtain
an ATF on $M_{\sigma_1, \dots, \sigma_n}$. Moreover, the ATF can be represented
by an ATBD whose cuts points towards the image of $T$ and are in the direction
of the cycles $\sigma$ via the identification $H_1(T,\R)$ with $\R^2 \supset$
ATBD. The Lagrangian disks live over the segment uniting $T$ with the cuts, see
Figure \ref{fig: 6BlupWeinHand}. In that way, the torus $T^w$ mutated with
respect to a word $w$ on $\sigma_1, \dots, \sigma_n$ corresponds to sliding the
nodes over the central fiber according to $w$. Also, if each time we slide a
node through the central fiber we perform a mutation on the ATBD, the mutated
Lagrangians $L^w_{\sigma^w_i}$'s can be read from the cuts of the mutated ATBD.

Figure \ref{fig: 6BlupWeinHand} illustrates $\BlVI \setminus \Sigma$, where we
have nine Lagrangian disks living over the segments connecting the central fiber
to each of the nodes of the ATBD. Theorem \ref{thm: X-Sigma} shows that total
mutations give rise to tori $\Theta^{3,3,3}_{p,q,r}$ living in mutually distinct
Hamiltonian isotopy classes.

\subsection{Keating}

In \cite[Proposition~5.21]{Ke15_1}, Keating shows how the Milnor fibers

\begin{eqnarray*} \label{eqn: Milnor Fibers}
 \mathcal{T}_{3,3,3} = \{x^3 + y^3 + z^3 + 1 = 0 \}; \\
 \mathcal{T}_{2,4,4} = \{x^2 + y^4 + z^4 + 1 = 0 \}; \\
 \mathcal{T}_{2,3,6} = \{x^2 + y^3 + z^6 + 1 = 0 \};  
\end{eqnarray*}
compactify respectivelly to the del Pezzo surfaces $\BlVI$, $\BlVII$, $\BlVIII$. 

Also, in \cite[Section~7.4]{Ke15_2}, Keating describes how the Milnor fibers
$\mathcal{T}_{p,q,r}$ of $x^p + y^q + z^r + axyz$ can be obtained by attaching
$p,q,r$ Weinstein handles to $D^*T$ along Legendrian lifts of three circles,
mutually intersecting at one point. By our discussion on the precious section,
we see the compactifications described in \cite[Proposition~5.21]{Ke15_1} depicted
in Figures \ref{fig: 6Blup}$(B_2)$, \ref{fig: 7Blup}$(B_2)$, \ref{fig: 8Blup}$(C_2)$.   

As a consequence of Theorem \ref{thm: X-Sigma}, we have that there are 
infinitely many exact Lagrangian tori in $\mathcal{T}_{3,3,3}, \mathcal{T}_{2,4,4},
 \mathcal{T}_{2,3,6}$.

\subsection{Karpov-Nogin and Hacking-Prokhorov } 
\label{subsec: KaNoHP}

There seems to be a relation between $m$-block collection of sheaves
\cite{KaNo98} on a del Pezzo surface and its ATBDs. By the Theorem in
\cite[Section~3]{KaNo98}, a $3$-block collection on a degree $d$ del Pezzo
suface containing exceptional collections with $n_1$,$n_2$,$n_3$ sheaves of
ranks $p,q,r$, satisfy the Markov type one equation \eqref{eq: MarkovI}. 

\begin{qu}
 
 Suppose we have an ATBD with node type $((n_1,p_1), \cdots, (n_m,p_m))$ of a del
 Pezzo surface. Is there an $m$-block collection $(\mathcal{E}_1, \cdots,
 \mathcal{E}_m)$, so that the exeptional colection $\mathcal{E}_i$ contains
 $n_i$ sheaves of rank $p_i$?
   
\end{qu}

There is also a relation between $\Q$-Gorenstein smoothing \cite{HaPr10} and
ATBD on the smooth surface. In \cite[Theorem~1.2]{HaPr10}, it is shown that the
weighted projective planes that admits a $\Q$-Gorenstein smoothing are precisely
the ones given by the limit orbifolds of the ATBDs obtianed by total mutation of
the ones in Figure \ref{fig: triangshape1}. It seems that pushing the nodes 
towards the edges of an ATBD corresponds to degenerating the del Pezzo surface
to the limit orbifold.

\begin{qu}
  Do monotone Lagrangians of a del Pezzo know about its degenerations? In other 
  words, do the limit orbifold of an ATBD has a $\Q$-Gorenstein smoothing to
  the corresponding del Pezzo?
\end{qu}

\subsection{FOOO and Wu}

The singular fibration of $\PxP$ described in \cite{FO312} can be thought as a
degeneration of the ATBD $(A_3)$ of Figure \ref{fig: PxP} where both nodes
aproach the edge, but instead of degenerating to an orbifold point, a Lagrangian
sphere that lives between the nodes survive, see \cite[Remark~3.1]{ToVi15}.
Similarly, the ATBD of $\CP^2$ with limit orbifold $\CP(1,1,4)$, can be thought
to degenerate to the singular fibration described in \cite{Wu15}, where instead
of an orbifold point we have a Lagrangian $\R P^2$. 

A way to see this degenerations is using the auxiliary Lefschetz fibration which
compactifies $(x,y) \in \C^2 \mapsto xy \in \C$ to $\PxP \backslash \CP^2$ to
get ATFs, as in \cite{Au07,Au09}. A (possibly singular) Lagrangian torus is the
union of orbits of the $S^1$-action $e^{i\phi}\cdot(x,y) = (e^{i\phi}x,
e^{-i\phi}y)$ with the same moment image, living over a circle in the base. A
Lagrangian $S^2 \backslash \R P^2$ lives over the segment $\R_{\ge 0}\cup \{
\infty\}$ and is formed by orbits of the $S^1$-action with moment image $0$.
Consider a continuous family of foliations $\mathcal{F}_t$, $t \in
[0,\epsilon)$, of the base $\CP^1$ so that: for $t \ne 0$, $\mathcal{F}_t$ is
formed by circles degenerating to the points $-1$ and $1$; and $\mathcal{F}_0$ is
formed by circles degenerating to the point $-1$ and the segment $\R_{\ge 0}\cup
\{ \infty\}$. The ATF of $\PxP \backslash \CP^2$ is formed by Lagrangian tori
living over some folliation $\mathcal{F}_\delta$ for a small $\delta$. The
singular fibration described in \cite{FO312} $\backslash$ \cite{Wu15} can be
interpreted as a singular Lagrangian fibration, where each Lagrantian is given
by the orbits of the $S^1$-action with moment image $\mu$ living over each leave
of the folliation $\mathcal{F}_\mu$, if $\mu \le \delta$, and living ove the
fibration $\mathcal{F}_\delta$ for $\mu \ge \delta$. So, for $\mu = 0$, we
get a family of Lagrangian tori, projecting to the circles of $\mathcal{F}_0$,
degenerating to $S^1$ at one end and to a Lagrangian $S^2 \backslash \R P^2$ at
the other.

The point is: in \cite{FO312} it is shown that there is a continuum of
non-displaceable fibers of the singular fibration described. It follows that
there is a continuum of non-displaceable fibers on the ATBD $(A_3)$ of Figure
\ref{fig: PxP}. The non-displaceability of the analougous fibers for the $\CP^2$
case is an open question. A more general question is:

\begin{qu}
  Which of the ATBDs described in this paper have a continuum of 
  non-displaceable fibers?
\end{qu}

\bibliographystyle{alphanum}
\bibliography{SympRefs}
  
\end{document}